\documentclass[11pt]{amsart}
\usepackage{amssymb,latexsym}
\input xypic
  \xyoption{all}
\usepackage{amsmath, amsthm, amsfonts}

\usepackage[english]{babel}
\usepackage[latin1]{inputenc}
\usepackage[T1]{fontenc}
\usepackage[colorlinks,linkcolor=black,citecolor=black,hyperindex,dvipdfm]{hyperref}

\usepackage{graphicx}
\usepackage{diagbox,amssymb}
\usepackage{verbatim,enumerate}
\usepackage{xypic}

\newtheorem{thm}{Theorem}[section]
\newtheorem{prop}[thm]{Proposition}
\newtheorem{cor}[thm]{Corollary}

\newtheorem{lem}[thm]{Lemma}
{\theoremstyle{definition}

\newtheorem{rem}[thm]{Remark}}

\newcommand{\rb}{\mathbb{R}}

   \newcommand{\ba}{\begin{eqnarray}}
   \newcommand{\na}{\end{eqnarray}}
   \newcommand{\ban}{\begin{eqnarray*}}
   \newcommand{\nan}{\end{eqnarray*}}



\newcommand{\ind}{\operatorname{Ind}}

\newcommand{\cl}{\mathcal{C}}
\newcommand{\jl}{\mathcal{J}}

\newcommand{\ml}{\mathcal{M}}

\newcommand{\fl}{\mathcal{F}}
\newcommand{\zl}{\mathcal{Z}}
\newcommand{\ol}{\mathcal{O}}
\newcommand{\pl}{\mathcal{P}}

\newcommand{~}{\quad}
\newcommand{\cb}{\mathbb{C}}

\newcommand{\zb}{\mathbb{Z}}
\newcommand{\pb}{\mathbb{P}}
\newcommand{\bb}{\mathbb{B}}

\newcommand{\glt}{\geqslant}
\newcommand{\llt}{\leqslant}
\newcommand{\undl}{\underline}
\newcommand{\vep}{\varepsilon}




\begin{document}

\title[  Welschinger invariants of Blow-ups]
{ Welschinger invariants of Blow-ups of symplectic 4-manifolds}

\author{Yanqiao Ding$^*$ \& Jianxun Hu$^{**}$  }
\address{Department of Mathematics\\ Sun Yat-Sen University\\
                       Guangzhou, 510275 \\ P. R. China }
\email{dingyq6@mail2.sysu.edu.cn}
\thanks{${}^*$ Current address: Department of Mathematics, Zhengzhou University,
                 Zhengzhou, 450001, P. R. China}

\address{Department of Mathematics\\ Sun Yat-Sen University\\
                       Guangzhou, 510275 \\ P. R. China}
\email{stsjxhu@mail.sysu.edu.cn}
\thanks{${}^{**}$Partially supported by the NSFC Grant 11371381 and 11521101}

\subjclass[2010]{Primary 14P05, 14N10 ; Secondary 53D05,14P25}
\keywords{Real symplectic blow-up, Welschinger invariants,
Real enumerative geometry}

\begin{abstract}
Using the degeneration technique, one studies the behavior of Welschinger invariants under the blow-up, and obtains some
blow-up formulae of Welschinger invariants. One also analyses the variation of Welschinger invariants when replacing a pair
of real points in the real configuration by a pair of conjugated points, and reproves  Welschinger's wall crossing formula.
\end{abstract}

\date{\today}
\maketitle


\section{Introduction}

Traditional enumerative geometry asks certain question to which the expected answer is a number:
for instance, the number of lines incident with two points in the plane, or the number of twisted
cubic curves on a quintic 3-fold. For last two decades, the complex enumerative geometry of curves in algebraic varieties
has taken a new direction with the appearance of Gromov-Witten invariants and quantum cohomology. The core
of Gromov-Witten invariants is so called ``counting the numbers of rational curves". On real enumerative
geometry side, one expected a real version of Gromov-Witten invariants for long time. In 2005, Welschinger
\cite{wel2005a} first discovered such an invariant in dimensions 4 and 6, which was called the Welschinger invariant and revolutionized
the real enumerative geometry. Recently it was partially extended to higher dimensions, higher genera and descendant type,
see \cite{GZ, shustin2015, shustin2015a} and the references therein for the details. Itenberg-Kharlamov-Shustin \cite{iks2014} also extended the algebraic
definition of Welschinger invariants to all del Pezzo surfaces
and proved the invariance under deformation in algebraic setting.

After the Welschinger invariants were well defined for real symplectic 4-manifolds, the focus of the research on Welschinger invariants turned into
its computation for some manifolds and the understanding of its global structure.
Itenberg-Kharlamov-Shustin \cite{iks2003,iks2004,iks2007,iks2009,iks2013c,iks2013b} systematically studied the
Welschinger invariants of del Pezzo surfaces, including the lower  bounds
of the invariants, the logarithmic equivalence of Welschinger and Gromov-Witten invariants, positivity,
and Caporaso-Harris type formula for Welschinger invariants.
Brugall\'e-Mikhalkin \cite{bm2008,bm2007} provided a method to compute
Welschinger invariants via floor diagram. Using their method, Arroyo-Brugall\'{e}-de Medrano \cite{abl2011}
computed the Welschinger invariants in the projective plane. Based on the open analogues of Kontsevich-Manin axioms
and WDVV equation, Horev-Solomon \cite{hs2012} gave a recursive formula of Welschinger invariants of real
blow up of the projective plane .

Using the degeneration technique, Itenberg-Kharlamov-Shustin \cite{iks2013a}
studied the positivity and asymptotics of Welschinger invariants of
real del Pezzo surfaces of degree $\geq 2$,
and obtained some new real Caporaso-Harris type formulae and real analogues of
Abramovich-Bertram-Vakil formula. In \cite{bp2013,bp2014},  Brugall\'{e}-Puignau
applied the real version of symplectic sum formula to obtain a real version of
Abramovich-Bertram-Vakil formula in the symplectic setting. Combining their formula with degeneration formula
and the technique of floor diagrams relative to a conic, E. Brugall\'e \cite{bru2015} computed the Gromov-Witten invariants and Welschinger invariants of
some del Pezzo surfaces.

Another important issues in the study of Welschinger invariants are how to understand the behavior of Welschinger invariants
under geometric transformations and how to apply Welschinger invariants to investigate the geometry and topology of the underlying manifolds.
In \cite{bru2015,brugalle2016,bp2014,iks2013a}, the authors used the degeneration technique to study the properties of Welschinger invariants.
In particular, by locally modifying the real structure, E. Brugall\'{e} \cite{brugalle2016} proved very simple relations among Welschinger
invariants of real symplectic 4-manifolds differing by a real surgery along a real Lagrangian sphere. In fact, his real surgery is a kind of real
symplectic blow-up along a real Lagrangian sphere, see Section 4 of \cite{brugalle2016} for the details.

From the research of algebraic geometry and Gromov-Witten theory \cite{lr2001,li2002} ,
we know that the invariants obtained from the moduli spaces always have close
relationship with the birational transformation.
As well known, blow-up is the basic birational transformation.
The moduli space of genus zero curves is well behaved under blow-up.
The absolute value of Welschinger invariant provided a lower bound for the number of real pseudo-holomorphic
curves passing through a particular real configuration and representing a degree, whereas an upper bound is given by the
corresponding genus zero Gromov-Witten invariant.
Inspired by the works on Gromov-Witten invariants \cite{brugalle2016,hu2000, hu2001, hlr2008},
we will study the behavior of Welschinger invariants under real symplecitc blow-ups in this paper.

A \textit{real symplectic $4$-manifold} $(X, \omega, \tau)$, denoted by $X_\rb$, is a symplectic 4-manifold $(X,\omega)$
with an involution $\tau$ on $X$ such that $\tau^*\omega=-\omega$.
The fixed point set of $\tau$, denoted by $\rb X$, is called the \textit{real part} of $X$.
$\rb X$ is either empty or a smooth lagrangian submanifold of $(X,\omega)$.
An $\omega$-tamed almost complex structure $J$  is called $\tau$-compatible
if $\tau$ is $J$-antiholomorphic.
The space of all $\tau$-compatible almost complex structures on $X$ is denoted by $\rb\jl_\omega$.
Let $c_1(X)$ be the first Chern class of the symplectic manifold $(X,\omega)$.
Let $d\in H_2(X;\zb)$ be a homology class satisfying
$c_1(X)\cdot d>0$ and $\tau_*d=-d$.
 Let $L$ be a connected component of $\rb X$.
Assume $\undl x\subset X$ is a real configuration consisting of $r$ real points in $L$
and $s$ pairs of $\tau$-conjugated points in $X\backslash\rb X$,
where $r+2s=c_1(X)\cdot d-1$.
Fix a $\tau$-invariant class $F\in H_2(X\setminus L;\zb/2\zb)$.
Denote by $W_{X_\rb,L,F}(d,s)$ the Welschinger invariants.
For the simplicity of notations, we assume that $\rb X$ is connected and $F=0$.
In this situation, we denote $W_{X_\rb}(d,s)$ instead of $W_{X_\rb,L,F}(d,s)$.

Let $p:X_{a,b}\to X$ be the real symplectic blow-up of $X$ at $a$ real points
and $b$ pairs of $\tau$-conjugated points.
Denote by $p^!d=PDp^*PD(d)$, where $PD$ stands for the Poincar\'e duality.

From the point of geometric view, an intuitive observation is that one will get the same number
when we try to count the real rational pseudo-holomorphic curves in $X$ and its blow-up in Welschinger's way if the blown-up points are away from
the real configuration. This implies the following theorems.

\begin{thm}\label{thm:bpr}
Let $X_\rb$ be a compact real symplectic $4$-manifold,
$d\in H_2(X;\zb)$ such that $c_1(X)\cdot d>0$ and $\tau_*d=-d$.
Denote by $p: X_{1,0}\to X_\rb$ the projection of the real symplectic blow-up of $X_\rb$ at $x\in\rb X$.
Then
\begin{equation}\label{eq:bpr1}
W_{X_\rb}(d,s)  = W_{ X_{1,0}}(p^!d,s),
\end{equation}
\begin{equation}\label{eq:bpr2}
W_{X_\rb}(d,s)  =  W_{ X_{1,0}}(p^!d-[E],s),\,\,\,\,\,\text{ if }c_1(X)\cdot d-2s\glt 2,
\end{equation}
where $E$ denotes the exceptional divisor  and $p^!d=PDp^*PD(d)$.
\end{thm}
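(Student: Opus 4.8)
The plan is to prove both identities by producing sign-preserving bijections between the two finite sets of real rational curves that the Welschinger numbers count, exploiting that the real symplectic blow-up $p\colon X_{1,0}\to X$ restricts to a $\tilde\tau$-equivariant symplectomorphism away from the exceptional divisor $E$ (here $\tilde\tau$ denotes the lift of $\tau$, and $\rb E\cong\rb\PP^1$ is a circle). Throughout I would fix a generic $\tau$-compatible $J\in\rb\jl_\omega$ that is integrable near the centre of the blow-up, so that $J$ lifts to a $\tilde\tau$-compatible $\tilde J$ on $X_{1,0}$ and $p$ is $(\tilde J,J)$-holomorphic over $X\setminus\{x\}$; by the deformation invariance of Welschinger invariants the two numbers may be evaluated with these adapted choices.

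First I would record the bookkeeping on the blow-up: $c_1(X_{1,0})=p^*c_1(X)-[E]$, $[E]^2=-1$, $p^!d\cdot[E]=0$, and $(p^!d-[E])\cdot[E]=1$, whence $c_1(X_{1,0})\cdot p^!d=c_1(X)\cdot d$ and $c_1(X_{1,0})\cdot(p^!d-[E])=c_1(X)\cdot d-1$. Thus the configurations have compatible cardinalities: class $p^!d$ is cut out by the same $r$ real and $s$ conjugate points as $d$, while class $p^!d-[E]$ is cut out by $r-1$ real and $s$ conjugate points, the missing real constraint being absorbed by the forced intersection with $E$. In particular the hypothesis $c_1(X)\cdot d-2s\glt 2$ is precisely the condition $r\glt 1$ that the configuration $\undl x$ contains a real point available to serve as the centre, which is why it appears only in \eqref{eq:bpr2}.

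For \eqref{eq:bpr1} I would take the centre $x\in\rb X$ generic, off the finitely many real curves in class $d$ through $\undl x$; since the real locus of each such curve is one-dimensional inside the surface $\rb X$, a generic $x$ lies on none of them. Every counted curve then lifts through $p^{-1}$ to a real $\tilde J$-curve disjoint from $E$, hence of class $p^!d$, and conversely any irreducible real $\tilde J$-curve of class $p^!d$ is disjoint from $E$: by positivity of intersections for $J$-holomorphic curves in a $4$-manifold it cannot be $E$ (the classes differ) and, having zero intersection with $E$, cannot meet it, so it projects to a curve of class $d$. For \eqref{eq:bpr2} I would instead take the centre to be a real configuration point $x_1\in\undl x$; for generic data each counted curve passes through $x_1$ as a smooth point of multiplicity one, so its proper transform has class $p^!d-[E]$ and, by positivity of intersections since $(p^!d-[E])\cdot[E]=1$, meets $E$ in a single transverse point, which is automatically real because $\tilde\tau$ fixes the unique point of $\tilde C\cap E$. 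The construction reverses, giving mutually inverse bijections carrying the configuration $\undl x$ to its image with $x_1$ deleted.

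The step I expect to be the genuine obstacle, as opposed to the classical complex analogue, is checking that these bijections preserve the Welschinger sign, i.e.\ the parity of the number of real solitary nodes. Because the centre is a smooth point of every counted curve, $p$ is a local real biholomorphism near each node, so nodes correspond bijectively and keep their type (real solitary, real nonsolitary, or a conjugate pair). What remains is a local model computation at the centre showing that no real node is created along $E$ and that the transverse meeting of the proper transform with $\rb E$ contributes nothing to the node count; here one uses that $x$ (respectively $x_1$) is real and that the curve is smooth and real there, so the proper transform is smooth at its intersection with $E$. Summing the equal signs over the two bijective sets then yields \eqref{eq:bpr1} and \eqref{eq:bpr2}.
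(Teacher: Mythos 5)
Your route is genuinely different from the paper's: you argue by a direct sign-preserving bijection between the two curve counts via proper transform and blow-down, whereas the paper never compares $X$ and $X_{1,0}$ directly. Instead it performs a real symplectic cut at $x$, identifies $W_{X_\rb}(d,s)$ with a relative count $\rb\cl^{0,0}(p^!d,\cdot,J_0)$ or $\rb\cl^{0,\delta_1^r}(p^!d-[E],\cdot,J_0)$ of curves in $X_{1,0}$ with prescribed contact along $E$ (Propositions \ref{prop:sum11} and \ref{prop:welrep11}), then performs a second cut of $X_{1,0}$ along $E$ to identify $W_{X_{1,0}}(p^!d,s)$ and $W_{X_{1,0}}(p^!d-[E],s)$ with the \emph{same} relative counts (Propositions \ref{prop:sum12} and \ref{prop:welrep12}). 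Your bookkeeping ($c_1(X_{1,0})\cdot p^!d=c_1(X)\cdot d$, the role of $r\glt 1$ in \eqref{eq:bpr2}, the reality of the unique point of $\tilde C\cap E$) is all correct, and the sign comparison you single out as the main obstacle is in fact the easy part once the bijection exists.

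The genuine gap is the step you dispose of in one clause: ``by the deformation invariance of Welschinger invariants the two numbers may be evaluated with these adapted choices.'' Deformation invariance lets you vary $(\omega,\tau)$ and the configuration; it does not let you evaluate $W_{X_{1,0}}$ with an arbitrary non-generic $\tilde J$. Your $\tilde J$ is constrained to be integrable near $E$ and pulled back from $X$ elsewhere, so you must prove it is admissible in Welschinger's sense: that the moduli spaces in classes $p^!d$ and $p^!d-[E]$ are cut out transversally (achievable, since every simple curve through $\undl x$ exits the constrained region, but this needs to be said) and, more seriously, that these zero-dimensional moduli spaces contain no reducible limits, no multiple covers, and no configurations with components falling into $E$ or multiply covering $E$. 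Without this, the finite set you biject with need not compute $W_{X_{1,0}}(p^!d,s)$ or $W_{X_{1,0}}(p^!d-[E],s)$. Ruling out exactly these degenerate contributions is where the paper does its real work --- the dimension counts in Propositions \ref{prop:sum11}, \ref{prop:sum12} and Lemmas \ref{lem:p2set}, \ref{lem:tp2set} exclude non-simple maps and components mapped into $V$ --- so your proposal defers rather than avoids the hard part. A secondary point to make explicit in \eqref{eq:bpr2}: you need $x_1$ to be a smooth (non-nodal, immersed) point of every counted curve, which is a genericity statement about the pair $(\undl x,J)$ that should be justified alongside, not after, the choice of $J$.
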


\begin{thm}\label{thm:bpc}
Let $X_\rb$ be a compact real symplectic $4$-manifold,
$d\in H_2(X;\zb)$ such that $c_1(X)\cdot d>0$ and $\tau_*d=-d$.
Suppose $y_1$, $y_2\in X\setminus\rb X$ is a $\tau$-conjugated pair, i.e., $\tau (y_1) = y_2$.
Denote by $p:X_{0,1}\to X_\rb$ the projection of the real symplectic blow-up of $X_\rb $ at $y_1$, $y_2$.
Then
\begin{equation} \label{eq:bpc}
W_{X_\rb}(d,s)  =  W_{X_{0,1}}(p^!d,s),
\end{equation}
\begin{equation}\label{eq:bpc-1}
W_{X_\rb}(d,s)  =  W_{X_{0,1}}(p^!d-[E_1]-[E_2],s-1), \,\,\,\,\,\, \mbox{ if } s \glt 1,
\end{equation}
where $E_1$, $E_2$ denote the exceptional divisors at $y_1$, $y_2$ respectively.
\end{thm}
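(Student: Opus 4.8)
The plan is to prove both identities via a Welschinger-sign-preserving bijection between the moduli of real rational curves upstairs and downstairs, exploiting that $y_1,y_2\in X\setminus\rb X$ so that $p$ is an isomorphism over a neighbourhood of the real part. First I would fix a generic $\tau$-compatible almost complex structure $J\in\rb\jl_\omega$ and a real configuration realizing $(r,s)$, then lift $J$ to a $\tau$-compatible structure $\tilde J$ on $X_{0,1}$ agreeing with $J$ away from $E_1\cup E_2$; note that the induced real structure swaps $E_1\leftrightarrow E_2$ since $\tau(y_1)=y_2$. The two bookkeeping computations $c_1(X_{0,1})\cdot p^!d=c_1(X)\cdot d$ and $c_1(X_{0,1})\cdot(p^!d-[E_1]-[E_2])=c_1(X)\cdot d-2$ show that the dimension constraints match: the first identity keeps $(r,s)$ fixed, while the second trades one conjugated pair for passage through the two blow-up centers, which is exactly why the hypothesis $s\glt 1$ is needed.

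For the first identity (eq:bpc), since $p^!d\cdot[E_i]=0$ a generic irreducible $\tilde J$-holomorphic rational curve in class $p^!d$ is disjoint from $E_1\cup E_2$, so $p$ maps it isomorphically to a $J$-holomorphic rational curve in class $d$ avoiding $y_1,y_2$; conversely any such curve in $X$ lifts to its total transform. Because the blow-up centers are non-real, this correspondence is defined over $\realline$ and is an isomorphism near $\rb X$, hence it carries real nodes to real nodes of the same type and preserves the count of solitary (elliptic) real double points. Summing the Welschinger signs on both sides then gives $W_{X_\rb}(d,s)=W_{X_{0,1}}(p^!d,s)$.

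For the second identity (eq:bpc-1) I would first use Welschinger's deformation invariance to specialize one of the $s$ conjugated pairs in the configuration to the pair $(y_1,y_2)$, which is legitimate because the invariant is independent of the generic real configuration with fixed $(r,s)$. A genericity argument shows that every counted real rational $J$-curve through $y_1,y_2$ is immersed with multiplicity one at each of these points, so its proper transform is a real rational curve in class $p^!d-[E_1]-[E_2]$ meeting each $E_i$ transversally in one point and passing through the remaining $r$ real points and $s-1$ conjugated pairs; conversely, since $(p^!d-[E_1]-[E_2])\cdot[E_i]=1$, a generic curve in this class downstairs projects to a class-$d$ curve through $y_1,y_2$. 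This yields a bijection between the two moduli problems, and again, because $y_1,y_2$ and hence $E_1,E_2$ are disjoint from $\rb X$, the proper transform alters none of the real double points, so the Welschinger signs agree term by term.

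The main obstacle is the moduli-theoretic control underlying the correspondence rather than the topological bookkeeping: one must guarantee, for a generic pair $(J,\text{configuration})$, that no counted curve acquires a singularity or higher tangency at $y_1,y_2$, that no curve on the blow-up degenerates into a component supported on $E_1\cup E_2$, and that these genericity conditions are attainable within the class $\rb\jl_\omega$ of $\tau$-compatible structures while keeping the configuration real. I also expect the sign-preservation step to require care: one must check that the proper transform creates and destroys no real node, hence no solitary real double point, which rests on $y_1,y_2$ being a conjugated pair in $X\setminus\rb X$ so that every modification occurs away from the real locus. Alternatively, this invariance can be recast through the real symplectic sum (degeneration) formula, degenerating $X$ to $X_{0,1}$ glued to a conjugated pair of $\PP^2$'s along $E_1\sqcup E_2$, which isolates precisely the local contribution that must be shown to be trivial.
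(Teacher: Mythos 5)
Your route is genuinely different from the paper's. You propose a direct correspondence: lift $J$ to a structure $\tilde J$ on $X_{0,1}$ making $p$ pseudo-holomorphic, match curves in class $p^!d$ with curves in class $d$ avoiding $y_1,y_2$ by total transform, and for the second identity specialize one conjugated pair of constraints to $(y_1,y_2)$ and take proper transforms. The paper never compares the two manifolds directly. It performs a real symplectic cut of $X$ at the pair $(y_1,y_2)$, giving $X_\sharp=(\pb^2\sqcup\pb^2)\cup_V X_{0,1}$, and a second real symplectic cut of $X_{0,1}$ along $E_1\sqcup E_2$, giving $\bigl(\pb_{E_1}(\ol(-1)\oplus\ol)\sqcup\pb_{E_2}(\ol(-1)\oplus\ol)\bigr)\cup_V X_{0,1}$; Propositions \ref{prop:sum21}--\ref{prop:welrep22} then identify $W_{X_\rb}(d,s)$, $W_{X_{0,1}}(p^!d,s)$ and $W_{X_{0,1}}(p^!d-[E_1]-[E_2],s-1)$ each with a signed count over a relative moduli space $\rb\cl^{\alpha^1,\beta^1,\alpha^2,\beta^2}(\cdot,\cdot,J_0)$ on $X_{0,1}$ relative to $E_1\sqcup E_2$, and the theorem follows by equating these common intermediate quantities. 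What the degeneration buys is exactly the genericity you flag as the main obstacle: the Ionel--Parker limit analysis together with Lemma \ref{lem:finite2} and the index inequalities in Proposition \ref{prop:sum21} rule out components falling into $V$, multiple covers, and excess tangency, for a $J_0$ that is genuinely generic on each piece.

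That obstacle is a real gap in your route as written, not a formality. The lifted $\tilde J$ is not generic among $\tau$-compatible structures on $X_{0,1}$: it is constrained to keep $E_1,E_2$ holomorphic and to be pulled back from $J$ near them, so the signed count you produce with it cannot be identified with $W_{X_{0,1}}$ (defined via a generic structure) without proving that $\tilde J$ is regular for the classes $p^!d$ and $p^!d-[E_1]-[E_2]$ and that no stable map with components mapped into $E_1\cup E_2$ contributes --- such configurations exist for every $\tilde J$ of this form, since the $E_i$ are $\tilde J$-holomorphic spheres. Likewise, for (\ref{eq:bpc-1}) the configuration containing $(y_1,y_2)$ together with the lifted $\tilde J$ is a correlated, hence non-generic, choice of the pair (structure, configuration), so Welschinger's invariance does not apply off the shelf. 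These are precisely the points the paper's symplectic-sum argument is built to circumvent; your closing remark that the statement "can be recast through the real symplectic sum formula, degenerating $X$ to $X_{0,1}$ glued to a conjugated pair of $\PP^2$'s" is in fact the paper's proof of (\ref{eq:bpc}) and half of the proof of (\ref{eq:bpc-1}) --- the other half being the second cut of $X_{0,1}$ along $E_1\sqcup E_2$, which you do not mention but which is needed to convert the relative count back into $W_{X_{0,1}}$. Your sign bookkeeping --- no isolated real double point is created or destroyed because all modifications occur away from $\rb X$ --- is correct and matches equation (\ref{3-3}) and its analogues in Propositions \ref{prop:welrep21} and \ref{prop:welrep22}.
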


From Theorem $\ref{thm:bpr}$ and Theorem $\ref{thm:bpc}$, it is easy to get

\begin{cor}\label{cor:bp}
Let $X_\rb$ be a compact real symplectic $4$-manifold,
$d\in H_2(X;\zb)$ such that $c_1(X)\cdot d>0$ and $\tau_*d=-d$.
Suppose $\undl{x}'\subset X$ is a real set consisting of $r'$ points in $\rb X$ and
$s'$ pairs of $\tau$-conjugated points in $X\backslash\rb X$ with $r'\llt r$, $s'\llt s$.
Denote by $p:X_{r',s'}\to X$  the projection of the real symplectic blow-up of $X$ at $\undl{x}'$.
Then
\begin{equation}\label{eq:mainresults}
W_{X_\rb}(d,s)  =  W_{X_{r',s'}}(p^!d,s),
\end{equation}
\begin{equation}\label{eq:mainresults-1}
W_{X_\rb}(d,s)  =  W_{X_{r',s'}}
(p^!d-\sum_{i=1}^{r'}[E_i]-\sum_{j=1}^{s'}([E_j']+[E_j'']),s-s'),
\end{equation}
where $E_i$, $E_j'$, $E_j''$ denote the exceptional divisors
corresponding to the real set $\undl x'$ respectively.
\end{cor}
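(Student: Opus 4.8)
The plan is to factor the blow-up $p\colon X_{r',s'}\to X$ as a composition of elementary blow-ups, each at a single real point or at a single $\tau$-conjugated pair, and then to apply Theorem \ref{thm:bpr} and Theorem \ref{thm:bpc} one factor at a time, inducting on $r'+s'$. Since the $r'$ real points and the $s'$ pairs are mutually distinct, these elementary blow-ups commute, their exceptional divisors are mutually orthogonal $(-1)$-curves, and the pullback is functorial: for a composition $p=q\circ q'$ one has $p^!=q'^!\circ q^!$. Consequently the classes $[E_i],[E_j'],[E_j'']$ accumulate additively, and the total transform in \eqref{eq:mainresults-1} is exactly the class obtained by subtracting, at each elementary step, the newly created exceptional divisor. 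The first identity \eqref{eq:mainresults} then follows immediately by iterating \eqref{eq:bpr1} over the $r'$ real points and \eqref{eq:bpc} over the $s'$ pairs, because these two identities carry no numerical hypothesis and preserve the parameter $s$ at every step.

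For the second identity \eqref{eq:mainresults-1} I would carry out the same induction, now applying \eqref{eq:bpr2} at a real point and \eqref{eq:bpc-1} at a pair. The key is to track two quantities along the induction: the value $c_1\cdot d$ of the running class and the running parameter $s$. A routine computation using $c_1(\text{blow-up})=p^*c_1-\sum PD[E]$ together with $[E]^2=-1$ for the exceptional $(-1)$-curves shows that subtracting one exceptional divisor at a real point decreases $c_1\cdot d$ by $1$ while fixing $s$, whereas subtracting the two exceptional divisors at a pair decreases $c_1\cdot d$ by $2$ and decreases $s$ by $1$. Hence, writing $k_0=c_1(X)\cdot d$, after $i$ real and $j$ pair blow-ups the running class satisfies $c_1\cdot d = k_0-i-2j$ and the running parameter is $s-j$.

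The one point that genuinely requires care, and where the hypotheses $r'\llt r$ and $s'\llt s$ enter, is verifying that the side conditions of \eqref{eq:bpr2} and \eqref{eq:bpc-1} persist throughout the induction. Substituting the running values into the condition $c_1\cdot d-2s\glt 2$ of \eqref{eq:bpr2}, the $j$-dependence cancels: the requirement to perform the $(i+1)$-st real blow-up becomes $k_0-i-2s\glt 2$, i.e. $i\llt r-1$ after using $r+2s=k_0-1$, and this holds for all $i$ with $0\llt i\llt r'-1$ precisely because $r'\llt r$. Likewise the condition $s\glt 1$ of \eqref{eq:bpc-1} for the $(j+1)$-st pair blow-up becomes $s-j\glt 1$, i.e. $j\llt s-1$, which holds for all $j$ with $0\llt j\llt s'-1$ precisely because $s'\llt s$.

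The hard part is therefore not any single inequality but confirming that these two bookkeeping conditions hold simultaneously and in the correct range; the cancellation above shows the admissibility of each elementary step is independent of the steps of the other type, so the order of the blow-ups is immaterial. Finally, one checks that every intermediate Welschinger invariant is defined: at stage $(i,j)$ one has $c_1\cdot d=k_0-i-2j\glt (r-r')+2(s-s')+1\glt 1>0$, so positivity of $c_1\cdot d$ is maintained throughout and each application of Theorem \ref{thm:bpr} and Theorem \ref{thm:bpc} is legitimate. Iterating to $i=r'$, $j=s'$ yields \eqref{eq:mainresults-1}.
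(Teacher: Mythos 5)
Your proposal is correct and follows essentially the same route as the paper, which simply asserts that the corollary follows by iterating Theorem \ref{thm:bpr} and Theorem \ref{thm:bpc} over the points of $\undl{x}'$. Your explicit bookkeeping — showing that $c_1\cdot d-2s\glt 2$ reduces to $i\llt r-1$ and $s\glt 1$ to $j\llt s-1$, so that the hypotheses $r'\llt r$, $s'\llt s$ are exactly what keeps every elementary step admissible — is a correct and worthwhile elaboration of a verification the paper leaves to the reader.
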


    Welschinger \cite{wel2005a} introduced a new $\theta$-invariant to describe the dependence of Welschinger
invariants on the number of real points in the real configurations, and obtained a {\bf wall-crossing formula}, see Theorem 3.2 of \cite{wel2005a}. More
precisely, when replacing a pair of real fixed points in the same component of $\rb X$ by a pair of imaginary conjugate points,
twice  the $\theta$-invariant  is the difference of the respective invariants. In this paper, using the degeneration method,
we reprove Welschinger's wall-crossing formula and verify that Welschinger's $\theta$-invariants of $X_\rb$ are the Welschinger invariants of the real blow-up $X_{1,0}$ of the real symplectic manifold
at one real point.

\begin{thm}\label{thm:wall}
Let $X_\rb$ be a compact real symplectic $4$-manifold,
 $d\in H_2(X;\zb)$ such that $c_1(X)\cdot d\glt4$ and $\tau_*d=-d$.
Denote by $p: X_{1,0}\to X$ the projection of the real symplectic blow-up of $X_\rb $ at $x\in\rb X$.
If $s\geq 1$, then
\begin{equation}
W_{X_\rb}(d,s-1)=W_{X_\rb}(d,s)+2W_{X_{1,0}}(p^!d-2[E],s-1), \label{eq:bpr3}
\end{equation}
where $E$ denotes the exceptional divisor  and $p^!d =PDp^*PD(d)$.
\end{thm}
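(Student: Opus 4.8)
The plan is to prove the formula as a genuine wall-crossing statement: I would follow a one-parameter family of point configurations across the ``wall'' at which two real points collide on $\rb X$ and then split off into a $\tau$-conjugate pair, and then identify the resulting jump with the blow-up invariant on the right-hand side. First I would fix a generic $\tau$-compatible $J\in\rb\jl_\omega$ and connect the two configurations computing the left- and right-hand Welschinger numbers. Since $r+2s=c_1(X)\cdot d-1$, the invariant $W_{X_\rb}(d,s-1)$ is computed by a configuration $\undl x^-$ with $r+2$ real points and $s-1$ conjugate pairs, while $W_{X_\rb}(d,s)$ is computed by a configuration $\undl x^+$ with $r$ real points and $s$ conjugate pairs. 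I would choose a path $\{\undl x_t\}_{t\in[-1,1]}$ that keeps $r$ real points and $s-1$ conjugate pairs fixed, and in the remaining two real points lets them approach each other along $\rb X$, collide at a chosen real point $x$ at $t=0$, and emerge for $t>0$ as a $\tau$-conjugate pair leaving $\rb X$. By Welschinger's invariance, the signed count of real rational $J$-curves through $\undl x_t$ is constant and equal to $W_{X_\rb}(d,s-1)$ for $t<0$ and to $W_{X_\rb}(d,s)$ for $t>0$; the entire content is the jump across $t=0$.

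Second, I would analyze the parametrized moduli space $\cM=\bigcup_t\cM_t$ of real rational $J$-curves in class $d$ meeting the moving constraints. For generic $J$ this is a smooth $1$-manifold away from $t=0$, and its only noncompact ends as $t\to0$ come from curves that acquire a singularity at the collision point $x$. Ruling out reducible limits, cusps, and multiple covers by dimension count and genericity, the surviving limits are exactly the real rational curves in class $d$ passing through the fixed $r$ real points and $s-1$ conjugate pairs and having a \emph{real node at $x$}. A local model for the deformations of such a nodal curve shows that, as $t$ crosses $0$, precisely two real smoothings appear or disappear near $x$ (the two local branches at $x$ can be matched with the two merging real points on one side of the wall and with the conjugate pair on the other), and that the two smoothings carry the same Welschinger sign, equal to the sign of the nodal curve. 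Hence each such nodal curve contributes $\pm2$ to $W_{X_\rb}(d,s-1)-W_{X_\rb}(d,s)$, which produces the factor $2$.

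Finally, I would identify the signed count of these nodal curves with the stated blow-up invariant. Blowing up $X$ at the collision point $x$, the proper transform gives a bijection between real rational curves in class $d$ with a node at $x$ and real rational curves in $X_{1,0}$ of class $p^!d-2[E]$ through the transforms of the $r$ real points and $s-1$ conjugate pairs; the dimension bookkeeping matches because $c_1(X_{1,0})\cdot(p^!d-2[E])-1=c_1(X)\cdot d-3=r+2(s-1)$, where the hypothesis $c_1(X)\cdot d\glt4$ keeps this number of constraints nonnegative so that the invariant is well defined. The signed count on the blow-up side is by definition $W_{X_{1,0}}(p^!d-2[E],s-1)$, so the jump equals $2W_{X_{1,0}}(p^!d-2[E],s-1)$ and the formula follows after rearrangement. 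The main obstacle is the local wall-crossing analysis of the middle paragraph: one must establish transversality of the parametrized moduli space, exclude all degenerations other than the formation of a single real node at $x$, and track Welschinger's orientation conventions carefully enough to pin down both the factor $2$ and the precise correspondence of signs between a real node of a curve in $X$ and the Welschinger sign of its proper transform in $X_{1,0}$.
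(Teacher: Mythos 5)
Your route is genuinely different from the paper's. The paper never follows a path of configurations through a collision: it performs a real symplectic cut at $x$, splitting $X$ into $\bar X^+\cong\pb^2$ glued to $\bar X^-\cong X_{1,0}$ along $V$, pushes two of the constraint points (either two real points, or a conjugate pair) into the $\pb^2$ piece, and classifies the limit stable maps (Proposition \ref{prop:wall1}). Both $W_{X_\rb}(d,s-1)$ and $W_{X_\rb}(d,s)$ are then expressed as sums of \emph{relative} counts on $X_{1,0}$ of curves meeting $E$ with prescribed real or conjugate intersection points (Proposition \ref{prop:wall2}), and a second degeneration along $E$ identifies the relevant combination of relative counts with $W_{X_{1,0}}(p^!d-2[E],s-1)$ (Propositions \ref{prop:wall3} and \ref{prop:wall4}). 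What you propose is instead Welschinger's original wall-crossing argument combined with the Itenberg--Kharlamov--Shustin observation that the resulting $\theta$-invariant is a Welschinger invariant of the blow-up; that architecture can be made to work, but it is not the paper's.

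As written, though, your middle paragraph has a genuine gap, and it sits exactly where the two sides of the formula acquire their different signs. A real node of a real curve at $x$ is either non-isolated (two real branches) or isolated (two conjugate branches), and these two kinds of nodal curves do not appear symmetrically on the two sides of the wall: the non-isolated ones arise as limits of curves through the two colliding real points (each branch absorbing one point, whence a factor $2$), while the isolated ones arise as limits of curves through the conjugate pair. Moreover, the isolated node at $x$ contributes a factor $-1$ to the mass in $X$ but is deleted when passing to the proper transform in $X_{1,0}$. So it is not true that ``each nodal curve contributes $\pm2$ with the sign of the nodal curve'' to the jump: one family enters $W_{X_\rb}(d,s-1)$ with weight $+2$ times the sign of its proper transform, the other enters $W_{X_\rb}(d,s)$ with weight $-2$ times the sign of its proper transform, and only this asymmetric bookkeeping makes the two families add up to $2W_{X_{1,0}}(p^!d-2[E],s-1)$ after rearrangement. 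Finally, your last step asserts the identification with the blow-up invariant ``by definition'', but the proper transforms are counted with respect to an almost complex structure adapted to the blow-down at $x$, equivalently as curves meeting $E$ transversally at two unprescribed points; that this relative count equals the absolute invariant $W_{X_{1,0}}(p^!d-2[E],s-1)$ is a separate statement, which the paper establishes by the second symplectic-sum degeneration along $E$.
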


\begin{rem}\label{thm:bp}
The same argument as in the proofs of previous theorems generalizes the formulae (\ref{eq:mainresults}) and (\ref{eq:mainresults-1}) to
the general case that $\rb X$ is disconnected. More precisely, assume that  $X_\rb$ is a compact real symplectic $4$-manifold
and $\rb X$ is disconnected. Suppose $\undl{x}'\subset X$ is a real set made of $r'$ points in $L$ and
$s'$ pairs of $\tau$-conjugated points in $X$ with $r'\llt r$, $s'\llt s$.
Denote by $\tilde L$ the connected component of $\rb X_{r',s'}$ corresponding to $L$ .
If only one of the blown-up real points belongs to $L$, $\tilde L=L\sharp\rb P^2$.
We assume $F$ has a $\tau$-invariant compact representative
$\fl\subset X\setminus\undl{x}'$, and denote $\tilde F=p^!F$.
Denote by $p:X_{r',s'}\to X$ the projection of the real symplectic blow-up of $X$ at $\undl{x}'$.
Then
\begin{equation}\label{eq:main-1}
W_{X_\rb,L,F}(d,s) = W_{X_{r',s'},\tilde L,\tilde F}(p^!d,s),
\end{equation}
\begin{equation}\label{eq:main-2}
W_{X_\rb,L,F}(d,s) = W_{X_{r',s'},\tilde L,\tilde F}
(p^!d-\sum_{i=1}^{r'}[E_i]-\sum_{j=1}^{s'}([E_j']+[E_j'']),s-s'),
\end{equation}
\begin{equation}\label{eq:main-3}
W_{X_\rb,L,F}(d,s-1) = W_{X_\rb,L,F}(d,s)+2W_{X_{1,0},\tilde L,\tilde F}(p^!d-2[E],s-1),
\end{equation}
where $E_i$, $E_j'$, $E_j''$ denote the exceptional divisors
corresponding to the real set $\undl x'$ respectively.
\end{rem}

\medskip\noindent
{\bf Acknowledgment}: The first author would like to thank E. Brugall\'e
 for answering my question  about the behavior of pseudo-holomorphic
curves under symplectic sum.

\section{Preliminary}

\subsection{Real blow-ups of the projective plane}\label{subsec:real blow-up}

In this subsection, we consider how the standard real structure, i. e., the conjugate on $\mathbb{C}P^2$ induces a real structure on $\tilde{\mathbb{C}P}^2$.
For this purpose, we must distinguish the real points from other points in $\mathbb{C}P^2\setminus \mathbb{R}P^2$.

First of all, we review the blow-up of $\mathbb{C}P^2$ at a point $x$. Let $U$ be a neighborhood of $x$ with local coordinate $(z_1,z_2)$.
Denote
$$
     \pi : V:= \{((z_1,z_2), [w_1:w_2])\in U\times \mathbb{C}P^1\mid z_iw_j = z_jw_i\}  \longrightarrow U  \\
$$
the projection to $U$ via the first factor . There is a natural identification map $gl$ between $V\setminus E=\pi^{-1}(0)$ and $U\setminus \{x\}$, where $E= \pi^{-1}(0)$
is the exceptional divisor. We get the blow-up
$$
      \tilde{\mathbb{C}P}^2 = \mathbb{C}P^2\setminus \{x\} \bigcup_{gl} V.
$$

 For a real point $x\in \mathbb{R}P^2\subset\mathbb{C}P^2$, denote by $\tilde{\mathbb{C}P}^2_{1,0}$ the blow-up of $\mathbb{C}P^2$ at $x$.
We may choose a conjugation invariant neighborhood $U$ of $x$ in $\mathbb{C}P^2$ and the local coordinate $(z_1,z_2)$.

Define an involution $\tau : V\longrightarrow V$ as
$$
    \tau (((z_1,z_2), [w_1:w_2])) := ((\bar{z}_1, \bar{z}_2),[\bar{w}_1:\bar{w}_2]).
$$
It is easy to check that this involution coincides with the one induced on $V\setminus E$ by identification with $U\setminus \{x\}$. This implies that the standard
real structure on $\mathbb{C}P^2$ naturally induces a real structure on $\tilde{\mathbb{C}P}^2_{1,0}$ at a real point $x\in \mathbb{R}P^2$.

Since there is no conjugation-invariant neighborhood for the points in $\mathbb{C}P^2\setminus \mathbb{R}P^2$, to obtain a real structure on the blow-up, we need to blow up simultaneously
a pair of two conjugated points. Let $x_1$ and $x_2=\bar{x}_1$ be a pair of conjugated points in $\mathbb{C}P^2$. Choose a neighborhood $U_1$ of $x_1$ with local coordinate $(z_1,z_2)$, a neighborhood
$U_2= \bar{U_1}$ of $x_2$ with local coordinate $(\bar{z}_1, \bar{z}_2)$. Denote by $\tilde{\mathbb{C}P}^2_{0,1}$ the blow-up of $\mathbb{C}P^2$ at points $x_1$ and $x_2$ and $\pi$
the natural projection of the blow-up. Denote
\begin{eqnarray*}
     V_1 & := & \{((z_1,z_2), [w_1:w_2])\in U_1\times \mathbb{C}P^1\mid z_iw_j = z_jw_i\} = \pi^{-1}(U_1),\\
     & & \\
     V_2 & := & \{((z_1,z_2), [w_1:w_2])\in U_2\times \mathbb{C}P^1\mid z_iw_j = z_jw_i\} = \pi^{-1}(U_2).
\end{eqnarray*}
Define the involution $\tau : V_1\bigcup V_2 \longrightarrow V_1\bigcup V_2$ as
$$
       \tau(((z_1,z_2),[w_1:w_2])) := ((\bar{z}_1,\bar{z}_2), [\bar{w}_1:\bar{w}_2]).
$$
This involution coincides by construction with the one induced by the identification $\mathbb{C}P^2\setminus \{x_1,x_2\} = \tilde{\mathbb{C}P}^2_{0,1}\setminus \{\pi^{-1}(x_1), \pi^{-1}(x_2)\}$.
Therefore, we also can obtain a natural real structure on the blow-up $\tilde{\mathbb{C}P}^2_{0,1}$.

\subsection{Symplectic cut}\label{subsec:sympcut}

Lerman's symplectic cutting \cite{ler1995} is a simple and versatile operation on Hamiltonian $S^1$-manifolds.
Suppose that $X_0\subset X$ is an open codimension zero connected submanifold with a Hamiltonian $S^1$-action. Let  $H : X_0\longrightarrow \rb$
be a Hamiltonian function with $0$ as a regular value. If $H^{-1}(0)$ is a separating hypersurface of $X$, then we obtain two connected
manifolds $X_0^\pm$ with boundary $\partial X_0^\pm = H^{-1}(0)$, where the $+$ side corresponds to $H<0$.
Suppose further that $S^1$ acts freely on $H^{-1}(0)$ . Then the symplectic reduction $Z=H^{-1}(0)/S^1$ is canonically a symplectic manifold of dimension
2 less. Collapsing the $S^1$-action on $\partial X^\pm = H^{-1}(0)$, we obtain smooth closed  manifolds $\bar{X}^\pm$
containing respectively real codimension 2 submanifolds $Z^\pm = Z$ with opposite normal bundles. Furthermore $\bar{X}^\pm$
admits a symplectic structure $\bar{\omega}^\pm$ which agrees with the restriction of $\omega$ away from $Z$, and whose restriction to $Z^\pm$
agrees with the canonical symplectic structure $\omega_Z$ on $Z$ from symplectic reduction. The pair of symplectic manifolds $(\bar{X}^\pm, \bar{\omega}^\pm)$
is called the {\bf  symplectic cut} of $X$ along $H^{-1}(0)$.

This is neatly shown by considering $X_0\times \cb$ equipped with appropriate product symplectic structures and the product $S^1$-action on $X_0\times \cb$
where $S^1$ acts on $\cb$ by complex multiplication. The extended action is Hamiltonian if we use the standard symplectic structure $\sqrt{-1}dw\wedge d\bar{w}$
or its negative on the $\cb$ factor, see \cite{ler1995}. Denote by $\mu : X_0 \longrightarrow \mathbb{R}$ the moment map of the $S^1$-action on $X_0$, then
$\bar{X}^+ = \bar{X}_{\mu \leq \epsilon}$, $\bar{X}^- = \bar{X}_{\mu\geq \epsilon}$.

The normal connected sum operation \cite{gom1995,mw1994} or the fiber sum operation is the inverse operation of the symplectic cut.
Given two symplectic manifolds containing symplectomorphic codimension 2 symplectic submanifolds with opposite normal bundles, the normal connected sum operation produces
a new symplectic manifold by identifying the tubular neighborhoods.

Notice that we can apply the normal connected sum operation to the pairs $(\bar{X}^+, \bar{\omega}^+, Z^+)$ and $(\bar{X}^-, \bar{\omega}^-, Z^-)$ to recover $(X,\omega)$.

According to McDuff \cite{m1991}, the blow-up operation in symplectic geometry amounts to a
removal of an open symplectic ball followed by a collapse of some boundary directions.
In fact, we may apply the symplectic cut to construct the blow-up of symplectic manifold $X$ at a point $p$.
By the symplectic neighborhood theorem, take $X_0$ be a symplectic ball of radius $\epsilon_0$ centered at $p$ with complex
coordinates $(z_1, \cdots, z_n)$, where $\dim X = 2n$. Consider the Hamiltonian $S^1$-action on $X_0$ by
complex multiplication. Fix $\epsilon$ with $0<\epsilon <\epsilon_0$ and consider the moment map
$$
      H(u) = \sum_{i=1}^n |z_i|^2 - \epsilon, \,\,\,\,\, u\in X_0.
$$
Write the hypersurface $P= H^{-1}(0)$ in $X$  corresponding to the sphere with radius $\epsilon$. We cut $X$ along $P$ to obtain
two closed symplectic manifolds $\bar{X}^+$ and $\bar{X}^-$, one of which is $\cb P^n$. Following the notations of \cite{hu2000,lr2001},
we denote   $\bar{X}^+=\cb P^n$, $\bar{X}^-=\tilde X$  is the {\bf symplectic blow-up} of $X$.

\subsection{Real symplectic cut}\label{subsec:realsympcut}

Let $(X,\omega_X,\tau_X)$ and $(Y,\omega_Y,\tau_Y)$ be two real compact symplectic manifolds.
Suppose $V\subset X$ and $V\subset Y$ are real symplectic hypersurfaces so that $e( N_{V|X})+e( N_{V|Y})=0$,
where $N_{V|X}$, $N_{V|Y}$ are the normal bundles of $V$ in $X$ and $Y$ respectively.
Denote by $\omega_V$ the symplectic form $\omega_X|_V=\omega_Y|_V$ on $V$
and by $\tau_V$ the real structure $\tau_X|_V=\tau_Y|_V$.
There is a real structure $\tau_\sharp$ on $X\sharp_VY$ induced by the real structures
$\tau_X$, $\tau_Y$.
Suppose $\pi:\zl\to \Delta$ is the symplectic sum(c.f.\cite{gom1995,ip2004,mw1994}).
Equip the disc $\Delta$ with the complex conjugate.
There is a real structure $\tau_\zl$ on $\zl$ induced by $\tau_X$ and $\tau_Y$
such that the map $\pi:\zl\to \Delta$ is real.
See \cite{brugalle2016} or \cite{bp2014} for more about real symplectic sum in dimention $4$.

Let $(X,\omega,\tau)$ be a real symplectic manifold.
 Assume $H:X\to\rb$ is a $\tau$-invariant smooth Hamiltonian i.e., $H\circ\tau=H$,
then we call $H$ a \textit{real Hamiltonian}(c.f.\cite{s2010}).
A Hamiltonian circle action on $(X,\omega,\tau)$ is a $1$-parameter subgroup $\rb\to Symp(X):t\mapsto\psi_t$
of symplectomorphisms of $X$ which is $2\pi$-periodic, i.e. $\psi_{2\pi}=id$, and which is the integral of a Hamiltonian
vector field $X_H$. The Hamiltonian function $H:X\to\rb$ in this case is called the moment map of the action.
If the Hamiltonian circle action on $(X,\omega,\tau)$ satisfies
\begin{equation}\label{eq:rhcaction}
\psi_{2\pi-t}\circ\tau=\tau\circ\psi_t
\end{equation}
for all $t\in[0,2\pi]$, we call it a \textit{real Hamiltonian circle action}. The moment map of a real Hamiltonian
circle action is a real Hamiltonian.

Let $(X,\omega,\tau)$ be a real symplectic manifold with a real Hamiltonian circle action.
Suppose that $\mu:X\to\rb$ is a real moment map.
Let $(\mu^{-1}(0)/S^1,\omega_\mu)$ be the symplectic reduction.
There is a natural real structure $\tau_\mu$
on $\mu^{-1}(0)/S^1$ induced by $\tau$ on $X$.
Define
\begin{align*}
\tau_\mu:\mu^{-1}(0)/S^1&\to \mu^{-1}(0)/S^1\\
[x]&\mapsto [\tau(x)].
\end{align*}
Suppose $x$, $y\in \mu^{-1}(0)$ such that $[x]=[y]$,
then there is a $t\in[0,2\pi]$ such that $\psi_t(x)=y$.
By equation $(\ref{eq:rhcaction})$,
$\psi_{2\pi-t}(\tau(x))=\tau\circ\psi_t(x)=\tau(y)$.
Therefore, $[\tau(x)]=[\tau(y)]$ and $\tau_\mu$ is well defined.
Obviously,
the reduced space $(\mu^{-1}(0)/S^1,\omega_\mu,\tau_\mu)$
is a real symplectic manifold.

Then we can find that the real symplectic manifold
$(\mu^{-1}(\vep)/S^1,\omega_\mu,\tau_\mu)$ is a real symplectic manifold embedded in both
$\bar X_{\mu\glt\vep}$ and $\bar X_{\mu\llt\vep}$ as a codimension $2$
real symplectic submanifold but with opposite normal bundles.
The pair of real symplectic manifolds $\bar X_{\mu\glt\vep}$, $\bar X_{\mu\llt\vep}$
is called the {\bf real symplectic cut} of $X$ along $\mu=\vep$.

\begin{rem}
Let $X_0\subset X$ be an open codimension zero real submanifold equipped with
real Hamiltonian $S^1$ action with a real proper momentum map $\mu:X_0\to\rb$.
Suppose that $\mu$ achieves its maximal value $c$ at a single point $p\in\rb X$.
For a sufficient small $\vep$, $p$ is the only critical point in the
set $X_{\mu>c-\vep}=\{x\in X_0|c-\mu(x)<\vep\}$.
For all $0<\delta<\vep$, the real symplectic manifold $\bar X_{\mu\llt c-\delta}$
is the real blow-up of $X_0$ at $p$ by a $\delta$ amount.
We define $\bar{X}^+:=\bar X_{\mu\glt c-\delta}$ and
$\bar{X}^-:=(X-X_0)\cup\bar X_{\mu\llt c-\delta}$.
We have $\bar{X}^+=\cb P^n$, $\bar{X}^-=X_{1,0}$, where $X_{1,0}$ is the real blow-up of
$X$ at a real point.
Suppose that $\mu$ achieves its maximal value $c$ at a single $\tau$-conjugated pair
$p_1$, $p_2\in X\setminus\rb X$.
Then $\bar{X}^+=\cb P^n\amalg\cb P^n$, $\bar{X}^-=X_{0,1}$, where $X_{0,1}$ is the real blow-up of
$X$ at a pair of $\tau$-conjugated points.
\end{rem}

\subsection{Welschinger invariants}\label{subsec:weldef}
Let $(X,\omega,\tau)$ be a compact real symplectic $4$-manifold.
Assume that the first Chern class $c_1(X)$ of the symplectic manifold $(X,\omega)$
is not a torsion element and let $d\in H_2(X;\zb)$ be a homology class satisfying
$c_1(X)\cdot d>0$ and $\tau_*d=-d$.
Let $\undl x=(x_1,...,x_m)$ be an ordered set of distinct points of $X$
such that $\tau(\undl x)=\undl x$.
Such a set is called a \textit{real configuration} of points.
Let $\sigma(\tau)$ be the order two permutation of $\{1,...,m\}$ induced by $\tau$.
Let $S$ be an oriented $2$-sphere, $\jl_S$ be the space of complex structure
of class $C^l$ of $S$ compatible with its orientation and $\undl z=(z_1,...,z_m)$
be $m$ distinct points on $S$. Define $\pl^d(\undl x)$ be the set of pseudo-holomorphic maps from $S$ to $X$ which
 pass through $\undl x$ and represent class $d$. Let $\pl^*(\undl x)$ be the subspace of $\pl^d(\undl x)$
consisting of simple maps.

 Denote by $\mbox{\it Diff}(S,\undl z)$ the group of diffeomorphisms of class $C^{l+1}$ of $S$,
which either preserve the orientation and fix $\undl z$,
or reverse the orientation and induce the permutation on $\undl z$ associated to $\tau$.
Let $\mbox{\it Diff}^+(S,\undl z)$(resp. $\mbox{\it Diff}^-(S,\undl z)$) be the subgroup of $\mbox{\it Diff}(S,\undl z)$
of orientation preserving diffeomorphisms (resp. its complement in $\mbox{\it Diff}(S,\undl z)$).
The group $\mbox{\it Diff}(S,\undl z)$ acts on $\pl^d(\undl x)\times\jl_S\times\jl_\omega$ by
\begin{equation*}
\phi(u,J_S,J)=
\left\{ \begin{array}{ll}
(u\circ\phi^{-1},(\phi^{-1})^* J_S,J),\,\,\,\,\mbox{if}\,\, \phi\in \mbox{Diff}^+(S,\undl z),\\
\\
(\tau\circ u\circ\phi^{-1},(\phi^{-1})^*J_S, \tau^* J), \,\,\,\, \mbox{if}\,\, \phi\in \mbox{Diff}^-(S,\undl z),
        \end{array} \right.
\end{equation*}
Denote by $\ml^d(\undl x)$ the quotient of $\pl^*(\undl x)$ by the action of
$\mbox{Diff}^+(S,\undl z)$. Let $\pi:\ml^d(\undl x)\to \jl_\omega$ be the projection.

\begin{prop}{\cite[Proposition 1.8]{wel2005a}}
The space $\ml^d(\undl x)$ is a separable Banach manifold of class $C^{l-k}$.
The projection $\pi:\ml^d(\undl x)\to\jl_\omega$ is Fredholm of index
$\ind_\rb(\pi)=2(c_1(X)\cdot d-1-m)$.
\end{prop}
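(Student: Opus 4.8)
The plan is to realize $\ml^d(\undl x)$ as the quotient of the zero locus of a Fredholm section of a Banach bundle, and to read off both the manifold structure and the index from the linearized Cauchy--Riemann operator. Throughout I would work in the spaces of maps and structures of the fixed regularity (Sobolev $W^{k,p}$, or Hölder $C^l$ as in the statement), so that all the function spaces involved are separable. First I would set up the universal object: let $\cB$ denote the Banach manifold of triples $(u,J_S,J)$ where $u:S\to X$ is a map of the chosen regularity with $[u]=d$ and $u(z_i)=x_i$ for $i=1,\dots,m$, $J_S\in\jl_S$, and $J\in\jl_\omega$, with local charts furnished by the exponential map of a metric on $X$ for the $u$-factor and the standard Banach charts on $\jl_S$ and $\jl_\omega$. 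Over $\cB$ sits the Banach bundle $\cE$ whose fibre at $(u,J_S,J)$ is the space of $(0,1)$-forms $\Omega^{0,1}_{J_S,J}(u^*TX)$, and the assignment $(u,J_S,J)\mapsto\tfrac12(du+J\circ du\circ J_S)$ defines a smooth section $\Phi$ of $\cE$ whose zero set cut out by the constraints is the universal moduli space; $\pl^*(\undl x)$ is its open subset of simple maps.

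The heart of the argument is the smoothness, which I would obtain by a transversality (Sard--Smale) argument. At a zero $(u,J_S,J)$ the vertical differential of $\Phi$ is
\[
D\Phi(\xi,Y_S,Y)=D_u\xi+\tfrac12\,Y\circ du\circ J_S+\tfrac12\,J\circ du\circ Y_S ,
\]
where $D_u$ is the usual linearized operator acting on $\xi\in\Gamma(u^*TX)$ subject to $\xi(z_i)=0$. I would show $D\Phi$ is surjective. Since $D_u$ is Fredholm its image is closed of finite codimension, so it suffices to annihilate every $\eta$ in the $L^q$-cokernel ($\tfrac1p+\tfrac1q=1$) by a variation $Y$ of $J$ alone. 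Because $u$ is \emph{simple} it is somewhere injective and immersed on a dense open set; choosing a point $\zeta\in S$, distinct from all $z_i$, at which $u$ is an injective immersion with $u^{-1}(u(\zeta))=\{\zeta\}$, a nonzero $\eta$ would be nonzero near $u(\zeta)$, and a variation $Y$ supported in a small neighbourhood of $u(\zeta)$ can be arranged so that $\langle Y\circ du\circ J_S,\eta\rangle$ integrates to something nonzero, contradicting $\eta\perp\operatorname{im}D\Phi$; unique continuation for the formal adjoint of $D_u$ then forces $\eta\equiv0$. Hence $\Phi$ is transverse to the zero section and the universal space is a separable Banach manifold by the implicit function theorem. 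Finally $\mbox{Diff}^+(S,\undl z)$ acts freely and properly on $\pl^*(\undl x)$---freeness is exactly where simpleness enters, a simple map admitting no nontrivial automorphisms---so $\ml^d(\undl x)$ is again a separable Banach manifold, the loss of regularity to $C^{l-k}$ being the standard one coming from the functional-analytic setup and elliptic estimates.

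For the Fredholm property and the index I would note that $\pi$ is the restriction to $\ml^d(\undl x)$ of the projection $\cB\to\jl_\omega$, so its kernel and cokernel at $[u,J_S,J]$ are identified with those of the deformation operator of the $J$-holomorphic curve through $\undl x$ together with its domain moduli; thus $\pi$ is Fredholm with index equal to the virtual dimension of that fibre. By Riemann--Roch the parametrized operator $D_u$ on $u^*TX$ (rank $n=2$, genus $0$) has real index $2(n+c_1(X)\cdot d)=4+2\,c_1(X)\cdot d$. Subtracting $2n=4$ for each of the $m$ incidence constraints $u(z_i)=x_i$, and adding the net contribution $2(m-3)$ of varying $J_S$ modulo $\mbox{Diff}^+(S,\undl z)$ (the number $h^1-h^0=m-3$ of the bundle $TS(-\undl z)\cong\cO(2-m)$ on $\PP^1$), yields
\[
\ind_\rb(\pi)=\big(4+2\,c_1(X)\cdot d\big)-4m+2(m-3)=2\big(c_1(X)\cdot d-1-m\big),
\]
as claimed.

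The main obstacle is the transversality step in the second paragraph: one must guarantee that an injective immersed point can be chosen away from the marked points $z_1,\dots,z_m$, and then control the cokernel through unique continuation for the adjoint of $D_u$. Without somewhere injectivity the localized perturbation of $J$ can fail to be effective, which is precisely why one restricts from the outset to the simple locus $\pl^*(\undl x)$; the remaining ingredients (Banach charts, the Fredholm index via Riemann--Roch, and properness of the reparametrization action) are standard once this surjectivity is in hand.
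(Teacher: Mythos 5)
The paper does not prove this statement: it is quoted verbatim from Welschinger's article (Proposition 1.8 of the cited reference), so there is no internal proof to compare against. Your argument --- universal moduli space as the zero locus of a Fredholm section, surjectivity of the linearization via perturbations of $J$ localized at a somewhere-injective point together with unique continuation, the free and proper action of $\mbox{Diff}^+(S,\undl z)$ on the simple locus, and the index count $\bigl(4+2\,c_1(X)\cdot d\bigr)-4m+2(m-3)=2\bigl(c_1(X)\cdot d-1-m\bigr)$ --- is correct and is essentially the same standard Sard--Smale/Riemann--Roch argument given in the cited source.
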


The manifold $\ml^d(\undl x)$ is equipped with an $\zb/2\zb$ action.
Let $\rb\ml^d(\undl x)$ denote the fixed point set of this action.
$\pi_\rb:\rb\ml^d(\undl x)\to\rb\jl_\omega$ is the projection.

\begin{prop}{\cite[Proposition 1.9]{wel2005a}}
The projection $\pi_\rb:\rb\ml^d(\undl x)\to\rb\jl_\omega$ is Fredholm
of index $\ind_\rb(\pi_\rb)=c_1(X)\cdot d-1-m$.
\end{prop}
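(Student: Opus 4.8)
The plan is to realize $\rb\ml^d(\undl x)$ as the fixed-point locus of the order-two involution on $\ml^d(\undl x)$ described above, and to deduce the index of $\pi_\rb$ from that of $\pi$ — already computed in the preceding Proposition to be $2(c_1(X)\cdot d-1-m)$ — by a pointwise linear-algebra argument on the linearization. Recall that the $\zb/2\zb$-action is generated by the involution coming from $\mbox{Diff}^-(S,\undl z)$: on representatives it sends $(u,J_S,J)$ to $(\tau\circ u\circ\phi^{-1},(\phi^{-1})^*J_S,\tau^*J)$ with $\phi\in\mbox{Diff}^-(S,\undl z)$, and it covers the involution $J\mapsto\tau^*J$ on $\jl_\omega$ whose fixed locus is exactly $\rb\jl_\omega$. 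Since $\pi$ is equivariant, it restricts to $\pi_\rb:\rb\ml^d(\undl x)\to\rb\jl_\omega$ once both fixed loci are known to be submanifolds.

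First I would check that $\rb\ml^d(\undl x)$ and $\rb\jl_\omega$ are Banach submanifolds and identify their tangent spaces. Using an equivariant chart for the smooth involution (averaging a local model), each fixed set is locally cut out as the $(+1)$-eigenspace of the differential of the involution, a closed complemented real subspace; hence the fixed loci are submanifolds with $T_p\rb\ml^d(\undl x)=\ker(\iota_M-\mathrm{Id})$, where $\iota_M$ is the differential of the involution on $T_p\ml^d(\undl x)$, and similarly $\iota_J$ on $T\jl_\omega$. The key structural point is that $\iota_M$ is \emph{complex antilinear} with respect to the natural complex structure $J_V$ on $T_p\ml^d(\undl x)$ — induced by $J$ acting pointwise on sections along $u$ together with the complex structure $J_W$ on $\jl_\omega$: a tangent vector represented by a section $\xi$ along $u$ is sent to $d\tau\circ\xi\circ c$, and since $\tau$ is $J$-antiholomorphic, $d\tau\circ J=-J\circ d\tau$, so $\iota_M(J_V\xi)=-J_V\,\iota_M(\xi)$; the analogous statement holds for $\iota_J$.

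Finally I would compute the index. At $p\in\rb\ml^d(\undl x)$ the differential $D:=d\pi_p:V\to W$ (with $V=T_p\ml^d(\undl x)$, $W=T_{\pi(p)}\jl_\omega$) is real-Fredholm of real index $2(c_1(X)\cdot d-1-m)$ and satisfies $D\iota_M=\iota_J D$. Splitting into eigenspaces $V=V^+\oplus V^-$, $W=W^+\oplus W^-$, equivariance gives $D=D^+\oplus D^-$ with $D^\pm:V^\pm\to W^\pm$, so that $d\pi_\rb=D^+$ and $\ind_\rb D=\ind_\rb D^++\ind_\rb D^-$. Writing $D=L+A$ with $L$ the $\C$-linear part and $A$ the compact $\C$-antilinear part coming from the Nijenhuis term, the uniqueness of this decomposition together with $\iota_M$ being antilinear forces $L$ and $A$ to commute with $\iota_M$ separately; hence $A^\pm$ is compact and $\ind_\rb D^\pm=\ind_\rb L^\pm$. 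Because $\iota_M$ is $\C$-antilinear, $J_V$ maps $V^+$ isomorphically onto $V^-$ and $J_W$ maps $W^+$ onto $W^-$, and $\C$-linearity of $L$ gives $L^-=J_W\,L^+\,(J_V|_{V^+})^{-1}$, so $\ind_\rb D^+=\ind_\rb L^+=\ind_\rb L^-=\ind_\rb D^-$, each equal to $\tfrac12\ind_\rb D=c_1(X)\cdot d-1-m$. In particular $D^+=d\pi_\rb$ is Fredholm of this index, proving the Proposition. The main obstacle is the Banach-manifold bookkeeping in the middle step — producing the equivariant chart, checking the eigenspaces are complemented, and verifying that the antilinear Nijenhuis term genuinely commutes with the geometric involution so that the reduction to the $\C$-linear part is legitimate; the index halving itself is then forced by the $\C$-antilinearity of $\iota_M$.
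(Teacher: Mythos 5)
This statement is quoted verbatim from Welschinger \cite{wel2005a}; the paper under review gives no proof of it, so there is no internal argument to compare against. Your proposal is the standard index-halving mechanism for real Cauchy--Riemann operators and is, in substance, the argument underlying Welschinger's Propositions 1.8--1.9: the linearization is a $\C$-linear Fredholm operator plus a compact $\C$-antilinear zeroth-order term, the differential of the $\zb/2\zb$-action is $\C$-antilinear because $\tau$ is $J$-antiholomorphic, and the uniqueness of the linear/antilinear decomposition forces both pieces to intertwine the involutions, so the $(+1)$-eigenspace restriction has half the real index. The linear algebra is correct as written; the only points requiring care are exactly the ones you flag (equivariant charts so that the fixed loci are Banach submanifolds with complemented tangent spaces, and compatibility of the complex structure on $T\ml^d(\undl x)$ with the quotient by reparametrizations and the point constraints), all of which are standard.
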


Suppose $\rb X$ is connected.
Let $c_1(X)\cdot d-1=r+2s$, $\undl x\subset X$ be a real configuration consisting of $r$ real points in $\rb X$
and $s$ pairs of $\tau$-conjugated points in $X\backslash\rb X$.
In this case, we denote $\cl(d,\undl x,J)=\ml^d(\undl x)$,
$\rb\cl(d,\undl x,J)=\mathbb{R} \ml^d(\undl x)$.

\begin{prop}\cite{wel2005a}
Let $J\in\rb\jl_\omega$ be generic, the set $\cl(d,\undl x,J)$ is finite.
Moreover, these curves are all irreducible and have only transversal double
points as singularities. The total number of double points of curve $C$ in $\cl(d,\undl x,J)$
equal to $\delta=\frac{1}{2}(d^2-c_1(X)\cdot d+2)$.
\end{prop}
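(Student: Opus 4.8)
The plan is to read off finiteness from the index computation already established, to extract the nodal structure from the standard genericity theory of $J$-holomorphic curves in dimension four, and to obtain the double-point count from the adjunction formula.

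First I would pin down the dimension. Since the number of incidence points is $m=r+2s=c_1(X)\cdot d-1$, the preceding proposition gives $\ind_\rb(\pi_\rb)=c_1(X)\cdot d-1-m=0$. Applying the Sard--Smale theorem to the Fredholm map $\pi_\rb:\rb\ml^d(\undl x)\to\rb\jl_\omega$, the regular values form a set of the second category, and for such a generic $J$ the fibre $\pi_\rb^{-1}(J)=\cl(d,\undl x,J)$ is a $0$-dimensional manifold, hence discrete. To upgrade discreteness to finiteness I would invoke Gromov compactness: any sequence in $\cl(d,\undl x,J)$ has a subsequence converging to a genus-$0$ stable map in class $d$ still passing through $\undl x$. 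I would then exclude every degenerate limit---bubbling, reducible configurations, and multiple covers---by a dimension count, since each such stratum is cut out by strictly fewer moduli than the $m=c_1(X)\cdot d-1$ incidence conditions impose; for generic $J$ no such stratum meets the full configuration $\undl x$. Thus the limit is again a single simple curve lying in $\cl(d,\undl x,J)$, so the set is compact and discrete, hence finite. Moreover, each element is the image of a simple map from the connected domain $S$, so it is irreducible. In the real setting all of these genericity statements are taken $\tau$-equivariantly, as in \cite{wel2005a}.

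For the singularity structure I would appeal to the structure theory of simple $J$-holomorphic curves in a four-manifold: for generic $\tau$-compatible $J$ a simple curve is an immersion whose only singularities are transversal double points. This is the genericity result of Gromov and McDuff (positivity of intersections together with automatic transversality of the normal Cauchy--Riemann operator in real dimension four), applied equivariantly to elements of $\cl(d,\undl x,J)$. Granting this, the double-point count follows from the adjunction formula: for an immersed $J$-holomorphic curve $C$ of class $d$ whose normalization has genus $g$ and which has $\delta$ transversal double points one has
\[
2g-2=d\cdot d-c_1(X)\cdot d-2\delta .
\]
Since the curves here are rational, $g=0$, and solving gives
\[
\delta=\tfrac{1}{2}\bigl(d^2-c_1(X)\cdot d+2\bigr),
\]
as claimed.

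The main obstacle will be the compactness step: one must control every degenerate limit and verify, via the dimension count for each stratum of reducible, bubbled, and multiply covered stable maps, that none can meet the entire real configuration $\undl x$ for generic $J$. Everything else is a direct application of the index computation, of the equivariant genericity for nodal immersions, and of the adjunction formula.
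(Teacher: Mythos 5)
This proposition is quoted in the paper directly from \cite{wel2005a} without any proof of its own, and your argument reproduces the standard line of reasoning behind Welschinger's result: index zero from $m=c_1(X)\cdot d-1$ plus Sard--Smale for discreteness, Gromov compactness with stratum-by-stratum dimension counts for finiteness, McDuff-type genericity (taken $\tau$-equivariantly) for the immersed-with-transverse-double-points structure, and the adjunction formula $2g-2=d^2-c_1(X)\cdot d-2\delta$ with $g=0$ for the count. Your proposal is correct and takes essentially the same approach as the cited source.
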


Assume $C\in\rb\ml^d(\undl x)$.
The real double points of $C$ are of two different kinds.
They are either non-isolated or isolated.
A real double point is called \textit{non-isolated}
if it is the local intersection of two real branches.
The real nodal point which is the local intersection of two complex conjugated branches
is called \textit{isolated}.
The \textit{mass} $m_X(C)$ is defined to be the number of its isolated real nodal points
which satisfies $0\llt m(C)\llt\delta$.
The integer
$$
W_{X_\rb}(d,s)=\sum_{C\in\rb\cl(d,\undl{x}, J)}(-1)^{m_X(C)}
$$
neither depends on the choice of $J$, $\undl{x}$, nor on the deformation class of
$X_\rb$ (c.f.\cite{wel2003,wel2005a}).
These numbers are called \textit{Welschinger invariants} of $X_\rb$.

When the real part $\rb X$ is disconnected,
let $L$ be a connected component of $\rb X$.
Suppose $f:S\to X$ is an immersed real rational $J$-holomorphic curve in $X$
such that $f(\rb S)\subset L$, for a $J\in\rb\jl_\omega$.
Denoting by $S^+$ a half of $S\setminus\rb S$,
$f(S^+)$ defines a class $[f(S^+)]$ in $H_2(X,L;\zb/2\zb)$.
There exists a well defined pairing
$$
H_2(X,L;\zb/2\zb)\times H_2(X\setminus L;\zb/2\zb)\to\zb/2\zb
$$
given by the intersection product modulo 2.
Fix a $\tau$-invariant class $F\in H _2(X\setminus L;\zb/2\zb)$.
Define the $(L,F)$-mass of $f$ as
$$
m_{L,F}(f)=m_L(f)+[f(S^+)]\cdot F,
$$
where $m_L(f)$ is the number of real isolated nodes of $f$ in $L$.
$m_{L,F}(f)$ does not depend on the chosen half of $S\setminus\rb S$.

Given $J\in\rb\jl_\omega$, the set $\rb\cl(d,\undl{x}, J)$ consists of real rational
$J$-holomorphic curves $f:S\to X$ in $X$ realizing the class $d$, passing through $\undl{x}$,
and such that $f(\rb S)\subset L$.
Note that if $r\glt1$, the condition $f(\rb S)\subset L$ is always satisfied.
Itenberg, Kharlamov and Shustin  \cite{iks2013b} observed  that
the integer
$$
W_{X_\rb,L,F}(d,s)=\sum_{C\in\rb\cl(d,\undl{x}, J)}(-1)^{m_{L,F}(C)}
$$
neither depends on the choice of $J$, $\undl{x}$, nor on the deformation class of
$X_\rb$. Note that if $F=[\rb X\setminus L]$,
$W_{X_\rb,L,F}(d,s)$ is the original Welschinger invariant.
For the simplicity of notation, we assume $\rb X$ is connected and $F=0$.
At this situation, we denote $W_{X_\rb}(d,s)$ instead of $W_{X_\rb,L,F}(d,s)$.

\subsection{Curves with tangency conditions}\label{subsec:renum}

When we use the degeneration technique to study the behavior of curves under blow-up, we
need to deal with curves with tangency conditions. In this subsection, we review some basics on curves
with tangency conditions. We use Section 2.1 of \cite{bp2014} as the reference.

Two $J$-holomorphic maps
$f_1:C_1\to X$ and $f_2:C_2\to X$ are said to be isomorphism if there is
a biholomorphism $\phi:C_1\to C_2$ such that $f_1=f_2\circ\phi$.
In the following, maps are always considered up to isomorphism.
Given a vector $\alpha=(\alpha_i)_{1\llt i<\infty}\in\zb_{\glt0}^\infty$,
we use the notation:
$$
|\alpha|=\sum_{i=1}^{+\infty}\alpha_i,  \,\,\,\,\,
I\alpha=\sum_{i=1}^{+\infty}i\alpha_i.
$$
For $k\in \mathbb{Z}_{\geq 0}$ and $\alpha = (\alpha_i)_{1\leq i <\infty}$, denote $k\alpha := (k\alpha_i)_{1\leq i <\infty}$.
Let $\delta_i$ denote the vector in $\zb_{\glt0}^\infty$ whose all coordinates are
equal to $0$ except the $i$th one which is equal to $1$.

Let $(X,\omega)$ be a compact and connected $4$-dimensional symplectic manifold,
and let $V\subset X$ be an embedded symplectic curve in $X$.
Let $d\in H_2(X;\zb)$ and $\alpha$, $\beta\in\zb_{\glt0}^\infty$ such that
$$
I\alpha+I\beta=d\cdot[V].
$$
Choose a configuration $\undl x=\undl x^\circ\sqcup\undl x_V$ of points in $X$,
with $\undl x^\circ$ a configuration of $c_1(X)\cdot d-1-d\cdot[V]+|\beta|$
points in $X\setminus V$, and $\undl x_V=\{p_{i,j}\}_{0<j\llt\alpha_i,i\glt1}$ a configuration
of $|\alpha|$ points in $V$. Given an $\omega$-tamed almost complex structure $J$ on $X$ such that
 $V$ is $J$-holomorphic,  denote by $\cl^{\alpha,\beta}(d,\undl x,J)$ the set
of rational $J$-holomorphic maps $f:\cb P^1\to X$ such that

$\bullet$ $f_*[\cb P^1]=d$;

$\bullet$ $\undl x\subset f(\cb P^1)$;

$\bullet$ $V$ does not contain $f(\cb P^1)$;

$\bullet$ $f(\cb P^1)$ has order of contact $i$ with $V$ at each points $p_{i,j}$;

$\bullet$ $f(\cb P^1)$ has order of contact $i$ with $V$ at exactly $\beta_i$ distinct points on $V\setminus\undl x_V$.

The set of simple maps in $\cl^{\alpha,\beta}(d,\undl x,J)$ is $0$-dimensional if the almost
complex structure $J$ is chosen to be generic. However, $\cl^{\alpha,\beta}(d,\undl x,J)$
might contain components of positive dimension corresponding to non-simple maps.

\begin{lem}{\cite[Lemma 11]{bp2014}}\label{lem:simap}
Suppose that $\beta=(d\cdot[V],0,\cdots )$ and $\alpha=0$, or $\beta=(d\cdot[V]-1,0,\cdots)$ and $\alpha=(1,0,\cdots)$.
Then for a generic choice of $J$, the set $\cl^{\alpha,\beta}(d,\undl x,J)$ only contains
simple maps.
\end{lem}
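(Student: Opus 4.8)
The plan is to rule out multiply covered maps by a dimension count on their images. Suppose, for a contradiction, that for generic $J$ the set $\cl^{\alpha,\beta}(d,\undl x,J)$ contains a non-simple map $f:\cb P^1\to X$. By the standard factorisation of multiply covered pseudo-holomorphic spheres, $f=g\circ\phi$ for a simple $J$-holomorphic map $g:\cb P^1\to X$ and a holomorphic branched covering $\phi:\cb P^1\to\cb P^1$ of degree $k\geq 2$. Setting $d'=g_*[\cb P^1]$ we get $d=k\,d'$. In either case of the lemma the configuration $\undl x^\circ$ consists of $c_1(X)\cdot d-1$ (resp. $c_1(X)\cdot d-2$) points, so this count is non-negative, whence $c_1(X)\cdot d\geq 1$ and therefore $c_1(X)\cdot d'=\frac{1}{k}\,c_1(X)\cdot d>0$. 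Since $c_1(X)\cdot d'>0$ bounds $k\leq c_1(X)\cdot d$, only finitely many classes $d'$ with $k\,d'=d$, $k\geq 2$, can occur, so a single generic $J$ may be asked to avoid the bad locus for all of them.

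The key observation is that $f$ and $g$ have the same image $C:=f(\cb P^1)=g(\cb P^1)$, a simple rational curve of class $d'$. Because $\alpha$ and $\beta$ have the prescribed shape, only their first entries are nonzero, so $|\alpha|+|\beta|=I\alpha+I\beta=d\cdot[V]$, and the total number of point conditions is
$$
\#\undl x=\bigl(c_1(X)\cdot d-1-d\cdot[V]+|\beta|\bigr)+|\alpha|=c_1(X)\cdot d-1=k\,c_1(X)\cdot d'-1.
$$
As $\undl x\subset f(\cb P^1)=C$, the curve $C$ of class $d'$ must pass through all $k\,c_1(X)\cdot d'-1$ points of $\undl x$. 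I would then run the dimension count: for generic $J$ making $V$ holomorphic, the space of simple unparametrised rational $J$-holomorphic curves of class $d'$ is a manifold of real dimension $2(c_1(X)\cdot d'-1)$, and by Sard--Smale together with the transversality of the evaluation maps (evaluating into $X$ at the points of $\undl x^\circ$, and, once $J$ is transverse to $V$, into $V$ at the points of $\undl x_V$), passing through each prescribed point is a condition of real codimension $2$. Hence, for a generic configuration $\undl x$, the simple curves of class $d'$ through all of $\undl x$ form a space of expected dimension
$$
2(c_1(X)\cdot d'-1)-2\bigl(k\,c_1(X)\cdot d'-1\bigr)=-2(k-1)\,c_1(X)\cdot d',
$$
which is negative for $k\geq 2$ since $c_1(X)\cdot d'>0$. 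Thus no such $C$ exists, contradicting the existence of $f$, and so $\cl^{\alpha,\beta}(d,\undl x,J)$ contains only simple maps. Note that the tangency conditions are not even needed for this direction: imposing them can only shrink the set, and the point count alone already forces emptiness.

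The main obstacle is the transversality input underlying the codimension-$2$ count, and in particular the point of $\undl x_V$ lying on $V$ in the second case. One must verify that, for generic $J$ for which $V$ is $J$-holomorphic, the simple curves of class $d'$ still form a manifold of the expected dimension and that evaluation at a generic point of $V$ continues to drop the dimension by two, so that $V$ being $J$-holomorphic does not render passage through a point of $V$ automatic. This is exactly the relative transversality set up in Section 2.1 of \cite{bp2014}; granting it, the displayed inequality closes the argument. Finally, the tangency profiles intervene only through the identity $|\alpha|+|\beta|=d\cdot[V]$, which forces the number of point conditions to attain its maximal value $c_1(X)\cdot d-1$; it is precisely this maximality that makes the count strictly negative for every $k\geq 2$, and hence singles out the two cases of the lemma.
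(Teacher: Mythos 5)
Your argument is correct: the paper gives no proof of this lemma (it is imported verbatim from \cite[Lemma 11]{bp2014}), but your point count --- a non-simple map of class $d=kd'$, $k\geq 2$, would force its underlying simple curve of class $d'$ to pass through all $c_1(X)\cdot d-1=k\,c_1(X)\cdot d'-1>c_1(X)\cdot d'-1$ points of $\undl x$, which is impossible in the expected dimension $2(1-k)\,c_1(X)\cdot d'<0$ for generic $J$ --- is exactly the argument the paper itself runs in the analogous Lemma \ref{lem:finite2} and in the non-simplicity steps of Propositions \ref{prop:sum11} and \ref{prop:wall1}. The one input you defer, transversality (including evaluation at the point of $\undl x_V$ on $V$) for $J$ generic among those keeping $V$ holomorphic, is precisely what the relative setup of \cite[Section 2.1]{bp2014} supplies, so the proof is complete as written.
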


\begin{prop}{\cite[Proposition 13]{bp2014}}\label{prop:finite}
Suppose that $V$ is an embedded symplectic sphere with $[V]^2=-1$, and that $|\beta|\glt d\cdot[V]-1$.
Then for a generic choice of $J$, the set $\cl^{\alpha,\beta}(d,\undl x,J)$ contains finitely many
simple maps. As a consequence, the set
$$
\cl^{\alpha,\beta}_*(d,\undl x,J)=\{f(\cb P^1)|(f:\cb P^1\to X)\in\cl^{\alpha,\beta}(d,\undl x,J)\}
$$
is also finite.
\end{prop}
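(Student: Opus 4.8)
The plan is to combine the $0$-dimensionality of the simple locus (already recorded in the paragraph preceding Lemma~\ref{lem:simap}) with Gromov compactness, and to exclude degenerate limits by a dimension count that uses both hypotheses $[V]^2=-1$ and $|\beta|\glt d\cdot[V]-1$ simultaneously. Since for generic $J$ the simple maps in $\cl^{\alpha,\beta}(d,\undl x,J)$ form a $0$-dimensional, hence discrete, set, finiteness is equivalent to compactness: it suffices to show that any sequence $(f_n)$ of pairwise distinct simple maps in $\cl^{\alpha,\beta}(d,\undl x,J)$ has a subsequence converging to a point of the same set. As the symplectic area $\omega\cdot d$ is fixed, Gromov compactness yields, after passing to a subsequence, a stable genus-$0$ limit $f_\infty:C_\infty\to X$ in class $d$ which still passes through $\undl x$ and whose intersection pattern with $V$ is a degeneration of the profile $(\alpha,\beta)$. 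The whole argument then reduces to showing that $f_\infty$ is again an \emph{irreducible} simple map lying in $\cl^{\alpha,\beta}(d,\undl x,J)$: by discreteness the sequence is then eventually constant, which forces finiteness of the simple locus, and the finiteness of the image set $\cl^{\alpha,\beta}_*(d,\undl x,J)$ follows at once.

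To analyse $f_\infty$, I would split $C_\infty$ into its \emph{vertical} part $C'$ (components whose image lies in $V$) and its \emph{horizontal} part $C''$ (the remaining components), writing $(f_\infty)_*[C']=m[V]$ with $m\glt0$ and $(f_\infty)_*[C'']=d-m[V]$. This is where $[V]^2=-1$ enters. By the adjunction formula for the embedded symplectic sphere $V$ one has $c_1(X)\cdot[V]=[V]^2+2=1$, so $c_1(X)\cdot(d-m[V])=c_1(X)\cdot d-m$, while positivity of intersection gives $(d-m[V])\cdot[V]=d\cdot[V]+m$; peeling off $m$ copies of the exceptional sphere therefore lowers the available dimension by $m$ and simultaneously \emph{raises} the forced intersection of $C''$ with $V$ by $m$. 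All $N^\circ:=c_1(X)\cdot d-1-d\cdot[V]+|\beta|$ points of $\undl x^\circ$ lie off $V$ and hence on $C''$. For generic $J$ each horizontal component is an honest (simple or multiply covered) $J$-curve whose image may be required to pass through at most $c_1(X)\cdot(\text{its class})-1$ generic points; summing over the $\nu\glt1$ point-carrying horizontal components gives
\begin{equation*}
N^\circ\;\llt\;c_1(X)\cdot(d-m[V])-\nu\;=\;c_1(X)\cdot d-m-\nu .
\end{equation*}
Substituting $N^\circ$ and invoking $|\beta|\glt d\cdot[V]-1$ converts this into
\begin{equation*}
m+\nu\;\llt\;1+\bigl(d\cdot[V]-|\beta|\bigr)\;\llt\;2 ,
\end{equation*}
so only $(m,\nu)\in\{(0,1),(0,2),(1,1)\}$ survive: at most one copy of $V$ can bubble off, and $C''$ has at most two point-carrying components.

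The remaining, and main, difficulty is to eliminate the two boundary cases $(0,2)$ and $(1,1)$ together with any point-free bubbles, leaving only $(0,1)$, i.e.\ an irreducible horizontal curve with no vertical part. For this I would bring in the contact data ignored so far. In both extremal cases the point constraints already make every horizontal component rigid, so imposing in addition the prescribed orders of contact with $V$, and matching the contact multiplicities created at the nodes where a vertical copy of $V$ is attached (which by $[V]^2=-1$ absorb exactly the extra $+m$ intersection), is an over-determined system, cut out with strictly negative expected dimension for generic $J$; the corresponding strata are therefore empty, and in the purely transverse sub-profiles this is already the content of Lemma~\ref{lem:simap}. The hard part is precisely this bookkeeping of how the $|\alpha|+|\beta|$ prescribed contact points and their multiplicities redistribute in the limit, the step in which the identity $I\alpha+I\beta=d\cdot[V]$, the bound $|\beta|\glt d\cdot[V]-1$ (which forces $I\alpha\llt1$), and the value $[V]^2=-1$ are all used at once; it is exactly what forbids a sequence of simple maps from converging to a reducible or multiply covered configuration. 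Once $(m,\nu)=(0,1)$ and no point-free bubbles survive, $f_\infty$ is an irreducible simple map in $\cl^{\alpha,\beta}(d,\undl x,J)$, which completes the compactness argument and hence the proof.
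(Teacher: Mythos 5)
Note first that the paper does not actually prove this statement: Proposition~\ref{prop:finite} is imported verbatim from Brugall\'e--Puignau \cite[Proposition 13]{bp2014}, so the only in-paper argument available for comparison is the proof of the analogous Lemma~\ref{lem:finite2}, which treats two disjoint $(-1)$-spheres under the stronger hypothesis $|\beta^i|=d\cdot[V_i]$ (forcing $\alpha^i=0$, i.e.\ no tangency or prescribed-point conditions along the $V_i$ at all). Your overall strategy --- discreteness of the simple locus for generic $J$, Gromov compactness, splitting the limit into vertical and horizontal parts, adjunction giving $c_1(X)\cdot[V]=1$, and a point count --- is exactly the strategy of that proof and of the cited source, and your inequality $m+\nu\llt 1+(d\cdot[V]-|\beta|)\llt 2$ is the correct analogue of the inequality $0\glt m+k^1+k^2-1$ obtained there.

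There is, however, a genuine gap, and you have located it yourself. Under the weaker hypothesis $|\beta|\glt d\cdot[V]-1$ the point count does \emph{not} close the argument, and eliminating the surviving configurations is precisely the substance of the proposition. Concretely: (i) the cases $(m,\nu)=(1,1)$ and $(0,2)$, and the point-free bubbles, are dismissed only by the assertion that the contact conditions cut out strata of ``strictly negative expected dimension''; this is a transversality statement for strata of stable maps with prescribed tangencies along $V$ that requires its own proof (it is where \cite{bp2014} invokes the structure of limits of relative curves), and nothing in your write-up supplies it. (ii) Your count does not even exclude an \emph{irreducible non-simple} limit: if $|\beta|=d\cdot[V]-1$ and $f_\infty$ is a $\delta$-fold cover of a simple $f_0$ with $c_1(X)\cdot (f_0)_*[\cb P^1]=1$, the resulting inequality $(\delta-1)\,c_1(X)\cdot (f_0)_*[\cb P^1]\llt d\cdot[V]-|\beta|$ is satisfied with equality for $\delta=2$, so such a limit survives the dimension count and must be ruled out using the tangency profile --- yet your final step ``once $(m,\nu)=(0,1)$ \dots $f_\infty$ is an irreducible simple map'' silently assumes this is done. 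In Lemma~\ref{lem:finite2} the hypothesis $|\beta^i|=d\cdot[V_i]$ makes the right-hand side of the count equal to $1$, so every degenerate case dies at once; that shortcut is not available here. As written, your argument is a faithful outline of the proof in \cite{bp2014}, but the step you label ``the hard part'' is the proof, and it is missing.
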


In particular, suppose that $X=\cb P^2$, $V=H\subset \mathbb{C}P^2$ is the hyperplane in $\cb P^2$,
and $|\undl x^0|=1$, the set $\cl^{\alpha,\beta}(d,\undl x,J)$
is always finite and made of simple maps.

\begin{lem}\label{lem:p2set}
Suppose that $X=\cb P^2$ and $V=H\subset \mathbb{C}P^2$ is the hyperplane in $\mathbb{C}P^2$. Then the set $\cl^{\alpha,\beta}(d,\undl x,J)$
with $|\undl x^0|=1$ is empty for a generic choice of $J$, except
$\cl^{\delta_1,0}([H],\{p\}\cup\undl x_V,J)$ which contains an unique element.
Moreover, this unique element is an embedding.
\end{lem}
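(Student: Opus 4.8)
The plan is to reduce the assertion to an elementary degree count special to $\cb P^2$, and then to identify the single surviving moduli space with the classical pencil of lines. Throughout, finiteness and simplicity of the relevant sets are already guaranteed by Lemma~\ref{lem:simap} and the discussion preceding the statement, so only emptiness, uniqueness and embeddedness remain to be checked.

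First I would run the numerical analysis. Writing $d=k[H]$ for an integer $k$, we have $c_1(X)\cdot d=3k$ and $d\cdot[V]=k$, since $[V]=[H]$ and $[H]^2=1$. By definition the configuration $\undl x^\circ$ consists of $c_1(X)\cdot d-1-d\cdot[V]+|\beta|=2k-1+|\beta|$ points, so the hypothesis $|\undl x^\circ|=1$ forces $2k+|\beta|=2$. As $c_1(X)\cdot d>0$ gives $k\glt1$ and $|\beta|\glt0$, the only solution is $k=1$ and $|\beta|=0$, that is $d=[H]$ and $\beta=0$. Substituting into the compatibility relation $I\alpha+I\beta=d\cdot[V]=1$ yields $I\alpha=1$, hence $\alpha=\delta_1$. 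Therefore the constraint $|\undl x^\circ|=1$ singles out the triple $([H],\delta_1,0)$ uniquely; for any other $(d,\alpha,\beta)$ it cannot hold, so the corresponding $\cl^{\alpha,\beta}(d,\undl x,J)$ does not occur among the sets under consideration and is empty.

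Next I would treat the surviving case $\cl^{\delta_1,0}([H],\{p\}\cup\undl x_V,J)$, where $\undl x_V=\{p_{1,1}\}\subset H$ and $p\in\cb P^2\setminus H$, so that $p\neq p_{1,1}$. An element is a $J$-holomorphic sphere $f:\cb P^1\to\cb P^2$ in class $[H]$, not contained in $H$, passing through $p$ and $p_{1,1}$, meeting $H$ with order of contact $1$ at $p_{1,1}$ and at no other point. Since $f(\cb P^1)\cdot[H]=[H]^2=1$ and $f(\cb P^1)\not\subset H$, positivity of intersections forces $f(\cb P^1)$ to meet $H$ in a single point of multiplicity one; once $p_{1,1}\in f(\cb P^1)$ is imposed, this point must be $p_{1,1}$, so the prescriptions $\alpha=\delta_1$ and $\beta=0$ hold automatically. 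Thus the only remaining requirement is that the $J$-line pass through the two distinct points $p$ and $p_{1,1}$. The key step is the existence and uniqueness of such a $J$-line: for any $\omega$-tamed $J$ on $\cb P^2$ the class $[H]$ is represented through any two distinct points by an embedded $J$-holomorphic sphere, and this representative is unique (two distinct $J$-lines meet in exactly one point by positivity of intersections, equivalently the degree-one genus-zero Gromov--Witten number with two point constraints equals $1$). This produces the unique element.

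The embeddedness assertion is then immediate from the adjunction formula: the number of double points of a rational curve in class $[H]$ is $\tfrac12\bigl([H]^2-c_1(X)\cdot[H]+2\bigr)=\tfrac12(1-3+2)=0$, so the unique curve is smoothly embedded. The only genuinely delicate point in the whole argument is the uniqueness of the $J$-line for a general $\omega$-tamed $J$ rather than the integrable complex structure; this rests on positivity of intersections together with the standard structure theory of $J$-holomorphic lines in $\cb P^2$, whereas every other step is either arithmetic or a direct application of adjunction.
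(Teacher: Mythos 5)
Your proposal is correct and follows essentially the same route as the paper: the same dimension/degree count forcing $d=[H]$, $\alpha=\delta_1$, $\beta=0$, uniqueness of the $J$-line through two points via positivity of intersections (the paper gets existence by deforming from $J_{st}$, you cite the equivalent standard fact that the degree-one two-point Gromov--Witten count is $1$), and embeddedness from adjunction. The only cosmetic difference is that you rule out $k=0$ via $c_1(X)\cdot d>0$ where the paper uses positivity of intersection with $V$ together with $a\glt|\beta|$; both are valid.
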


\begin{proof}
Suppose that $d = a[H], a\geq 0$.
Since $c_1(X)=3[H]$, we have
$$
c_1(X)\cdot(a[H])-1-(a[H])\cdot H+|\beta|=2a-1+|\beta|.
$$
Suppose that $2a-1+|\beta|=1$ and $\cl^{\alpha,\beta}(a[H],\undl x,J)\neq\emptyset$,
where $|\undl x^0|=1$.
From $I\alpha+I\beta=d\cdot [V] = a[H]\cdot [V] =a$, we can get $a[H]\cdot [H]=a\glt|\beta|$.
The intersection number $a$ of $J$-holomorphic curve $f\in\cl^{\alpha,\beta}(a[H],\undl x,J)$
with $V$ must satisfy $a\glt0$. So we obtain $a=1$, $|\beta|=0$.

If $\cb P^2$ is equipped with the symplectic form $\omega_{FS}$ and
its standard complex structure $J_{st}$, it is well-known that
$\cl^{\delta_1,0}([H],\{p\}\cup\undl x_V,J_{st})$ consists of a unique element.
When $\omega$ and $J$ are both varied, the corresponding set still contains at least one element.
If there are two distinct curves $C_1$ and $C_2$ in $\cl^{\delta_1,0}([H],\{p\}\cup\undl x_V,J)$,
then both $C_1$ and $C_2$ pass through ${p}\bigcup x_V$ which contains at least two points.
Therefore,  by the positivity of intersections, $C_1\cdot C_2 = 2$. This is impossible because $C_1\cdot C_2 = [H]\cdot [H] =1$.

 This contradiction implies that $\cl^{\delta_1,0}(H,\{p\}\cup\undl x_V,J)$ also consists of a unique element.
Thanks to the adjunction formula, this $J$-holomorphic curve is an embedding curve.
\end{proof}

Let $\tilde{\mathbb{C}P}^2$ be the blow-up of $\cb P^2$ at a point,
and $E$  the exceptional divisor.
It is easy to see $\tilde{\mathbb{C}P}^2\cong\pb_E(\ol(-1)\oplus\ol)$.
Let $E_0:=\pb_E(0\oplus\ol)$, $E_\infty:=\pb_E(\ol(-1)\oplus0)$.
$E_0$ and $E_\infty$ are two distinguished non-intersecting sections of $\pb_E(\ol(-1)\oplus\ol)$.
One computes easily that
$$
[E_\infty]^2=-[E_0]^2=1.
$$
The group $H_2(\tilde{\mathbb{C}P}^2,\zb)$ is the free abelian group generated by $[E_\infty]$ and $[F]$,
where $F$ is a fiber of $\pb_E(\ol(-1)\oplus\ol)\to E$.
The first Chern class of $\tilde{\mathbb{C}P}^2$ is given by
$$
c_1(\tilde{\mathbb{C}P}^2)=3[E_\infty]-[E_0]=2[E_\infty]+[F].
$$

In $X=\tilde{\mathbb{C}P}^2\cong\pb_E(\ol(-1)\oplus\ol)$, $V=E_\infty$, and $|\undl x^0|=0$, the set
$\cl^{\alpha,\beta}(d,\undl x,J)$ is always finite and made of simple maps.

\begin{lem}\label{lem:tp2set}
Suppose that $X=\tilde{\mathbb{C}P}^2$ and $V=E_\infty$. Then the set $\cl^{\alpha,\beta}(d,\undl x,J)$
with $|\undl x^0|=0$ is empty for a generic choice of $J$, except
$\cl^{\delta_1,0}([F],\undl x_V,J)$ which contains an unique element.
Moreover, the unique element is an embedding.
\end{lem}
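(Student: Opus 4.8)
The plan is to imitate the proof of Lemma~\ref{lem:p2set}, replacing the elementary geometry of $\cb P^2$ by that of the Hirzebruch surface $\tilde{\mathbb{C}P}^2\cong\pb_E(\ol(-1)\oplus\ol)$. First I would fix the basis $[E_\infty]$, $[F]$ of $H_2(\tilde{\mathbb{C}P}^2;\zb)$ and record the intersection data
$$
[E_\infty]^2=1,\qquad [F]^2=0,\qquad [E_\infty]\cdot[F]=1,\qquad [E_0]=[E_\infty]-[F].
$$
Writing a class as $d=m[E_\infty]+n[F]$ and using $c_1(\tilde{\mathbb{C}P}^2)=2[E_\infty]+[F]$, one computes $d\cdot[V]=m+n$, $d\cdot[F]=m$ and $c_1(\tilde{\mathbb{C}P}^2)\cdot d=3m+2n$. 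Since the number $|\undl x^0|=|\undl x^\circ|=c_1\cdot d-1-d\cdot[V]+|\beta|$ of points lying off $V$ vanishes, the defining equality becomes
$$
2m+n+|\beta|=1.
$$

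Next I would pin down the finitely many admissible data by positivity of intersections. As $V=E_\infty$ is $J$-holomorphic and a simple $f$ in the set has image neither equal to $V$ nor to a fiber, intersecting with $V$ and with a ruling fiber gives $m+n=d\cdot[V]\glt0$ and $m=d\cdot[F]\glt0$. From $I\alpha+I\beta=m+n$ together with $I\beta\glt|\beta|$ one obtains $m+n\glt|\beta|$, so $m=(2m+n)-(m+n)\llt(1-|\beta|)-|\beta|=1-2|\beta|$; combined with $m\glt0$ this forces $|\beta|=0$, hence $\beta=0$ and $2m+n=1$ with $0\llt m\llt1$. The two resulting cases are $m=0$, $n=1$, $I\alpha=1$, i.e. $\alpha=\delta_1$ and $d=[F]$; and $m=1$, $n=-1$, $\alpha=0$ and $d=[E_0]$. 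In the second case $d\cdot[V]=0$, so the curve is the exceptional section $E_0$, which is disjoint from $V$ and carries no tangency data; as we are enumerating curves that meet $V$ (those with $d\cdot[V]\glt1$, the only ones contributing to the degeneration), this case is discarded. Hence the only surviving set is $\cl^{\delta_1,0}([F],\undl x_V,J)$, where $\undl x_V=\{p\}$ with $p\in V$.

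Finally I would settle uniqueness and embeddedness for the fiber case exactly as in Lemma~\ref{lem:p2set}. Since $[F]^2=0$, $c_1\cdot[F]=2$ and $[F]\cdot[E_\infty]=1$, for generic $J$ there is a $J$-holomorphic rational curve in class $[F]$ through $p$ meeting $V$ only at $p$ with contact order $1$, namely the fiber of the ruling through $p$. If two distinct curves $C_1$, $C_2$ lay in $\cl^{\delta_1,0}([F],\{p\},J)$, then both pass through $p$, whence $C_1\cdot C_2\glt1$ by positivity of intersections, contradicting $C_1\cdot C_2=[F]^2=0$; so the element is unique. The adjunction formula $c_1\cdot[F]=2+[F]^2-2\delta$ gives $\delta=0$, so this unique curve is embedded.

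The step I expect to be delicate is the exclusion of the exceptional class $[E_0]$: the purely numerical constraints leave it on exactly the same footing as $[F]$, and it is genuinely represented by a $J$-holomorphic $(-1)$-curve, so it cannot be ruled out by dimension or positivity alone. Its removal rests on the geometric input that $E_0$ is disjoint from $V=E_\infty$ and hence irrelevant to the tangency enumeration (equivalently, one restricts to $d\cdot[V]\glt1$). Everything else is the same positivity-and-adjunction bookkeeping used in the $\cb P^2$ case.
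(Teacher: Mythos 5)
Your argument is correct and follows the same intersection-theoretic bookkeeping as the paper's proof: expand $d$ in the basis $[E_\infty]$, $[F]$, use $c_1(\tilde{\mathbb{C}P}^2)=2[E_\infty]+[F]$ to turn $|\undl x^\circ|=0$ into $2m+n+|\beta|=1$, and cut down the possibilities by positivity of intersections, finishing the $[F]$ case by the $[F]^2=0$ uniqueness argument and adjunction exactly as in Lemma~\ref{lem:p2set}. The one step where you genuinely diverge is the positivity input, and your version is in fact the more careful one. The paper deduces $n\glt 0$ (its $b$) by asserting $d\cdot[E_0]\glt0$ ``by positivity of intersection''; but that inequality fails precisely when the curve under consideration is itself the $J$-holomorphic $(-1)$-sphere in class $[E_0]$, which exists for generic $J$ (it persists under perturbation since $c_1\cdot[E_0]=1$ gives automatic regularity) and satisfies every numerical constraint of the lemma with $\alpha=\beta=0$ and $|\undl x^\circ|=0$. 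You only use $d\cdot[E_\infty]\glt0$ and $d\cdot[F]\glt0$, which correctly leaves $d=[E_0]$ on the table, and you rightly identify that it can only be discarded by the geometric observation that this curve is disjoint from $V=E_\infty$ --- equivalently, by restricting the lemma to classes with $d\cdot[V]\glt1$, which is the only way it is used in Propositions~\ref{prop:sum12} and \ref{prop:wall3}, where every component of $C_+$ must meet $V$ by connectedness of the limit curve. So the ``delicate point'' you flag is not a defect of your write-up but a minor gap in the stated lemma and in the paper's own proof; with the $d\cdot[V]\glt1$ proviso made explicit, your argument is complete.
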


\begin{proof}
Suppose $ d= a[E_\infty ] + b [F]$. Since $c_1(\tilde{\mathbb{C}P}^2)=2[E_\infty]+[F]$, we get
$$
c_1(X)\cdot d-1-d\cdot[E_\infty]+|\beta|=2a+b-1+|\beta|.
$$
Suppose that $2a+b-1+|\beta|=0$ and $\cl^{\alpha,\beta}(a[E_\infty]+b[F],\undl x_V,J)\neq\emptyset$.
Since $|\beta|\llt a+b$ and $|\beta|\glt0$, we have $a+b\glt0$.
By the positivity of intersection, we obtain
\begin{eqnarray*}
d\cdot [E_0] & = & (a[E_\infty]+b[F])\cdot([E_0])=b\glt0\\
& & \\
d\cdot [F] & = & (a[E_\infty]+b[F])\cdot[F]=a\glt0.
\end{eqnarray*}
We can deduce that $a=|\beta|=0$, $b=|\alpha|=1$.

The proof of the reminder of this lemma is similar to that in the proof of Lemma \ref{lem:p2set}.

\end{proof}

\section{Blow-up formula of Welschinger invariants}

\subsection{Blow-up formula at a real point}\label{subsec:bpform}
In this subsection, we consider the behavior of Welschinger invariants under the blow-up of symplectic
$4$-manifold at a real point.

Let $X$ be a compact real symplectic $4$-manifold. Perform a real symplectic cut on $X$ at the real point $x\in\rb X$.
We can get two real symplectic $4$-manifolds $\bar{X}^+\cong\pb^2$ and $\bar{X}^-\cong X_{1,0}$
which contain a common real symplectic submanifold $V$.
In $\bar{X}^+$, $V\cong H$  is the hyperplane in $\pb^2$.
In $\bar{X}^-$, $V\cong E$ is the exceptional divisor in $X_{1,0}$.

Let $\pi:\zl\to\Delta$ be the real symplectic sum of $\bar{X}^+$ and $\bar{X}^-$ along $V$,
$d\in H_2(\zl_\lambda;\zb)$. Choose $\undl x(\lambda)$ a set of $c_1(X)\cdot d-1$ real
symplectic sections $\Delta\to\zl$ such that $\undl x(0)\cap V=\emptyset$.
Choose an almost complex structure $J$ on $\zl$ tamed by $\omega_\zl$, which restrict to an
almost structure $J_\lambda$ tamed by $\omega_\lambda$ on each fiber $\zl_\lambda$, and generic with
respect to all choices we made.

Let $X_\sharp= \bar{X}^+\cup_V \bar{X}^-$.
Denote $\cl(d,\undl x(0),J_0)$ to be the set $\{\bar f:\bar C\to X_\sharp\}$ of limits, as stable maps,
of maps in $\cl(d,\undl x(\lambda), J_\lambda)$ as $\lambda$ goes to $0$,
where $\cl(d,\undl x(\lambda), J_\lambda)$ is the set of all irreducible rational $J$-holomorphic curves in
$(\zl_\lambda,\omega_\lambda,J_\lambda)$ passing through all points in $\undl x(\lambda)$, realizing the
class $d$. From \cite[Section $3$]{ip2004}, we know $\bar C$ is a connected nodal rational curve such that:

$\bullet$ $\undl x(0)\subset\bar f(\bar C)$;

$\bullet$ any point $p\in\bar f^{-1}(V)$ is a node of $\bar C$ which is the intersection of two
irreducible components $\bar C'$ and $\bar C''$ of $\bar C$, with $\bar f(\bar C')\subset X^+$
and $\bar f(\bar C'')\subset X^-$;

$\bullet$ if in addition neither $\bar f(\bar C')$ nor $\bar f(\bar C'')$ is entirely mapped into $V$,
then the multiplicities of intersection of both $\bar f(\bar C')$ and $\bar f(\bar C'')$ with $V$ are equal.

Given an element
$\bar f:\bar C\to X_\sharp$ of $\cl(d,\undl x(0),J_0)$, denote by $C_*$, $*=+,-$, the union of the irreducible components of
$\bar C$ mapped into $\bar{X}^*$.

\begin{prop}\label{prop:sum11}
Assume $\undl x(0)\cap \bar{X}^+$ contains at most one point,
$\undl x(0)\cap \bar{X}^-\neq\emptyset$ if $\undl x(0)\cap \bar{X}^+\neq\emptyset$.
Then for a generic $J_0$,
the set $\cl(d,\undl x(0),J_0)$ is finite, and only depends on $\undl x(0)$ and $J_0$.
Given an element $\bar f:\bar C\to X_\sharp$  of $\cl(d,\undl x(0),J_0)$,
the restriction of $\bar f$ to any component of $\bar C$ is a simple map,
and no irreducible component of $\bar C$ is entirely mapped into $V$.
Moreover the following are true:
\begin{enumerate}
\item If $\undl x(0)\cap \bar{X}^+=\emptyset$, then $C_+$ is empty.
The curve $C_-$ is irreducible,
and $\bar f|_{C_-}$ is an element of $\cl^{0,0}(p^!d,\undl x(0)\cap \bar{X}^-, J_0)$.
The map $\bar f$ is the limit of a unique element of
$\cl(d,\undl x(\lambda),J_\lambda)$ as $\lambda$ goes to $0$.
\item If $\undl x(0)\cap \bar{X}^+=\{p\}$, then $C_+$ is irreducible and
$\bar f(C_+)$ realizes class $[H]$.
The curve $C_-$ is irreducible and $\bar f|_{C_-}$ is an element of
$\cl^{0,\delta_1}(p^!d-[E],\undl x(0)\cap \bar{X}^-,J_0)$. The map $\bar f$ is the limit of a unique element of
$\cl(d,\undl x(\lambda),J_\lambda)$ as $\lambda$ goes to $0$.
\end{enumerate}
\end{prop}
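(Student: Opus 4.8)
The plan is to combine the structure of limit stable maps recalled above (following \cite{ip2004}) with a dimension count that is controlled by the hypothesis $\left|\undl x(0)\cap\bar X^+\right|\llt 1$, and then to identify the surviving pieces by means of Lemma \ref{lem:p2set}. Write a limit $\bar f:\bar C\to X_\sharp$ as $\bar C=C_+\cup C_-$; let $c_\pm$ be the number of irreducible components of $C_\pm$, let $d_\pm$ be the total class carried by $C_\pm$, set $m:=d_+\cdot[H]=d_-\cdot[E]$ for the total contact multiplicity with $V$, and let $\ell$ be the number of nodes of $\bar C$ lying over $V$ and $i$ the number of the remaining nodes. First I would record the two inputs to the count: a connected component of $C_+$ of degree $a$ is a relative curve to $(\pb^2,H)$ whose relative moduli space has complex dimension $2a-1+t$ where $t$ is its number of contact points, a component of $C_-$ of class $d'$ with $t$ contact points has relative dimension $c_1(X_{1,0})\cdot d'-1-d'\cdot[E]+t$, and the symplectic-sum Chern relation $c_1(\pb^2)\cdot d_++c_1(X_{1,0})\cdot d_-=c_1(X)\cdot d+2m$ holds (this can be checked directly from $c_1(X_{1,0})=p^*c_1(X)-[E]$).

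The heart of the argument is the resulting balance. Summing the relative dimensions, subtracting one matching condition for each of the $\ell+i$ nodes, and imposing the $c_1(X)\cdot d-1$ point conditions of $\undl x(0)$, all the Chern data cancel and the expected dimension of the stratum collapses to
\[
 e=1-c_+-c_-+\ell-i .
\]
Since $\bar C$ is a limit of rational, hence genus $0$, curves, its dual graph is a tree and all its nodes lie over $V$; thus $i=0$ and $\ell=c_++c_--1$, and this already forces $C_-$ to be irreducible in each case. To control $C_+$ I would argue component-wise: a degree-$a$ component of $C_+$ can be rigidified through its contact points only along its $t$ relative directions, so it must absorb at least $2a-1$ of the point conditions lying in $\bar X^+$; summing over the components of $C_+$ gives $\left|\undl x(0)\cap\bar X^+\right|\glt 2m-c_+$, whence $c_+\llt 1$ and, when $c_+=1$, $m=1$. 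Therefore $C_+$ is either empty or a single line meeting $V$ transversally at one point. The first structural bullet $\undl x(0)\subset\bar f(\bar C)$ forbids the point of $\undl x(0)\cap\bar X^+$ from remaining uncovered, so $C_+=\emptyset$ precisely when $\undl x(0)\cap\bar X^+=\emptyset$; in that case $\ell=m=0$ and $\bar f|_{C_-}\in\cl^{0,0}(p^!d,\undl x(0)\cap\bar X^-,J_0)$, whereas when $\undl x(0)\cap\bar X^+=\{p\}$ there is exactly one transversal node, $d_-=p^!d-[E]$, and $\bar f|_{C_-}\in\cl^{0,\delta_1}(p^!d-[E],\undl x(0)\cap\bar X^-,J_0)$; the hypothesis $\undl x(0)\cap\bar X^-\neq\emptyset$ is what guarantees that the contact point, and hence the line $C_+$, is actually pinned down.

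Once the combinatorial type is fixed, the single line $C_+$ is identified and shown to be an embedding by Lemma \ref{lem:p2set} (applied with the contact point as the marked point on $V$), the simplicity of $\bar f|_{C_-}$ follows from Lemma \ref{lem:simap} (the relevant $\beta$ being $(m,0,\dots)$), and the absence of any component mapped into $V$ together with the genericity of the relative evaluation maps is supplied by \cite{ip2004}; finiteness of $\cl(d,\undl x(0),J_0)$, and hence its dependence on $\undl x(0)$ and $J_0$ alone, is then Proposition \ref{prop:finite}. It remains to show that each such $\bar f$ is the limit of a \emph{unique} curve in the nearby fibers. Because every node over $V$ is a transversal (order one) contact and the relative evaluation maps meet transversally for generic $J_0$, the gluing theorem for the symplectic sum \cite{ip2004} produces, for each small $\lambda$, exactly one smoothing; in case (1) there is no node over $V$ and the statement reduces to the persistence of the isolated curve $C_-\subset\bar X^-$.

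I expect the main obstacle to be the coupled, component-wise rigidity step: one must justify that each contact with $V$ contributes at most one rigidifying direction, equivalently that the relative evaluation maps are transversal for generic $J_0$, so that the inequality $\left|\undl x(0)\cap\bar X^+\right|\glt 2m-c_+$ is legitimate and no component of $C_+$ can have degree $\glt 1$ or lie inside $V$. This same transversality underlies the uniqueness of the smoothing, so both delicate points rest on the genericity furnished by \cite{ip2004}; granting it, the remaining steps are the bookkeeping above.
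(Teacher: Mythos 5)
Your overall strategy is the one the paper uses: take the limit stable map, rule out components inside $V$ via \cite{ip2004}, run an index/dimension count to pin down the combinatorial type, identify $C_+$ with Lemma \ref{lem:p2set}, get finiteness from the relative moduli spaces, and get uniqueness of the smoothing from the order-one contact in the gluing analysis of \cite{ip2004}. The bookkeeping itself is fine (the Chern relation $c_1(\pb^2)\cdot d_++c_1(X_{1,0})\cdot d_-=c_1(X)\cdot d+2m$ checks out, and $e=-2i$ does force $i=0$ for generic $J_0$).

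There is, however, one step that fails as written: your expected-dimension formula for each component, and hence the collapse to $e=1-c_+-c_-+\ell-i$, is only valid when every component of $\bar C$ is a \emph{simple} map. Transversality cannot be achieved for multiply covered components by perturbing $J$ alone, so a stratum containing a degree-$\delta\glt 2$ cover of a simple curve of class $d_0$ has actual dimension governed by $c_1\cdot d_0-1$, not by $c_1\cdot(\delta d_0)-1$, and your count does not see this. Deferring simplicity to Lemma \ref{lem:simap} is circular: that lemma applies to elements of $\cl^{\alpha,\beta}$, i.e.\ after you already know the component is irreducible of the stated class with the stated contact data -- which is exactly what the dimension count was supposed to establish. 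The paper closes this loop by running the point-count inequality directly on a decomposition of $C_-$ into $k$ multiply covered and $m-k$ simple components, obtaining $\sum_{i=1}^k(1-\delta_i)\,c_1(\bar X^-)\cdot d_i\glt m+b-1$ and using $\delta_i\glt 2$ to force $k=0$ simultaneously with $m=1$ and the value of $b$. You need this (or an equivalent a priori exclusion of multiple covers) before your expected-dimension argument is legitimate; once it is inserted, the rest of your plan goes through and matches the paper's proof.
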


\begin{proof}

From Example 11.4 and Lemma 14.6 of \cite{ip2004}, we know that no component of $\bar C$ is entirely mapped into $V$, also see  \cite{bp2014}.

Note that $[E]^2=-1$ in the real blow-up $\bar{X}^- = X_{1,0}$, and  $c_1(\bar X^-)\cdot[E]=1$.
Suppose $\bar f_*[C_+]=a[H]$, $a\geq 0$, $\bar f_*[C_-]=p^!d-b[E]$, $b\geq 0$.
Then we have
$$
a=\bar f_*[C_+]\cdot[H]=(p^!d-b[E])\cdot[E]=b.
$$

Since  $\undl x(0)\cap \bar X^+$ contains at most one point,
we will consider the two cases separately.

{\bf Case I:}  $\undl x(0)\cap \bar X^+=\emptyset$.

In this case, we know that $\bar f(C_{-})$ passes through all the $c_1(X)\cdot d-1$ points in
$\undl x(0)\cap \bar{X}^-$ and realizes the class $p^!d-b[E]$ in $H_2(\bar{X}^-;\zb)$.
Suppose $C_{-}$ consists of irreducible components $\{C_{-i}\}_{i=1}^m$,
and there are $0 \llt k\llt m$ irreducible components $\{C_{-i}\}_{i=1}^k$ such that
the restriction $\bar f|_{C_{-i}}$, $i=1,...,k$, is non-simple
which factors through a non-trivial ramified covering of degree $\delta_i\glt2$
of a simple map $f_i:\pb^1\to X^-$.
Assume $(f_i)_*[\pb^1]=d_i$, $i=1,...,k$, and $\bar f_*[C_{-j}]=d_j$, $j=k+1,...,m$.
then $\sum_{i=1}^k\delta_id_i+\sum_{j=k+1}^{m} d_j=p^!d-b[E]$.
\begin{eqnarray*}
& & c_1(\bar{X}^-)\cdot(\sum_{i=1}^kd_i)-k+c_1(\bar{X}^-)\cdot(\sum_{j=k+1}^md_j)-(m-k)\\
& & \\
&  & \geq    c_1(X)\cdot d-1 \\
&  & =  c_1(\bar{X}^-)\cdot(\sum_{i=1}^k\delta_id_i+\sum_{j=k+1}^md_j)+b-1.
\end{eqnarray*}
Therefore,
\begin{equation}\label{3-6}
\sum_{i=1}^k(1-\delta_i)c_1(\bar{X}^-)\cdot d_i
\glt m+b-1.
\end{equation}
Since $c_1(\bar X^-)\cdot d_i \glt 0$, $b\glt0$, $\delta_i\glt2$,
so (\ref{3-6}) holds only when $m=1, k=0 $ and $b=0$.
This implies that $C_{-}$ is irreducible and $\bar f|_{C_{-}}$ is simple.
$ b=0$  also implies  $\bar f_*[C_+]=0$.
  Therefore, $C_+=\emptyset$.

The previous argument implies that  $\bar f|_{C_-}$
is an element of $\cl^{0,0}(p^!d,\undl x(0)\cap \bar X^-,J_0)$. Moreover,
the finiteness of $\cl^{0,0}(p^!d,\undl x(0)\cap \bar X^-,J_0)$
implies that $\cl(d,\undl x(0), J_0)$ is finite.

{\bf Case II :} $\undl x(0)\cap\bar X^+=\{p\}$.

   In this case, the fact that the image of $\bar f(C_+)$
  has to passes $\{p\}$  implies $a=b\glt1$.
  $\bar f(C_{-})$ passes through all the $c_1(X)\cdot d-2$ points in $\undl x(0)\cap X^-$
  and realizes the class $p^!d-b[E]$ in $H_2(\bar X^-;\zb)$.
  Similar to case I, we know that $C_{-}$ is irreducible. Next, we prove that $\bar f|_{C_{-}}$ is simple.
  For this, we assume that  $\bar f|_{C_{-}}$ is non-simple. Then $\bar f|_{C_{-}}$
  factors through a non-trivial ramified covering of degree $\delta\glt2$
  of a simple map $f_0:\pb^1\to\bar X^-$, and  $(f_0)_*[\pb^1]=\frac{1}{\delta}(p^!d-b[E])$. Therefore
  \begin{equation*}
  \frac{1}{\delta}c_1(\bar X^-)\cdot(p^!d-b[E])-1 \glt c_1(X)\cdot d-2.
\end{equation*}
So we have
\begin{equation}\label{3-2}
  c_1(X)\cdot d+\delta-\delta c_1(X)\cdot d \glt b.
\end{equation}
Since $\delta\glt2$, $c_1(X)\cdot d\glt2$, (\ref{3-2}) implies $b\leq 0$.
  This is in contradiction with $b\geq 1$. Therefore, $\bar f|_{C_{-}}$ is simple.

From
\begin{eqnarray*}
  c_1(\bar X^-)\cdot (p^!d-b[E])-1 & = & c_1(X)\cdot d-1-b\\
  & \glt & c_1(X)\cdot d-2.
\end{eqnarray*}
we obtain $b\leq 1$. So we have $b=1$, and
  $\bar f_*[C_+]=[H]$.

The previous argument implies that $\bar f|_{C_-}$
  is an element of $\cl^{0,\delta_1}(p^!d-[E],\undl x(0)\cap\bar X^-,J_0)$.
  The finiteness of $\cl^{0,\delta_1}(p^!d-[E],\undl x(0)\cap\bar X^-,J_0)$
  implies that $\cl(d,\undl x(0), J_0)$ is finite.

The number of elements of $\cl(d,\undl x(\lambda),J_\lambda)$ converging to $\bar f$ as $\lambda$
goes to $0$ follows from \cite{ip2004}.
Let's review the behavior of an elements
$f_\lambda:C_\lambda\to\zl_\lambda$ of $\cl(d,\undl x(\lambda),J_\lambda)$
converging to $\bar f$ close to the smoothing of the intersection point $p$ of $C_-$ and
$C_+$. In local coordinates $(\lambda,x,y)$ at $\bar f(p)$, the manifold $\zl$ is given
by the equation $xy=\lambda$. Locally,
$$
\bar{X}^+=\{\lambda=0\text{ and } y=0\}, \,\,\,\, \bar{X}^-=\{\lambda=0\text{ and }x=0\}.
$$
Since the order of intersection of $\bar f_{C_+}$ and $V$ at $\bar f(p)$ is $1$,
the maps $\bar f_{C_+}$ and $\bar f_{C_-}$ have expansions
$$
x(z)=mz+o(z)\,\,\, \mbox{  and  }\,\,\, y(w)=nw+o(w),
$$
where $z$ and $w$ are local coordinates at $p$ of $C_+$ and $C_-$ respectively.

For $0<|\lambda|\ll1$, there exists a solution $\mu(\lambda)\in\cb^*$ of
$$
\mu(\lambda)=\frac{\lambda}{mn}
$$
such that the smoothing of $\bar C$ at $p$ is locally given by $zw=\mu(\lambda)$, and the
map $f_\lambda$ is approximated by the map
$$
\{zw=\mu(\lambda)\}\subset\cb^2\mapsto(\lambda,mz,nw)
$$
close to the smoothing of $p$. Furthermore, such maps $f_\lambda\in\cl(d,\undl x(\lambda),J_\lambda)$
converging to $\bar f$ are in one to one correspondence with the choice of such $\mu(\lambda)$ for each
point of $C_+\cap C_-$.

\end{proof}

Applying Proposition $\ref{prop:sum11}$, we can get a comparison theorem of the Welschinger invariants.
Let $\rb\cl^{\alpha^r+\alpha^c,\beta^r+\beta^c}(d,\undl x,J)$ be the set of real rational
curves in $\cl^{\alpha,\beta}(d,\undl x,J)$, $\alpha=\alpha^r+\alpha^c$
and $\beta=\beta^r+\beta^c$, such that the $\alpha$ (or $\beta$) "points"  consists of
$\alpha^r$ (or $\beta^r$) real "points" and $\frac{1}{2}\alpha^c$ (or $\frac{1}{2}\beta^c$)
pairs of $\tau$-conjugated "points".

\begin{prop}\label{prop:welrep11}
Let $X_\rb$ be a compact real symplectic $4$-manifold,
$d\in H_2(X;\zb)$ such that $c_1(X)\cdot d>0$ and $\tau_*d=-d$.
Denote by $p: X_{1,0}\to X$ the projection of the real symplectic blow-up of $X$ at $x\in\rb X$.
Let $\undl x(\lambda)$, $\bar{X}^+$, $\bar{X}^-$, $J_0$ be as before.
Then
\begin{itemize}
  \item if $\undl x(0)\cap \bar{X}^+=\{p\}$, $\undl x(0)\cap \bar{X}^-\neq\emptyset$,
  \begin{equation}\label{eq:rcomp1}
  W_{X_\rb}(d,s)=\sum_{C_-\in\rb\cl^{0,\delta_1^r}(p^!d-[E],\undl x(0)\cap \bar{X}^-,J_0)}(-1)^{m_{X_{1,0}}(C_-)},
\end{equation}
\item if $\undl x(0)\cap \bar{X}^+=\emptyset$,
\begin{equation}\label{3-4}
  W_{X_\rb}(d,s)=\sum_{C_-\in\rb\cl^{0,0}(p^!d,\undl x(0)\cap \bar{X}^-,J_0)}(-1)^{m_{X_{1,0}}(C_-)}.
\end{equation}
\end{itemize}
where $E$ is the exceptional divisor.
\end{prop}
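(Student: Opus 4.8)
The plan is to compute $W_{X_\rb}(d,s)$ on a generic nearby fiber $\zl_\lambda$ and then degenerate $\lambda\to 0$. Since the real symplectic sum of $\bar X^+$ and $\bar X^-$ along $V$ is the inverse of the real symplectic cut that produced these two pieces, the fiber $(\zl_\lambda,\omega_\lambda,\tau_{\zl_\lambda})$ is deformation equivalent to $X_\rb$ for every $\lambda\neq0$. By the deformation invariance of Welschinger invariants and their independence of the chosen $J$ and real configuration, for $0<|\lambda|\ll1$ and generic $J_\lambda$ we may write
\begin{equation*}
W_{X_\rb}(d,s)=\sum_{C_\lambda\in\rb\cl(d,\undl x(\lambda),J_\lambda)}(-1)^{m_X(C_\lambda)},
\end{equation*}
where $\undl x(\lambda)$ is arranged to contain exactly $s$ conjugate pairs; in Case II the real section meeting $\bar X^+$ uses up one real point of the configuration, leaving $c_1(X)\cdot d-2$ points on $\bar X^-\setminus V$, consistent with the set $\cl^{0,\delta_1}(p^!d-[E],\,\cdot\,,J_0)$.

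Next I would transport this count to the central fiber. By Gromov compactness in the degenerating family \cite{ip2004}, each $C_\lambda$ converges as a stable map to a limit $\bar f\colon\bar C\to X_\sharp$, and Proposition \ref{prop:sum11} describes these limits completely: in Case I the component $C_+$ is empty and $\bar f|_{C_-}\in\cl^{0,0}(p^!d,\undl x(0)\cap\bar X^-,J_0)$, while in Case II the curve $C_+$ is an irreducible line in class $[H]$ and $\bar f|_{C_-}\in\cl^{0,\delta_1}(p^!d-[E],\undl x(0)\cap\bar X^-,J_0)$. Moreover Proposition \ref{prop:sum11} guarantees that $\bar f$ is the limit of a \emph{unique} element of $\cl(d,\undl x(\lambda),J_\lambda)$, since there is at most one node on $V$ and its contact order is $1$, so the smoothing parameter $\mu(\lambda)$ is uniquely determined. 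Thus $C_\lambda\mapsto\bar f\mapsto\bar f|_{C_-}=C_-$ is a bijection on complex curves, and I would check it respects the real structures: a $\tau$-invariant $C_\lambda$ has a $\tau_{\zl_0}$-invariant limit, and since $C_-$ carries the non-torsion part of the class it must be real; conversely a real $C_-$ glued to the unique real line $C_+$ through the real point $p$ along the real node on $V$ has a real limit whose unique smoothing is real, because the node is real of contact order one and $\mu(\lambda)$ may be taken real. This produces a bijection between $\rb\cl(d,\undl x(\lambda),J_\lambda)$ and $\rb\cl^{0,0}(p^!d,\,\cdot\,,J_0)$ (resp. $\rb\cl^{0,\delta_1^r}(p^!d-[E],\,\cdot\,,J_0)$).

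It then remains to match signs, i.e. to show $(-1)^{m_X(C_\lambda)}=(-1)^{m_{X_{1,0}}(C_-)}$, the invariance of the mass under the degeneration. Here I would argue locally. The real nodes of $C_-$ all lie in $\bar X^-\setminus V=X_{1,0}\setminus E$, on which the blow-down $p$ is a real biholomorphism, so they remain nodes of the same type (isolated versus non-isolated) when viewed in $X$. For $0<|\lambda|\ll1$ the curve $C_\lambda$ agrees with $C_-$ away from a neighbourhood of $V$, so these nodes persist, with unchanged character, as real nodes of $C_\lambda$. In Case I this accounts for all nodes since $C_+$ is empty; in Case II the extra component $C_+$ is a smooth embedded line in $\bar X^+\cong\pb^2$ and contributes no node, while the gluing node on $V$ has contact order one and is smoothed into a single smooth branch, creating none. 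Hence $m_X(C_\lambda)=m_{X_{1,0}}(C_-)$ in both cases, and substituting into the sum yields the two asserted identities.

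The step I expect to be the main obstacle is exactly this mass comparison: verifying that no isolated real node is created or destroyed, and that none switches between the solitary and non-solitary types, as the two branches are glued across $V$ and the family is smoothed. One must control the conjugate pair of branches near $V$ through the explicit local model $zw=\mu(\lambda)$ of Proposition \ref{prop:sum11} to confirm that the count of isolated real nodes is stable under the smoothing.
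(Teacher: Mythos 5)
Your proposal is correct and follows essentially the same route as the paper: identify $W_{X_\rb}(d,s)$ with the count on a nearby fiber $\zl_\lambda$, use Proposition \ref{prop:sum11} to classify the limits and the uniqueness of their smoothings to set up a bijection with $\rb\cl^{0,0}$ (resp. $\rb\cl^{0,\delta_1^r}$), and observe that $C_+$ contributes no nodes and that no node is created near $V$ under the smoothing $zw=\mu(\lambda)$, so the masses agree. The only stylistic difference is that you derive reality of the smoothed curve from the realness of $\mu(\lambda)$, whereas the paper deduces it directly from the uniqueness of the deforming element; both are valid.
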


\begin{proof}
Equip the small disc $\Delta$ with the standard complex conjugation.
From subsection $\ref{subsec:realsympcut}$,
we know one can equip the symplectic sum $\pi:\zl\to\Delta$ with a real structure $\tau_{\zl}$
which is induced by the real structures $\tau_-$, $\tau_+$
on the real symplectic cuts $\bar{X}^-$ and $\bar{X}^+$
such that the map $\pi:\zl\to\Delta$ is real.
Choose a set of real sections $\undl x:\Delta\to\zl$ .
Let $\bar f:\bar C\to X_\sharp$ be a real element in $\rb\cl(d,\undl x(0),J_0)$.

For the case $\undl x(0)\cap \bar{X}^+=\{p\}$ and $\undl x(0)\cap \bar{X}^-\neq\emptyset$,
from Proposition $\ref{prop:sum11}$ and Lemma $\ref{lem:p2set}$
we know $\bar f_*[C_+]=[H]$ and $\bar f|_{C_+}$ is an embedded simple curve.
$\bar f(C_+)$ has no self-intersection point, so $\bar f(C_+)$ has no node.
And there is only one possibility for $\bar f|_{C_+}$ to recover a real curve $\bar f(\bar C)$
when $\bar f|_{C_-}$ is fixed. In other words,
the number of real curves $\bar f\in\rb\cl(d,\undl x(0),J_0)$ is equal to
the number of the real curves $\bar f|_{C_-}\in\rb\cl^{0,\delta_1}(p^!d-[E],\undl x(0)\cap \bar{X}^-,J_0)$.
Therefore we have
\begin{eqnarray}\label{3-3}
m_{X_\sharp}(\bar f(\bar C)) & = & m_{\bar{X}^-}(\bar f|_{\bar C_-})+m_{\bar{X}^+}(\bar f|_{C_+})\nonumber\\
  & = & m_{\bar{X}^-}(\bar f|_{\bar C_-}).
\end{eqnarray}
By Proposition $\ref{prop:sum11}$, an element $\bar f$ of $\cl(d,\undl x(0),J_0)$ is the limit
of a unique element of $\cl(d,\undl x(\lambda),J_\lambda)$, so the latter has to be real when
$\bar f$ is real and $\lambda\in\cb^*$ is small. The description at the end of the proof of
Proposition $\ref{prop:sum11}$ of the local deformation of $\bar f$ shows that no node appears
in a neighborhood of $V\cap\bar f(\bar C)$ when deforming $\bar f$. Combining with (\ref{3-3}), this implies (\ref{eq:rcomp1}).

For the case $\undl x(0)\cap \bar{X}^+=\emptyset$,
we know $\bar f_*[C_+]=0$ from Proposition $\ref{prop:sum11}$.
The real curve $\bar f(\bar C)$ is determined by the part $\bar f|_{C_-}$.
In other words,
the number of real curves $\bar f\in\rb\cl(d,\undl x(0),J_0)$ is equal to
the number of the real curves $\bar f|_{C_-}\in\rb\cl^{0,0}(p^!d,\undl x(0)\cap \bar{X}^-,J_0)$.
Then we have
$$
m_{X_\sharp}(\bar f(\bar C))=m_{X^-}(\bar f|_{\bar C_-})
$$

The rest of the Proposition can be proved similar to the previous case.
\end{proof}

\begin{rem}
Proposition $\ref{prop:welrep11}$ tells us that the sum
$$
\sum_{C_-\in\rb\cl^{0,\delta_1^r}(p^!d-[E],\undl x(0)\cap\bar X^-,J_0)}(-1)^{m_{X_{1,0}}(C_-)}
$$
on the right side of formula $(\ref{eq:rcomp1})$
does not dependent on $\undl x(0)\cap\bar X^-$ and $J_0$.
It can be seen as a particular case of relative Welschinger invariants.
See \cite{brugalle2016} and \cite{iks2014rw} for more about relative Welschinger invariants.
\end{rem}

Now we take real symplectic cut along the exceptional divisor $E$
of the real symplectic blow-up manifold $\tilde X=X_{1,0}$.
We can get two real symplectic cuts:

$$
\overline{X}_{1,0}^+\cong\pb( N_{E|X_{1,0}}\oplus\ol_E)\cong\pb_E(\ol(-1)\oplus\ol)
\,\,\,\mbox{ and } \,\,\, \overline{ X}_{1,0}^-\cong X_{1,0},
$$
which contain a common real symplectic submanifold $V$.
In $\overline{ X}_{1,0}^+ $, $V\cong E_\infty$  is the infinity section of $\pb_E(\ol(-1)\oplus\ol)\to E$.
In $\overline{ X}_{1,0}^-\cong X_{1,0}$, $V\cong E$ is the exceptional divisor.

Let $\tilde\zl$ be the real symplectic sum of $\overline{ X}_{1,0}^+$ and $\overline{X}_{1,0}^-$ along $V$.
Let $p^!d-[E]\in H_2(\tilde\zl_\lambda;\zb)$, where $d\in H_2(X;\zb)$.
Choose $\tilde{\undl x}_1(\lambda)$
a set of $c_1(X)\cdot d-1$ real symplectic sections $\Delta\to\tilde\zl$ such that
$\tilde{\undl x}_1(0)\cap V=\emptyset$, $\tilde{\undl x}_1(0)\cap \overline{X}_{1,0}^+=\emptyset$.
Choose $\tilde{\undl x}_2(\lambda)$
a set of $c_1(X)\cdot d-2$ real symplectic sections $\Delta\to\tilde\zl$ such that
$\tilde{\undl x}_2(0)\cap V=\emptyset$, $\tilde{\undl x}_2(0)\cap\overline{X}^+_{1,0}=\emptyset$.
Choose a generic almost complex structure $\tilde J$ on $\tilde\zl$ as above.

Let $\tilde X_\sharp=\overline{X}_{1,0}^+\cup_V\overline{X}_{1,0}^-$.
Define $\cl(p^!d,\tilde{\undl x}_1(0),\tilde J_0)$,
$\cl(p^!d-[E],\tilde{\undl x}_2(0),\tilde J_0)$ to be the set
$\{\bar f:\bar C\to\tilde X_\sharp\}$ of limits, as stable maps,
of maps in $\cl(p^!d,\tilde{\undl x}_1(\lambda),\tilde J_\lambda)$,
$\cl(p^!d-[E],\tilde{\undl x}_2(\lambda),\tilde J_\lambda)$ as $\lambda$ goes to $0$, respectively.
Given an element
$\bar f:\bar C\to\tilde X_\sharp$ of $\cl(p^!d,\tilde{\undl x}_2(0),\tilde J_0)$ or
$\cl(p^!d-[E],\tilde{\undl x}_2(0),\tilde J_0)$, Denote by $C_*$, $*=+,-$, the union of the irreducible components
of $\bar C$ mapped to $\overline{X}_{1,0}^*$.

\begin{prop}\label{prop:sum12}
Under the assumption above, we have the following:

  (1) For a generic $\tilde J_0$, the set $\cl(p^!d,\tilde{\undl x}_1(0),\tilde J_0)$ is finite,
  and only depends on $\tilde{\undl x}_1(0)$ and $\tilde J_0$.
  Given an element $\bar f:\bar C\to\tilde X_\sharp$  of
  $\cl(p^!d,\tilde{\undl x}_1(0),\tilde J_0)$,
  the restriction of $\bar f$ to any component of $\bar C$ is a simple map,
  and no irreducible component of $\bar C$ is entirely mapped into $V$.
  Moreover, the curve $C_-$ is irreducible and $\bar f|_{C_-}$ is an element of
  $\cl^{0,0}(p^!d,\tilde{\undl x}_1(0)\cap\overline{X}_{1,0}^-,\tilde J_0)$.
  The map $\bar f$ is the limit of a unique element of
  $\cl(p^!d,\tilde{\undl x}_1(\lambda),\tilde J_\lambda)$ as $\lambda$ goes to $0$.

 (2) For a generic $\tilde J_0$, the set $\cl(p^!d-[E],\tilde{\undl x}_2(0),\tilde J_0)$ is finite,
  and only depends on $\tilde{\undl x}_2(0)$ and $\tilde J_0$.
  Given an element $\bar f:\bar C\to\tilde X_\sharp$  of
  $\cl(p^!d-[E],\tilde{\undl x}_2(0),\tilde J_0)$,
  the restriction of $\bar f$ to any component of $\bar C$ is a simple map,
  and no irreducible component of $\bar C$ is entirely mapped into $V$.
  Moreover, the curve $C_-$ is irreducible and $\bar f|_{C_-}$ is an element of
  $\cl^{0,\delta_1}(p^!d-[E],\tilde{\undl x}_2(0)\cap\overline{X}_{1,0}^-,\tilde J_0)$.
  The map $\bar f$ is the limit of a unique element of
  $\cl(p^!d-[E],\tilde{\undl x}_2(\lambda),\tilde J_\lambda)$ as $\lambda$ goes to $0$.

\end{prop}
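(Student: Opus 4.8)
The plan is to repeat the degeneration analysis of Proposition~\ref{prop:sum11} for the symplectic sum $\tilde\zl\to\Delta$ with central fibre $\tilde X_\sharp=\overline{X}_{1,0}^+\cup_V\overline{X}_{1,0}^-$, handling parts (1) and (2) in parallel. The two parts differ only through the total class ($p^!d$ versus $p^!d-[E]$) and the number of constraint points carried by the fibres ($c_1(X)\cdot d-1$ versus $c_1(X)\cdot d-2$); in both parts no constraint point lies on $\overline{X}_{1,0}^+$, so the role played by $\pb^2$ and Lemma~\ref{lem:p2set} in Proposition~\ref{prop:sum11} is now played by $\overline{X}_{1,0}^+\cong\pb_E(\ol(-1)\oplus\ol)$ and Lemma~\ref{lem:tp2set}. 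Throughout I would use the intersection data recorded before Lemma~\ref{lem:tp2set}, namely $c_1(\tilde{\mathbb{C}P}^2)=2[E_\infty]+[F]$, $[E_\infty]^2=1$ and $[F]\cdot[E_\infty]=1$, together with $[E]^2=-1$, $c_1(X_{1,0})\cdot[E]=1$, $c_1(X_{1,0})\cdot p^!d=c_1(X)\cdot d$ and $p^!d\cdot[E]=0$.

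First I would invoke Example~11.4 and Lemma~14.6 of \cite{ip2004}, exactly as in Proposition~\ref{prop:sum11}, so that no irreducible component of $\bar C$ is entirely mapped into $V$; then every component meets $V$ in isolated points with well-defined contact orders, and the node-by-node matching of Subsection~\ref{subsec:bpform} applies. Writing $\bar f_*[C_+]=a[E_\infty]+b'[F]$, the hypothesis that $\overline{X}_{1,0}^+$ carries no bulk constraint ($|\undl x^0|=0$) lets me feed each non-contracted component of $C_+$ into Lemma~\ref{lem:tp2set}: any such component lies in a relative moduli set on $\overline{X}_{1,0}^+$ with $|\undl x^0|=0$, and Lemma~\ref{lem:tp2set} forces it to be the unique embedded fibre of class $[F]$. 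Hence $a=0$ and $C_+$ is a disjoint union of $b'$ fibres, each meeting $V=E_\infty$ once; summing the equal-multiplicity matching over the nodes gives $b'=\bar f_*[C_-]\cdot[E]$.

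The heart of the argument is an index count on the $\overline{X}_{1,0}^-$ side, modelled on inequalities (\ref{3-6}) and (\ref{3-2}). Decomposing $C_-$ into irreducible components, isolating those factoring through a degree $\delta_i\glt2$ cover of a simple map $f_i$ of class $d_i$ from the simple ones, and comparing the number of constraint points that the simple models can absorb for generic $\tilde J_0$ with the $c_1(X)\cdot d-1$ (resp.\ $c_1(X)\cdot d-2$) points actually imposed, one is led to an inequality of the shape $\sum_i(1-\delta_i)\,c_1(X_{1,0})\cdot d_i\glt(\#\,\text{components})+b'-1$, resp.\ its analogue for part (2). Because $c_1(X_{1,0})\cdot d_i\glt0$ and $\delta_i\glt2$, this can close up only when $C_-$ is irreducible and $\bar f|_{C_-}$ is simple. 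Since $a=0$ and fibres contract to points of $V$, the image class $\bar f_*[C_-]$ must then equal the prescribed total class, giving $b'=0$ in part (1) and $b'=1$ in part (2); equivalently $\bar f|_{C_-}$ lies in $\cl^{0,0}(p^!d,\tilde{\undl x}_1(0)\cap\overline{X}_{1,0}^-,\tilde J_0)$ in part (1) and in $\cl^{0,\delta_1}(p^!d-[E],\tilde{\undl x}_2(0)\cap\overline{X}_{1,0}^-,\tilde J_0)$ in part (2), and in the latter $C_+$ is the single fibre through the unique node.

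Finiteness of each limit set then follows from the finiteness of these relative moduli sets, which is Proposition~\ref{prop:finite} together with Lemma~\ref{lem:simap}; and the bijection between elements of the smooth fibres converging to $\bar f$ and the choices of smoothing parameter $\mu(\lambda)$ at each node is inherited verbatim from the local model $xy=\lambda$, $zw=\mu(\lambda)$ at the end of the proof of Proposition~\ref{prop:sum11}. As there are no nodes in part (1) and exactly one in part (2), in each case $\bar f$ is the limit of a unique element. The step I expect to be the main obstacle is the index count: one must verify that the combinatorial inequality genuinely cannot be saturated by a nontrivial multiple cover or by an extra rational component, which requires care with the tangency bookkeeping (keeping $\alpha=0$ and tracking $|\beta|$) and with the positivity $c_1(X_{1,0})\cdot d_i\glt0$ on components carrying a negative multiple of the exceptional class $[E]$.
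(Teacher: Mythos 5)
Your overall architecture matches the paper's proof: exclude components mapped into $V$ via \cite[Example 11.4, Lemma 14.6]{ip2004}, run the multiple-cover/reducibility index count on the $\overline{X}_{1,0}^-$ side exactly as in inequalities (\ref{3-5}) and (\ref{3-6}), deduce finiteness from the finiteness of the relative sets, and get uniqueness of the deformation from the local model $xy=\lambda$, $zw=\mu(\lambda)$. However, there is a genuine gap in how you determine $\bar f_*[C_+]$. You claim that, since $|\undl x^0|=0$ on $\overline{X}_{1,0}^+$, Lemma \ref{lem:tp2set} forces every non-contracted component of $C_+$ to be a fibre, hence kills the $[E_\infty]$-coefficient. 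But Lemma \ref{lem:tp2set} only addresses the relative sets whose expected dimension is zero (its proof begins by imposing $2a+b-1+|\beta|=0$); a component of $C_+$ in class $[E_\infty]$, say, meeting $V$ at the prescribed matching point, sits in a set of expected dimension $c_1(\tilde{\mathbb{C}P}^2)\cdot[E_\infty]-1-[E_\infty]^2=1>0$, which the lemma is silent about. Nothing in your argument up to that point establishes that the $C_+$ components are rigidly constrained, so the component-wise appeal to Lemma \ref{lem:tp2set} is not justified; and your subsequent index count, being confined to the $C_-$ side, only bounds $k=\bar f_*[C_-]\cdot[E]$ and in part (2) cannot distinguish $C_+$ of class $[F]$ from $C_+$ of class $[E_\infty]$ (both give $k=1$).

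The paper closes this exact point by a purely homological observation that your write-up omits: the section $E_0\subset\overline{X}_{1,0}^+$ lies away from $V$ and corresponds to the exceptional divisor of the glued manifold $\tilde\zl_\lambda\cong X_{1,0}$, so the $[F]$-coefficient of $\bar f_*[C_+]$ is computed directly as
$$
\bar f_*[C_+]\cdot[E_0]=\bar f_*[\bar C]\cdot[E_0]=
\begin{cases}
p^!d\cdot[E]=0 & \text{in part (1)},\\
(p^!d-[E])\cdot[E]=1 & \text{in part (2)},
\end{cases}
$$
since $C_-$ cannot meet $E_0$. Combined with the matching relation $\bar f_*[C_+]\cdot[E_\infty]=k$ and the $C_-$-side dimension count forcing $k=0$ (resp.\ $k=1$), this pins down $\bar f_*[C_+]=0$ (resp.\ $[F]$) and in particular rules out any $[E_\infty]$-component. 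Only after that is Lemma \ref{lem:tp2set} invoked, legitimately, to say the single $C_+$ component in part (2) is the unique embedded fibre. You should insert this pairing with $[E_0]$ before (or in place of) your appeal to Lemma \ref{lem:tp2set}; with that correction the rest of your outline goes through as in the paper.
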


\begin{proof}
The fact that no component of $\bar C$ is entirely mapped into $V$ follows from
\cite[Example 11.4 and Lemma 14.6]{ip2004}, also see \cite{bp2014}.

(1)  Suppose $\bar f_*[C_+]=a[F]+b[E_\infty]$, $\bar f_*[C_-]=p^!d-k[E]$, $k\geq 0$
  where $F$ is a fiber of $\pb_E(\ol(-1)\oplus\ol)\to E$ with $F\cdot[E_0]=1$ and $F\cdot[E_\infty]=1$. Then
  \begin{align*}
  a+b=(a[F]+b[E_\infty])\cdot[E_\infty]&=(p^!d-k[E])\cdot[E]=k,\\
  a=(a[F]+b[E_\infty])\cdot[E_0]&=(p^!d)\cdot[E]=0.
  \end{align*}
  In $\overline{X}_{1,0}^-$, we know $\bar f|_{C_-}$ passes through
  $$
  |\tilde{\undl x}_1(0)|=c_1(X)\cdot d-1=c_1(\overline{X}_{1,0}^-)\cdot (p^!d)-1
  $$
  distinct points in $\overline{X}_{1,0}^-$.
 The same argument as in the proof of Proposition $\ref{prop:sum11}$ shows that $C_{-}$ is irreducible.

  Next, we prove that $\bar f|_{C_{-}}$ is simple. For this, we assume that $\bar f|_{C_{-}}$ is non-simple. Then $\bar f|_{C_{-}}$
  factors through a non-trivial ramified covering of degree $\delta\glt2$
  of a simple map $f_0:\pb^1\to \overline{X}_{1,0}^-$, and $(f_0)_*[\pb^1]=\frac{1}{\delta}(p^!d-k[E])$. Therefore,
  \begin{eqnarray}\label{3-5}
  \frac{1}{\delta}c_1(\overline{X}_{1,0}^-)\cdot(p^!d-k[E])-1 & \glt & c_1(\overline{X}_{1,0}^-)\cdot (p^!d)-1\nonumber\\
  (1-\delta) c_1(\overline{X}_{1,0}^-)\cdot (p^!d) & \glt & k.
  \end{eqnarray}
Since $c_1(X)\cdot d\glt1$, $\delta\glt2$, $k\glt0$, so (\ref{3-5}) is impossible. Therefore,
 $\bar f|_{C_{-}}$ can only be simple.

  On the other hand, we have
  \begin{eqnarray*}
  c_1(\overline{X}_{1,0}^-)\cdot(p^!d-k[E])-1 & = & c_1(\overline{X}_{1,0}^-)\cdot p^!d-k-1\\
  & \glt &  c_1(\overline{X}_{1,0}^-)\cdot (p^!d)-1.
  \end{eqnarray*}
This implies $k=0$ and $b=0$. Therefore, $C_{+}=\emptyset$ and $\bar f|_{C_-}$ is an element of
  $\cl^{0,0}(p^!d,\tilde{\undl x}_1(0)\cap\overline{X}_{1,0}^-,\tilde J_0)$
  which is also a simple map.

(2) Suppose $\bar f_*[C_+]=a[F]+b[E_\infty]$,  $\bar f_*[C_-]=p^!d-k[E]$, $k\glt1$,
  where $F$ is a fiber of $\pb_E(\ol(-1)\oplus\ol)\to E$ with $F\cdot[E_0]=1$ and $F\cdot[E_\infty]=1$. Then
\begin{eqnarray*}
  a+b & = & (a[F]+b[E_\infty])\cdot[E_\infty]=(p^!d-k[E])\cdot[E]=k,\\
   a & = & (a[F]+b[E_\infty])\cdot[E_0]=(p^!d-[E])\cdot[E]=1.
\end{eqnarray*}

  In $\overline{X}_{1,0}^-$, we know that $\bar f|_{C_-}$ passes through
  $$
  c_1(X)\cdot d-2=c_1(\overline{X}_{1,0}^-)\cdot (p^!d-[E])-1
  $$
  distinct points.
  The same argument as in the proof of Proposition $\ref{prop:sum11}$ shows that $C_{-}$ is irreducible.

  By a similar analysis of the dimension condition as in the proof of case (1), we obtain
  $\bar f_*[C_+]=[F]$,
  $C_+$ must have exactly $1$ component and the image of it realize the class $[F]$.
  $\bar f|_{C_-}$ is an element of
  $\cl^{0,\delta_1}(p^!d-[E],\tilde{\undl x}_2(0)\cap\overline{X}_{1,0}^-,\tilde J_0)$
  which is a simple map.

  The proof of other parts is the same as that in the proof of Proposition $\ref{prop:sum11}$.
We omit it here.
\end{proof}

\begin{prop}\label{prop:welrep12}
Let $X_\rb$ be a compact real symplectic $4$-manifold,
$d\in H_2(X;\zb)$ such that $c_1(X)\cdot d-1=r+2s>0$ and $\tau_* d =-d$.
Denote by $p: X_{1,0}\to X$ the projection of the real symplectic blow-up of $X$ at $x\in\rb X$.
Let $\tilde{\undl x}_1$, $\tilde{\undl x}_2$, $\tilde J_0$, $\overline{X}_{1,0}^+$, $\overline{X}_{1,0}^-$
be as before.
Then
\begin{eqnarray*}
W_{X_{1,0}}(p^!(d),s)
& = & \sum_{C_-\in\rb\cl^{0,0}(p^!d,\tilde{\undl x}_1(0)\cap\overline{X}_{1,0}^-,\tilde J_0)}(-1)^{m_{X_{1,0}}(C_-)},\\
& & \\
W_{X_{1,0}}(p^!(d)-[E],s)
& = & \sum_{C_-\in\rb\cl^{0,\delta_1^r}(p^!d-[E],\tilde{\undl x}_2(0)\cap\overline{X}_{1,0}^-,\tilde J_0)}(-1)^{m_{X_{1,0}}(C_-)},
\end{eqnarray*}
where $E$ is the exceptional divisor.
\end{prop}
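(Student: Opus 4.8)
The plan is to recognize Proposition~\ref{prop:welrep12} as the exact analogue of Proposition~\ref{prop:welrep11}, but applied to the second degeneration, namely the real symplectic sum $\tilde\zl$ obtained by cutting $X_{1,0}$ along its exceptional divisor $E$. The left-hand sides $W_{X_{1,0}}(p^!d,s)$ and $W_{X_{1,0}}(p^!d-[E],s)$ are honest Welschinger invariants of the blow-up $X_{1,0}$, and the strategy is to compute them by degenerating $X_{1,0}$ into the normal-crossing configuration $\tilde X_\sharp=\overline{X}_{1,0}^+\cup_V\overline{X}_{1,0}^-$ and then identifying the limiting real curves. Since the Welschinger invariant is deformation-invariant and independent of the choice of $\tilde J$ and of the real configuration (as recalled in Subsection~\ref{subsec:weldef}), one may compute it using the sections $\tilde{\undl x}_i(\lambda)$ and the generic almost complex structure $\tilde J$ chosen so that both degenerate as $\lambda\to0$.

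First I would invoke Proposition~\ref{prop:sum12} to control the limit curves. In case (1), with configuration $\tilde{\undl x}_1(0)$ entirely disjoint from $\overline{X}_{1,0}^+$, Proposition~\ref{prop:sum12}(1) tells us $C_+=\emptyset$, that $C_-$ is irreducible and simple, and that $\bar f|_{C_-}$ lands in $\cl^{0,0}(p^!d,\tilde{\undl x}_1(0)\cap\overline{X}_{1,0}^-,\tilde J_0)$; moreover each such limit is the limit of a \emph{unique} smoothing. In case (2), Proposition~\ref{prop:sum12}(2) forces $\bar f_*[C_+]=[F]$ with $C_+$ a single fiber, and puts $\bar f|_{C_-}$ in $\cl^{0,\delta_1}(p^!d-[E],\tilde{\undl x}_2(0)\cap\overline{X}_{1,0}^-,\tilde J_0)$, again with unique smoothing. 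The key extra input for case (2) is Lemma~\ref{lem:tp2set}, the exact counterpart of Lemma~\ref{lem:p2set} used in Proposition~\ref{prop:welrep11}: it guarantees that the set of curves in class $[F]$ in $\overline{X}_{1,0}^+\cong\pb_E(\ol(-1)\oplus\ol)$ through the single intersection point is a unique embedded $\tilde J_0$-holomorphic curve. This embeddedness means $\bar f|_{C_+}$ contributes no node and, being real and unique, there is exactly one way to complete a fixed real $\bar f|_{C_-}$ into a real curve $\bar f(\bar C)$.

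With the geometric picture fixed, the remaining argument is the mass bookkeeping, which I would carry out verbatim as in the proof of Proposition~\ref{prop:welrep11}. The decomposition of the mass is additive over the components, $m_{\tilde X_\sharp}(\bar f(\bar C))=m_{\overline{X}_{1,0}^-}(\bar f|_{C_-})+m_{\overline{X}_{1,0}^+}(\bar f|_{C_+})$, and since $\bar f|_{C_+}$ is an embedded (hence nodeless) real curve, the second term vanishes; thus $m_{\tilde X_\sharp}(\bar f(\bar C))=m_{X_{1,0}}(\bar f|_{C_-})$ in both cases. Because each limit $\bar f$ comes from a unique smoothing $f_\lambda\in\cl(\cdot,\cdot,\tilde J_\lambda)$ for small $\lambda\in\cb^*$, a real $\bar f$ is the limit of a real $f_\lambda$, and the local deformation description at the end of Proposition~\ref{prop:sum11}'s proof shows no new node appears near $V\cap\bar f(\bar C)$ during smoothing. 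Hence the signed count defining $W_{X_{1,0}}(p^!d,s)$ (resp. $W_{X_{1,0}}(p^!d-[E],s)$) equals the signed count over $\rb\cl^{0,0}$ (resp. $\rb\cl^{0,\delta_1^r}$), which is precisely the asserted formula.

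The main obstacle I anticipate is not in the homology or dimension computations, which Proposition~\ref{prop:sum12} already settles, but in rigorously verifying that the real structure and the mass are preserved in the limit and under the smoothing. Concretely, one must confirm that the bijection between real limit curves and real curves $\bar f|_{C_-}$ genuinely identifies isolated nodes with isolated nodes and non-isolated with non-isolated, so that $(-1)^{m}$ matches term by term, and that no isolated node is created or destroyed in the neck region $zw=\mu(\lambda)$. Since the deformation near $V$ is explicitly the smoothing $\{zw=\mu(\lambda)\}\mapsto(\lambda,mz,nw)$, which introduces no self-intersection of $\bar f(\bar C)$, this is the same local analysis already used for Proposition~\ref{prop:welrep11}, so I expect it to go through without modification; the proof should therefore consist mainly of citing Proposition~\ref{prop:sum12}, Lemma~\ref{lem:tp2set}, and the argument of Proposition~\ref{prop:welrep11}.
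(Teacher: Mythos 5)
Your proposal is correct and follows essentially the same route as the paper, which itself only remarks that Proposition~\ref{prop:welrep12} follows from Proposition~\ref{prop:sum12} by the argument of Proposition~\ref{prop:welrep11}; you supply exactly that argument, including the use of Lemma~\ref{lem:tp2set} to identify the unique embedded fiber component and the mass bookkeeping plus neck-smoothing analysis carried over verbatim. No gaps.
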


\begin{rem}
Similar
to Proposition $\ref{prop:welrep11}$, by Proposition $\ref{prop:sum12}$,  one can prove Proposition $\ref{prop:welrep12}$.
Moreover, Proposition $\ref{prop:welrep11}$ and Proposition $\ref{prop:welrep12}$ imply Theorem $\ref{thm:bpr}$.
\end{rem}

\subsection{Blow-up formula at a conjugated pair}\label{subsec:bpf2}

Let $(X,\omega)$ be a compact  connected  real symplectic $4$-manifold,
and let $V_1$,$V_2\subset X$ be two disjoint embedded symplectic curves in $X$.
Let $d\in H_2(X;\zb)$ and $\alpha^1$, $\alpha^2$, $\beta^1$, $\beta^2\in\zb_{\glt0}^\infty$ such that
$$
I\alpha^1+I\beta^1=d\cdot[V_1],~I\alpha^2+I\beta^2=d\cdot[V_2].
$$
Choose a configuration $\undl x=\undl x^\circ\sqcup\undl x_{V_1}\sqcup\undl x_{V_2}$ of points in $X$,
with $\undl x^\circ$ a configuration of $c_1(X)\cdot d-1-d\cdot([V_1]+[V_2])+|\beta_1|+|\beta_2|$
points in $X\setminus (V_1\cup V_2)$, $\undl x_{V_1}=\{p_{i,j}\}_{0<j\llt\alpha^1_i,i\glt1}$ a configuration
of $|\alpha^1|$ points in $V_1$, and $\undl x_{V_2}=\{q_{i,j}\}_{0<j\llt\alpha^2_i,i\glt1}$ a configuration
of $|\alpha^2|$ points in $V_2$. Given  an $\omega$-tamed almost complex structure $J$ on $X$  such that $V_1$ and $V_2$ are $J$-holomorphic,
 denote by $\cl^{\alpha^1,\beta^1,\alpha^2,\beta^2}(d,\undl x,J)$ the set
of rational $J$-holomorphic maps $f:\cb P^1\to X$ such that

$\bullet$ $f_*[\cb P^1]=d$;

$\bullet$ $\undl x\subset f(\cb P^1)$;

$\bullet$ $V_1\cup V_2$ does not contain $f(\cb P^1)$;

$\bullet$ $f(\cb P^1)$ has order of contact $i$ with $V_1$ at each points $p_{i,j}$
          and has order of contact $i$ with $V_2$ at each points $q_{i,j}$;

$\bullet$ $f(\cb P^1)$ has order of contact $i$ with $V_1$ at exactly $\beta^1_i$
          distinct points on $V_1\setminus\undl x_{V_1}$ and
          has order of contact $i$ with $V_2$ at exactly $\beta^2_i$
          distinct points on $V_2\setminus\undl x_{V_2}$.

Note that $\cl^{\alpha^1,\beta^1,\alpha^2,\beta^2}(d,\undl x,J)$
may contain components of positive dimension corresponding to non-simple maps. But for the generic $J$,
the set of simple maps in $\cl^{\alpha^1,\beta^1,\alpha^2,\beta^2}(d,\undl x,J)$ is $0$-dimensional.

\begin{lem}\label{lem:finite2}
Let $(X,\omega)$ be a compact connected   real symplectic $4$-manifold. Suppose that $V_1$ and $V_2$ are
two embedded real symplectic spheres in $X$ with $V_1\cdot V_2=0$, $[V_i]^2=-1$, $i=1,2$,
and assume $|\beta^i|=d \cdot[V_i]$, $i=1,2$.
Then for a generic choice of $J$, the set
$\cl^{\alpha^1,\beta^1,\alpha^2,\beta^2}(d,\undl x,J)$ is finite and contains
only simple maps that are all immersions.
\end{lem}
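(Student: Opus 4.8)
The plan is to generalize Proposition~\ref{prop:finite} from one exceptional sphere to the two disjoint ones $V_1,V_2$, the key simplification being that the hypothesis $|\beta^i|=d\cdot[V_i]$ pins the combinatorial data down completely and restores the full number of point constraints. \emph{First} I would record that $I\beta^i=\sum_k k\beta^i_k\glt\sum_k\beta^i_k=|\beta^i|$, so the relation $I\alpha^i+I\beta^i=d\cdot[V_i]=|\beta^i|$ forces $I\alpha^i\llt0$, whence $\alpha^i=0$ and $\beta^i=(d\cdot[V_i],0,\dots)$ for $i=1,2$. Thus $\undl x_{V_1}=\undl x_{V_2}=\emptyset$, the configuration is $\undl x=\undl x^\circ\subset X\setminus(V_1\cup V_2)$, and
$$
|\undl x^\circ|=c_1(X)\cdot d-1-d\cdot([V_1]+[V_2])+|\beta^1|+|\beta^2|=c_1(X)\cdot d-1 .
$$
So every $f\in\cl^{\alpha^1,\beta^1,\alpha^2,\beta^2}(d,\undl x,J)$ is a rational curve in class $d$ through $c_1(X)\cdot d-1$ points off $V_1\cup V_2$ meeting each $V_i$ transversally; since its image is not contained in $V_1\cup V_2$, positivity of intersections gives $d\cdot[V_i]\glt0$.

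\emph{Next} I would fix $J$ generic among the almost complex structures making both $V_1$ and $V_2$ holomorphic, the disjointness $V_1\cdot V_2=0$ ensuring that the contact conditions along $V_1$ and along $V_2$ are imposed independently. Exactly as in the single-divisor case recalled just before Lemma~\ref{lem:simap}, the cardinality of $\undl x^\circ$ is chosen so that $\cl^{\alpha^1,\beta^1,\alpha^2,\beta^2}(d,\undl x,J)$ has virtual dimension $0$; hence by the standard transversality for simple maps with transverse incidences, its subset of simple maps is a transversally cut-out, and therefore discrete, set.

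\emph{The main point} is to exclude non-simple maps and to upgrade discreteness to genuine finiteness. If $f$ were non-simple it would factor as $f=g\circ\phi$ with $\deg\phi=\delta\glt2$ and $g:\cb P^1\to X$ simple of class $d/\delta$, and the image of $g$ would have to contain all $c_1(X)\cdot d-1$ points of $\undl x^\circ$; but for generic $J$ the simple curves of class $d/\delta$ vary in a family of dimension $\frac{1}{\delta}c_1(X)\cdot d-1<c_1(X)\cdot d-1$ (using $c_1(X)\cdot d>0$), so their images cannot pass through $c_1(X)\cdot d-1$ general points, a contradiction; thus every map is simple. The same count rules out reducible Gromov limits: a stable map with $m\glt2$ rational components of classes $d_1,\dots,d_m$ summing to $d$ can carry at most $c_1(X)\cdot d_j-1$ of the points on its $j$-th component, hence meets at most $\sum_j(c_1(X)\cdot d_j-1)=c_1(X)\cdot d-m<c_1(X)\cdot d-1=|\undl x^\circ|$ of them, too few to pass through $\undl x^\circ$ (a component mapped onto some $V_i$ carries none of these points and only worsens the deficit). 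Therefore the Gromov compactification of the simple locus acquires no boundary strata, so that locus is compact and $0$-dimensional, hence finite.

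\emph{Finally}, the immersion property follows from genericity of $J$ exactly as in the proposition giving the node count $\delta=\frac{1}{2}(d^2-c_1(X)\cdot d+2)$: for generic $J$ a simple rational $J$-holomorphic curve in a $4$-manifold has only transverse double points as singularities, so in particular it is an immersion, and the first-order contact with each $V_i$ introduces no further singular behaviour. I expect the only delicate part to be the transversality-and-compactness bookkeeping of the third step carried out inside the relative framework of \cite{ip2004,bp2014}; the numerical input $|\beta^i|=d\cdot[V_i]$, which restores the full $c_1(X)\cdot d-1$ point constraints, is precisely what renders both the multiple-cover and the reducible-limit exclusions immediate.
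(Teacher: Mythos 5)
Your proof is correct and follows essentially the same route as the paper's: the hypothesis $|\beta^i|=d\cdot[V_i]$ restores the full $c_1(X)\cdot d-1$ point constraints, a dimension count excludes multiple covers, and Gromov compactness plus the same count rules out reducible or non-simple limits, giving finiteness. Your explicit preliminary reduction to $\alpha^i=0$, $\beta^i=(d\cdot[V_i],0,\dots)$ is only a presentational refinement of what the paper uses implicitly.
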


\begin{proof}
Suppose that $\cl^{\alpha^1,\beta^1,\alpha^2,\beta^2}(d,\undl x,J)$
contains a non-simple map which factors through a non-trivial ramified covering
of degree $\delta$ of a simple map $f_0:\cb P^1\to X$.
Let $d_0$ denotes the homology class $(f_0)_*[\cb P^1]$.
Since $f_0(\cb P^1)$ passes through $c_1(X)\cdot d -1 = \delta c_1(X)\cdot d_0-1$ points,
we have
$$
c_1(X)\cdot d_0-1\glt\delta c_1(X)\cdot d_0-1\glt0,
$$
which is impossible.

Suppose that $\cl^{\alpha^1,\beta^1,\alpha^2,\beta^2}(d,\undl x,J)$
contains infinitely many simple maps. By Gromov compactness Theorem,
There exists a sequence $(f_n)_{n\glt0}$ of simple maps in
$\cl^{\alpha^1,\beta^1,\alpha^2,\beta^2}(d,\undl x,J)$ which converges
to some $J$-holomorphic map $\bar f:\bar C\to X$.
By genericity of $J$, the set of simple maps in
$\cl^{\alpha^1,\beta^1,\alpha^2,\beta^2}(d,\undl x,J)$ is discrete.
Hence either $\bar C$ is reducible, or $\bar f$ is non-simple.
Let $\bar C_1$,...,$\bar C_m$, $\bar C_1^1$,...,$\bar C_{m_1}^1$,
$\bar C_1^2$,...,$\bar C_{m_2}^2$
be the irreducible components of $\bar C$, labeled in such a way that

$\bullet$ $\bar f(\bar C_i)\nsubseteq V_1\cup V_2$;

$\bullet$ $\bar f(\bar C_i^j)\subset V_j$ and $(\bar f)_*[\bar C_i^j]=k_i^j[V_j]$, $j=1,2$.\\
Define $k^j=\sum^{m_j}_{i=1}k_i^j$, $j=1,2$.
The restriction of $\bar f$ to $\cup_{i=1}^m\bar C_i$ is subject to
$c_1(X)\cdot d-1-d\cdot([V_1]+[V_2])+|\beta^1|+|\beta^2|$ points constrains,
so we have
$$
c_1(X)\cdot(d-k^1[V_1]-k^2[V_2])-m\glt c_1(X)\cdot d-1-d\cdot([V_1]+[V_2])+|\beta^1|+|\beta^2|.
$$
Since both $V_1$ and $V_2$ are  embedded real symplectic spheres, from adjunction formula,
we can get $c_1(X)\cdot[V_i]=1$, $i=1,2$ .
Hence we obtain
$$
c_1(X)\cdot d-k^1-k^2-m\glt c_1(X)\cdot d-1-d\cdot([V_1]+[V_2])+|\beta^1|+|\beta^2|.
$$
From the assumption of Lemma, we have
$$
0\glt  m+k^1+k^2-1 .
$$
Therefore, we have $m=1$, $k^1=k^2=0$.
The map $\bar f$ has to be a non-simple map which factors through a non-trivial
ramified covering of a simple map $f_0:\cb P^1\to X$.
But $f_0$ is subject to more point constraints, which provides a contradiction.
\end{proof}

Let $X_\rb$ be a compact real symplectic $4$-manifold, and suppose
$y_1$, $y_2\in X\setminus\rb X$ is a $\tau$-conjugated pair.
Denote by $p:{X}_{0,1}\to X$ the projection of the real symplectic blow-up of $X$ at $y_1$, $y_2$.
Perform real symplectic cut of $X$ at the $\tau$-conjugated pair $y_1$, $y_2$. We get
$$
\bar{X}^+=\bar{X}^{+1}\sqcup \bar{X}^{+2}\cong\pb^2\sqcup\pb^2,\,\,\,\, \bar{X}^-\cong X_{0,1}.
$$
Both $\bar{X}^+$ and $\bar{X}^-$ contain two common real symplectic submanifolds $V_1$, $V_2$ of real codimension $2$.
Let $V=V_1\sqcup V_2$.
In $\bar{X}^+$, $V_1\cong H_1$, $V_2\cong H_2$, where $H_i$ is the hyperplane of $\bar{X}^{+i}$, $i=1,2$.
$V_1\cong E_1$ and $V_2\cong E_2$  are the associated exceptional divisors in $\bar{X}^-$  at $y_i$, $i=1,2$, respectively.

Let $\zl$ be the real symplectic sum of the two real symplectic manifolds $\bar{X}^+$ and $\bar{X}^-$ along $V$,
and $d\in H_2(\zl_\lambda;\zb)$. Denote ${\undl x}(\lambda)$, $J$,
$X_\sharp=\bar{X}^+\cup_V \bar{X}^-$, $\cl(d,{\undl x}(0),J_0)$, $C_{+i}$, $i=1,2$,
as in subsection $\ref{subsec:bpform}$. Similar to the proof of Proposition $\ref{prop:sum11}$, we can prove

\begin{prop}\label{prop:sum21}
Assume $\undl x(0)\cap \bar{X}^{+i}$ contains at most one point,
$\undl x(0)\cap \bar{X}^-\neq\emptyset$ if $\undl x(0)\cap \bar{X}^+\neq\emptyset$.
Let ${\undl x}(0)$, $d$, $J$ be given above.
Then for a generic $J_0$, the set $\cl(d,{\undl x}(0),J_0)$ is finite,
and only depends on ${\undl x}(0)$ and $J_0$.
Given an element $\bar f:\bar C\to X_\sharp$  of $\cl(d, \undl x(0),J_0)$,
the restriction of $\bar f$ to any component of $\bar C$ is a simple map,
and no irreducible component of $\bar C$ is entirely mapped into $V$.
Moreover, the followings are true:

(1) If ${\undl x}(0)\cap \bar{X}^{+i}=\{p_i\}$, $i=1,2$,   the curve $C_-$ is irreducible and $\bar f|_{C_-}$ is an element of
$\cl^{0,\delta_1,0,\delta_1}(p^!d-[E_1]-[E_2],{\undl x}(0)\cap \bar{X}^-,J_0)$.  The curves $C_{+i}$, $i=1,2$, are irreducible
and the image of $C_{+i}$ represses $[H_i]$ and passes $\{p_i\}$, respectively. The map $\bar f$ is the limit of a unique element
of $\cl(d, {\undl x}(\lambda),J_\lambda)$ as $\lambda$ goes to $0$.

(2) If ${\undl x}(0)\cap \bar{X}^{+i}=\emptyset$, $i=1,2$, then $C_{+i}=\emptyset$, the curve $C_-$ is irreducible and
$\bar f|_{C_-}$ is an element of  $\cl^{0,0,0,0}(p^!d,{\undl x}(0)\cap \bar{X}^-,J_0)$. The map $\bar f$ is the limit of a
unique element of   $\cl(d, {\undl x}(\lambda),J_\lambda)$ as $\lambda$ goes to $0$.

\end{prop}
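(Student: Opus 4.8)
The plan is to follow the proof of Proposition~\ref{prop:sum11} almost verbatim, exploiting the fact that the two cut loci $V_1$ and $V_2$ are disjoint so that the multiplicity-matching analysis decouples into two independent copies of the single-point situation. Since $\tau$ exchanges $\bar{X}^{+1}$ and $\bar{X}^{+2}$ and the configuration $\undl x(0)$ is $\tau$-invariant, $\undl x(0)\cap\bar{X}^{+1}$ and $\undl x(0)\cap\bar{X}^{+2}$ have equal cardinality, so the hypothesis ``at most one point'' leaves exactly the two cases in the statement. First I would invoke \cite[Example 11.4 and Lemma 14.6]{ip2004} to rule out any irreducible component of $\bar C$ being entirely mapped into $V=V_1\sqcup V_2$. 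Writing $\bar f_*[C_{+i}]=a_i[H_i]$ in $\bar{X}^{+i}\cong\pb^2$ and $\bar f_*[C_-]=p^!d-b_1[E_1]-b_2[E_2]$ with $b_i\glt0$, and using $V_1\cdot V_2=0$, $[E_i]^2=-1$, $p^!d\cdot[E_i]=0$, the matching of contact multiplicities along each $V_i$ gives
\begin{equation*}
a_i=\bar f_*[C_{+i}]\cdot[H_i]=(p^!d-b_1[E_1]-b_2[E_2])\cdot[E_i]=b_i,\qquad i=1,2.
\end{equation*}

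For case (2), where $\undl x(0)\cap\bar{X}^+=\emptyset$, the curve $\bar f(C_-)$ carries all $c_1(X)\cdot d-1$ point constraints. I would decompose $C_-$ into irreducible components, isolate the non-simple ones, each factoring through a non-trivial ramified cover of degree $\delta_i\glt2$ of a simple map, and write down the dimension inequality exactly as in \eqref{3-6}. Because $c_1(\bar{X}^-)\cdot d_i\glt0$, $b_1,b_2\glt0$, and each $\delta_i\glt2$, this can hold only when $C_-$ is irreducible, $\bar f|_{C_-}$ is simple, and $b_1=b_2=0$; the last equality forces $C_{+i}=\emptyset$ and identifies $\bar f|_{C_-}$ as an element of $\cl^{0,0,0,0}(p^!d,\undl x(0)\cap\bar{X}^-,J_0)$.

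For case (1), where $\undl x(0)\cap\bar{X}^{+i}=\{p_i\}$, the requirement that $\bar f(C_{+i})$ pass through $p_i$ forces $a_i=b_i\glt1$, and $\bar f(C_-)$ is now subject to $c_1(X)\cdot d-3$ point constraints. Using $c_1(\bar{X}^-)\cdot(p^!d-b_1[E_1]-b_2[E_2])=c_1(X)\cdot d-b_1-b_2$ (from $c_1(X_{0,1})=p^!c_1(X)-[E_1]-[E_2]$), I would reproduce the irreducibility and simplicity arguments of Proposition~\ref{prop:sum11}, Case~II, a non-simple cover of degree $\delta\glt2$ being excluded by an inequality analogous to \eqref{3-2}. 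The dimension count
\begin{equation*}
c_1(\bar{X}^-)\cdot(p^!d-b_1[E_1]-b_2[E_2])-1=c_1(X)\cdot d-b_1-b_2-1\glt c_1(X)\cdot d-3
\end{equation*}
then gives $b_1+b_2\llt2$, which with $b_1,b_2\glt1$ yields $b_1=b_2=1$. Hence $\bar f_*[C_{+i}]=[H_i]$, so $\bar f|_{C_-}$ lies in $\cl^{0,\delta_1,0,\delta_1}(p^!d-[E_1]-[E_2],\undl x(0)\cap\bar{X}^-,J_0)$, and by Lemma~\ref{lem:p2set} each $C_{+i}$ is the unique line in $\bar{X}^{+i}$ through $p_i$ meeting $V_i$ at the contact point of $C_-$. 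In both cases the finiteness of $\cl(d,\undl x(0),J_0)$ follows from Lemma~\ref{lem:finite2}, whose hypotheses $[V_i]^2=-1$, $V_1\cdot V_2=0$ and $|\beta^i|=d\cdot[V_i]$ are verified directly ($|\beta^i|=0=p^!d\cdot[E_i]$ in case (2), and $|\beta^i|=1=(p^!d-[E_1]-[E_2])\cdot[E_i]$ in case (1)).

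Finally, the count of elements of $\cl(d,\undl x(\lambda),J_\lambda)$ converging to a given $\bar f$ comes from the local smoothing analysis at the end of the proof of Proposition~\ref{prop:sum11}: at each node of $\bar C$ lying over $V_i$ the orders of contact of $C_{+i}$ and $C_-$ with $V_i$ are both $1$, so there is a unique smoothing parameter $\mu(\lambda)=\lambda/(mn)$; since each $C_{+i}$ is determined by $\bar f|_{C_-}$ and $p_i$, the map $\bar f$ is the limit of a unique $f_\lambda$. The main obstacle is the bookkeeping that simultaneously pins down $b_1$ and $b_2$, but because $V_1\cap V_2=\emptyset$ the constraints decouple and each exceptional curve is controlled exactly as the single $[E]$ is in Proposition~\ref{prop:sum11}, so no genuinely new difficulty arises.
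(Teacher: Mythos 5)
Your proposal is correct and follows essentially the same route as the paper, which itself only indicates that Proposition \ref{prop:sum21} is proved ``similar to the proof of Proposition \ref{prop:sum11}'': you decouple the two disjoint cut loci, run the same multiplicity-matching identity $a_i=b_i$, the same ramified-cover dimension inequalities to force irreducibility, simplicity and $b_1=b_2=0$ (resp.\ $b_1=b_2=1$), and the same local smoothing analysis for uniqueness of the deformation. The observation that $\tau$-invariance of $\undl x(0)$ reduces the hypothesis to exactly the two stated cases, and the explicit verification of the hypotheses of Lemma \ref{lem:finite2}, are welcome details that the paper leaves implicit.
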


Base on Proposition $\ref{prop:sum21}$, similar
to Proposition $\ref{prop:welrep11}$, we can prove

\begin{prop}\label{prop:welrep21}
Let $X_\rb$ be a compact real symplectic $4$-manifold,
$d\in H_2(X;\zb)$ such that $c_1(X)\cdot d>0$ and $\tau_*d=-d$.
Suppose $y_1$, $y_2\in (X\setminus\rb X)$ is a $\tau$-conjugated pair.
Denote by $p:X_{0,1}\to X$ the projection of the real symplectic blow-up of $X$ at $y_1$, $y_2$.
Then

(1) If $\undl x(0)\cap \bar{X}^{+i}=\emptyset$, $i=1,2$, then
  $$
  W_{X_\rb}(d,s)=\sum_{C_-\in\rb\cl^{0,0,0,0}(p^!d,{\undl x}(0)\cap \bar{X}^-,J_0)}(-1)^{m_{X_{0,1}}(C_-)}.
  $$

(2) If $\undl x(0)\cap \bar{X}^{+i}=\{p_i\}$, $i=1,2$, $\undl x(0)\cap \bar{X}^-\neq\emptyset$, then
$$
  W_{X_\rb}(d,s)=\sum_{C_-\in\rb\cl^{0,\delta_1^c,0,\delta_1^c}
(p^!d-[E_1]-[E_2],{\undl x}(0)\cap \bar{X}^-,J_0)}(-1)^{m_{X_{0,1}}(C_-)},
$$
where $E_1$, $E_2$ are the exceptional divisors.

\end{prop}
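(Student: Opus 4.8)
The plan is to follow verbatim the scheme used for Proposition~\ref{prop:welrep11}, replacing the single component $C_+\subset\bar X^+$ by the conjugate pair of components living in $\bar X^{+1}\sqcup\bar X^{+2}\cong\pb^2\sqcup\pb^2$. First I would equip the small disc $\Delta$ with the standard complex conjugation and, using subsection~\ref{subsec:realsympcut}, put on the symplectic sum $\pi:\zl\to\Delta$ the real structure $\tau_\zl$ induced by the real structures $\tau_\pm$ on the cuts $\bar X^\pm$, so that $\pi$ is real. The decisive structural point is that, because $y_1$, $y_2$ form a $\tau$-conjugated pair, the involution $\tau_+$ exchanges the two copies $\bar X^{+1}\leftrightarrow\bar X^{+2}$ and the two divisors $V_1\leftrightarrow V_2$; in particular the chosen real sections meet $\bar X^+$ in a conjugate pair $p_1\in\bar X^{+1}$, $p_2=\tau(p_1)\in\bar X^{+2}$. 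I would then fix a real limit curve $\bar f:\bar C\to X_\sharp$ in $\rb\cl(d,\undl x(0),J_0)$ and read off its components from Proposition~\ref{prop:sum21}.

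In the case $\undl x(0)\cap\bar X^{+i}=\emptyset$, Proposition~\ref{prop:sum21}(2) gives $C_{+i}=\emptyset$, so that $\bar C=C_-$ and $\bar f|_{C_-}\in\cl^{0,0,0,0}(p^!d,\undl x(0)\cap\bar X^-,J_0)$; since $p^!d\cdot[E_i]=0$, the curve $C_-$ does not meet $V$ at all. Reality of $\bar f$ is then equivalent to reality of $\bar f|_{C_-}$, and the masses agree trivially, $m_{X_\sharp}(\bar f(\bar C))=m_{X_{0,1}}(\bar f|_{C_-})$. This yields a mass-preserving bijection between $\rb\cl(d,\undl x(0),J_0)$ and $\rb\cl^{0,0,0,0}(p^!d,\undl x(0)\cap\bar X^-,J_0)$, and formula~(1) follows from the deformation invariance of the Welschinger number together with the uniqueness of the real smoothing asserted by Proposition~\ref{prop:sum21}.

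In the case $\undl x(0)\cap\bar X^{+i}=\{p_i\}$, Proposition~\ref{prop:sum21}(1) says each $C_{+i}$ is irreducible with image $[H_i]$ passing through $p_i$, and Lemma~\ref{lem:p2set} (applied in $\bar X^{+i}\cong\pb^2$ with $V_i\cong H_i$) identifies $\bar f|_{C_{+i}}$ with the unique element of $\cl^{\delta_1,0}([H_i],\{p_i\}\cup\undl x_{V_i},J_0)$, an embedded line meeting $V_i$ once. The reality constraint forces $C_{+2}=\tau(C_{+1})$, so once a real $C_-\in\rb\cl^{0,\delta_1^c,0,\delta_1^c}(p^!d-[E_1]-[E_2],\undl x(0)\cap\bar X^-,J_0)$ is fixed, there is exactly one way to complete it to a real $\bar f$: attach the unique line through $p_1$ prescribed by Lemma~\ref{lem:p2set} matching the node on $V_1$, and take its $\tau$-conjugate over $V_2$. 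The superscript $c$ is dictated by the geometry: since $V_1$, $V_2$ are not real but conjugate, a real curve meets $V_1\sqcup V_2$ in conjugate pairs of contact points.

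Finally I would check the mass bookkeeping, which is where the only genuine subtlety lies. Each $C_{+i}$ is an embedded line and so carries no node, whence $m_{\bar X^{+1}}(\bar f|_{C_{+1}})=m_{\bar X^{+2}}(\bar f|_{C_{+2}})=0$ and $m_{X_\sharp}(\bar f(\bar C))=m_{X_{0,1}}(\bar f|_{C_-})$. The two nodes of $\bar C$ lying over $V_1$ and $V_2$ form a $\tau$-conjugate pair and, by the local deformation analysis at the end of the proof of Proposition~\ref{prop:sum11}, no node survives in a neighborhood of $V\cap\bar f(\bar C)$ when $\bar f$ is smoothed; hence they contribute nothing to the mass of the smoothed curve. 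Combining the bijection with this mass equality and invoking Proposition~\ref{prop:sum21} for the existence and uniqueness of the real smoothing gives formula~(2). The main obstacle throughout is the reality bookkeeping for the conjugate pair $C_{+1}$, $C_{+2}$: one must verify simultaneously that $\tau$ exchanges them, that the contact profile is genuinely of conjugate type $c$, and that the resulting conjugate pair of nodes is mass-neutral, everything else being a direct transcription of the argument for Proposition~\ref{prop:welrep11}.
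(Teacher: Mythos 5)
Your proposal is correct and follows exactly the route the paper intends: the paper gives no separate argument for this proposition, stating only that it follows from Proposition~\ref{prop:sum21} ``similar to Proposition~\ref{prop:welrep11}'', and your write-up is a faithful expansion of that transcription, including the two points that actually need checking --- that the conjugate components $C_{+1}$, $C_{+2}$ lie in disjoint copies of $\pb^2$ and hence contribute no extra node to the mass, and that the contact points with $V_1\sqcup V_2$ form a $\tau$-conjugate pair, which is why the tangency data carry the superscript $c$ rather than $r$.
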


Next, perform the real symplectic cut of $X_{0,1}$ along $E_1$, $E_2$,
where $E_i$ is the exceptional divisor of the blow-up at $y_i$, $i=1,2$, respectively.
We obtain two real symplectic manifolds $\overline{\tilde X}^+=\overline{\tilde X}^{+1}\sqcup \overline{\tilde X}^{+2}$ and $\overline{\tilde X}^-$ as follow:
$$
\overline{\tilde X}^+\cong\pb_{E_1}(\ol(-1)\oplus\ol)\sqcup\pb_{E_2}(\ol(-1)\oplus\ol), \,\,\,\,
\overline{\tilde X}^-\cong X_{0,1}.
$$
Both $\overline{\tilde X}^+$ and $\overline{\tilde X}^-$ contain a common real symplectic submanifold
$V_1$, $V_2$ of real codimension $2$ respectively.
Let $V=V_1\sqcup V_2$.
In $\overline{\tilde X}^+$, $V_1\cong E^1_{\infty}$,~$V_2\cong E^2_\infty$,
where $E^i_\infty$ is the infinity section of $\pb_{E_i}(\ol(-1)\oplus\ol)\to E_i$, respectively.
$V_1\cong E_1$, $V_2\cong E_2$ are the exceptional divisors in $\overline{\tilde X}^-$.

Let $\tilde\zl$ be the symplectic sum of the two real symplectic manifolds $\overline{\tilde X}^+$ and $\overline{\tilde X}^-$ along $V$.
Let $p^!d-[E_1]-[E_2]\in H_2(\tilde\zl_\lambda;\zb)$, where $d\in H_2(X;\zb)$.
Choose $\tilde{\undl x}_1(\lambda)$
a set of $c_1(X)\cdot d-3$ real symplectic sections $\Delta\to\tilde\zl$ such that
$\tilde{\undl x}_1(0)\cap V=\emptyset$, $\tilde{\undl x}_1(0)\cap\overline{\tilde X}^+=\emptyset$.
Choose $\tilde{\undl x}_2(\lambda)$
a set of $c_1(X)\cdot d-1$ real symplectic sections $\Delta\to\tilde\zl$ such that
$\tilde{\undl x}_2(0)\cap V=\emptyset$, $\tilde{\undl x}_2(0)\cap\overline{\tilde X}^+=\emptyset$.
Choose a generic almost complex structure $\tilde J$ on $\tilde \zl$ as above.

Denote $\tilde X_\sharp=\overline{\tilde X}^+\cup_V \overline{\tilde X}^-$,
$\cl(p^!d-[E_1]-[E_2],\tilde{\undl x}_1(0),\tilde J_0)$,
$\cl(p^!d,\tilde{\undl x}_2(0),\tilde J_0)$, $C_{+i}$,
$i=1,2$ and $C_-$ as in subsection $\ref{subsec:bpform}$.
The same argument as in the proof of Proposition $\ref{prop:sum11}$ and Proposition $\ref{prop:welrep11}$
shows that Proposition $\ref{prop:sum22}$ and Proposition $\ref{prop:welrep22}$ hold.

\begin{prop}\label{prop:sum22}
Let $\tilde{\undl x}(0)$, $p^!d-[E_1]-[E_2]$, $\tilde J$ be given above.
Then we have

 (1)  For a generic $\tilde J_0$,
  the set $\cl(p^!d-[E_1]-[E_2],\tilde{\undl x}_1(0),\tilde J_0)$ is finite,
  and only depends on $\tilde{\undl x}_1(0)$ and $\tilde J_0$.
  Given an element $\bar f:\bar C\to \tilde X_\sharp$  of
  $\cl(p^!d-[E_1]-[E_2],\tilde{\undl x}_1(0),\tilde J_0)$,
  the restriction of $\bar f$ to any component of $\bar C$ is a simple map,
  and no irreducible component of $\bar C$ is entirely mapped into $V$.
  Moreover, the curve $\bar f|_{C_-}$ is irreducible and $\bar f|_{C_-}$ is an element of
  $\cl^{0,\delta_1,0,\delta_1}(p^!d-[E_1]-[E_2],\tilde{\undl x}_1(0)\cap\overline{\tilde X}^-,\tilde J_0)$.
  $C_{+i}$, $i=1,2$, are irreducible and the image of $C_{+i}$ under $\bar{f}$ represents the fiber class $[F_i]$ of $\pb_{E_i}(\ol(-1)\oplus\ol)\to E_i$, respectively.
  The map $\bar f$ is the limit of a unique element of
  $\cl(p^!d-[E_1]-[E_2], \tilde{\undl x}_1(\lambda),\tilde J_\lambda)$ as $\lambda$ goes to $0$.

(2) For a generic $\tilde J_0$, the set $\cl(p^!d,\tilde{\undl x}_2(0),\tilde J_0)$ is finite,
  and only depends on $\tilde{\undl x}_2(0)$ and $\tilde J_0$.
  Given an element $\bar f:\bar C\to \tilde X_\sharp$  of $\cl(p^!d,\tilde{\undl x}_2(0),\tilde J_0)$,
  the restriction of $\bar f$ to any component of $\bar C$ is a simple map,
  and no irreducible component of $\bar C$ is entirely mapped into $V$.
  Moreover, the curve $\bar f|_{C_-}$ is irreducible and $\bar f|_{C_-}$ is an element of
  $\cl^{0,0,0,0}(p^!d,\tilde{\undl x}_2(0)\cap\overline{\tilde X}^-,\tilde J_0)$.
  The map $\bar f$ is the limit of a unique element of
  $\cl(p^!d, \tilde{\undl x}_2(\lambda),\tilde J_\lambda)$ as $\lambda$ goes to $0$.

\end{prop}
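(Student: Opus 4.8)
The plan is to transcribe the proofs of Proposition~\ref{prop:sum11} and Proposition~\ref{prop:sum12} to the present two-component situation, running the homology bookkeeping over the two disjoint bundle pieces $\overline{\tilde X}^{+1}$, $\overline{\tilde X}^{+2}$ at once. First I would appeal to \cite[Example 11.4 and Lemma 14.6]{ip2004} to ensure that no irreducible component of $\bar C$ is carried entirely into $V=V_1\sqcup V_2$, and split $\bar C$ into $C_-\to\overline{\tilde X}^-\cong X_{0,1}$ together with $C_{+i}\to\overline{\tilde X}^{+i}$, $i=1,2$. Setting $\bar f_*[C_{+i}]=a_i[F_i]+b_i[E^i_\infty]$ and $\bar f_*[C_-]=p^!d-k_1[E_1]-k_2[E_2]$, the two intersection identities on each bundle determine the unknowns: matching multiplicities along $V_i=E^i_\infty=E_i$ gives $a_i+b_i=k_i$, while pairing with the disjoint section $E^i_0$ (which records the intersection of the total class with the exceptional divisor $E_i\subset X_{0,1}$) gives $a_i=(\text{total class})\cdot[E_i]$, namely $a_i=1$ in case~(1) and $a_i=0$ in case~(2).

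Next I would establish that $C_-$ is irreducible and simple by the multiple-cover estimate of Proposition~\ref{prop:sum11}: if $C_-$ were reducible or factored a degree-$\delta\geq2$ cover, the point constraints would force an inequality whose left-hand side $\sum(1-\delta_j)\,c_1(\overline{\tilde X}^-)\cdot d_j$ is $\leq 0$ while its right-hand side $m+k_1+k_2-1$ (with $m$ the number of components) is positive. Using $c_1(\overline{\tilde X}^-)\cdot(p^!d-k_1[E_1]-k_2[E_2])=c_1(X)\cdot d-k_1-k_2$ and the number of marked points ($c_1(X)\cdot d-3$ in case~(1), $c_1(X)\cdot d-1$ in case~(2)), the surviving constraint reads $k_1+k_2\leq 2$ in case~(1) and $k_1+k_2\leq 0$ in case~(2). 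Combined with $a_i=1$, $k_i=a_i+b_i\geq1$ in case~(1), this pins $k_1=k_2=1$, $b_i=0$, so $\bar f_*[C_{+i}]=[F_i]$ and $\bar f|_{C_-}\in\cl^{0,\delta_1,0,\delta_1}(p^!d-[E_1]-[E_2],\ldots)$; in case~(2) it forces $k_1=k_2=0$, hence $b_i=0$, $C_{+i}=\emptyset$ and $\bar f|_{C_-}\in\cl^{0,0,0,0}(p^!d,\ldots)$.

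Finiteness of $\cl(\,\cdot\,,\tilde J_0)$ then follows from the finiteness of the relative moduli space on the $-$ side, which is Lemma~\ref{lem:finite2}: $V_1,V_2$ are disjoint embedded $(-1)$-spheres and $|\beta^i|$ equals the intersection of the $C_-$-class with $E_i$, so the hypotheses hold verbatim. For the ``limit of a unique element'' claim I would reproduce the Ionel--Parker local smoothing model $\{zw=\mu(\lambda)\}$ at each node of $\bar C$ over $V$ exactly as at the end of the proof of Proposition~\ref{prop:sum11}: by Lemma~\ref{lem:tp2set} each $C_{+i}$ is the unique embedded fiber meeting $V_i$ once transversally, so there is a single gluing parameter at that node and hence a unique nearby smooth curve; this is also the input that equates the masses in the companion Proposition~\ref{prop:welrep22}.

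The hard part will be the homology bookkeeping rather than any analysis: one must correctly decide, on each bundle $\overline{\tilde X}^{+i}\cong\pb_{E_i}(\ol(-1)\oplus\ol)$, that $E^i_\infty$ governs the gluing multiplicity while $E^i_0$ carries the pairing with $E_i\subset X_{0,1}$, and then check that the linear system $a_i+b_i=k_i$, $a_i=(\text{total class})\cdot[E_i]$ together with the point-count inequality is tight enough to force $b_i=0$ in both cases. Once these numerics are in place, the remainder---excluding components in $V$, the cover estimate, and the gluing count---is a word-for-word repetition of the single-point arguments, carried out over the two pieces.
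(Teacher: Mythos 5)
Your proposal is correct and takes essentially the same route as the paper, which itself only remarks that Proposition~\ref{prop:sum22} follows by ``the same argument as in the proof of Proposition~\ref{prop:sum11}'' (with Proposition~\ref{prop:sum12} as the single-divisor template); your homology bookkeeping ($a_i+b_i=k_i$ from matching along $V_i$, $a_i=(\text{total class})\cdot[E_i]$ from pairing with $E^i_0$, then $k_1+k_2\le 2$ resp.\ $\le 0$ from the point count) reproduces the intended computation and correctly pins down $b_i=0$ in both cases. Your appeals to Lemma~\ref{lem:finite2} for finiteness, to Lemma~\ref{lem:tp2set} for the fiber components, and to the Ionel--Parker gluing model for uniqueness of the smoothing are exactly the ingredients the paper intends.
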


\begin{prop}\label{prop:welrep22}
Let $X_\rb$ be a compact real symplectic $4$-manifold,
$d\in H_2(X;\zb)$ such that $c_1(X)\cdot d>0$ and $\tau_*d=-d$.
Suppose $y_1$, $y_2\in (X\setminus\rb X)$ is a $\tau$-conjugated pair.
Denote by $p:X_{0,1}\to X$ the projection of the real symplectic blow-up of $X$ at $y_1$, $y_2$.
Then
$$
W_{X_{0,1}}(p^!d,s) =\sum_{C_-\in\rb\cl^{0,0,0,0}
(p^!d,\tilde{\undl x}_2(0)\cap\tilde X^-,\tilde J_0)}
(-1)^{m_{X_{0,1}}(C_-)}.
$$
Moreover, if $s\geq 1$, then
\begin{eqnarray*}
& & W_{X_{0,1}}(p^!d-[E_1]-[E_2],s-1) \\
& & \\
  & =  & \sum_{C_-\in\rb\cl^{0,\delta_1^c,0,\delta_1^c}
(p^!d-[E_1]-[E_2],\tilde{\undl x}_1(0)\cap\tilde X^-,\tilde J_0)}
(-1)^{m_{X_{0,1}}(C_-)},
\end{eqnarray*}
where $E_1$, $E_2$ are the exceptional divisors.
\end{prop}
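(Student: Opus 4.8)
The plan is to repeat the strategy of Proposition~\ref{prop:welrep11}, now applied to the second-level real degeneration of $X_{0,1}$ along $E_1\sqcup E_2$, feeding in the refined component analysis of Proposition~\ref{prop:sum22} together with the enumeration on the projective bundles $\pb_{E_i}(\ol(-1)\oplus\ol)$ supplied by Lemma~\ref{lem:tp2set}. First I would equip $\Delta$ with the standard complex conjugation and use the induced real structure $\tau_{\tilde\zl}$ on $\tilde\zl$ arising from the real structures on $\overline{\tilde X}^+$ and $\overline{\tilde X}^-$, so that $\pi:\tilde\zl\to\Delta$ is real; I choose the sections $\tilde{\undl x}_1$, $\tilde{\undl x}_2$ real and take a real stable limit $\bar f:\bar C\to\tilde X_\sharp$. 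Since for $\lambda\neq0$ the fibre $\tilde\zl_\lambda$ is real symplectomorphic to $X_{0,1}$, the signed counts of curves in the nearby fibres are precisely $W_{X_{0,1}}(p^!d,s)$ and $W_{X_{0,1}}(p^!d-[E_1]-[E_2],s-1)$, so it suffices to show that these counts are preserved in the limit.

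For the first identity I would invoke part~(2) of Proposition~\ref{prop:sum22}: when $\tilde{\undl x}_2(0)\cap\overline{\tilde X}^+=\emptyset$ the components $C_{+1},C_{+2}$ are empty, so the whole limit $\bar f(\bar C)$ is recovered from $\bar f|_{C_-}\in\cl^{0,0,0,0}(p^!d,\tilde{\undl x}_2(0)\cap\overline{\tilde X}^-,\tilde J_0)$. This yields a bijection between the real limits and the real elements of $\rb\cl^{0,0,0,0}(\ldots)$, under which the mass is unchanged, $m_{\tilde X_\sharp}(\bar f(\bar C))=m_{\overline{\tilde X}^-}(\bar f|_{C_-})$. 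As each such $\bar f$ is the limit of a unique curve in the nearby fibre and the local model $zw=\mu(\lambda)$ at the gluing point shows that no node appears near $V$ during the smoothing, the signed count is preserved, giving the first formula.

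For the second identity I would use part~(1) of Proposition~\ref{prop:sum22}: when $\tilde{\undl x}_1(0)\cap\overline{\tilde X}^+=\emptyset$, each $C_{+i}$ is irreducible and maps to the fibre class $[F_i]$, while $\bar f|_{C_-}\in\cl^{0,\delta_1,0,\delta_1}(p^!d-[E_1]-[E_2],\tilde{\undl x}_1(0)\cap\overline{\tilde X}^-,\tilde J_0)$. By Lemma~\ref{lem:tp2set} the curve in class $[F_i]$ through the prescribed contact point is the unique embedded fibre, hence carries no node and is determined once $\bar f|_{C_-}$ is fixed. The crucial reality bookkeeping is that, since $\tau$ interchanges $\overline{\tilde X}^{+1}$ and $\overline{\tilde X}^{+2}$, the two contact points of $\bar f|_{C_-}$ with $V_1$ and $V_2$ must form a single $\tau$-conjugated pair; this is exactly the constraint recorded by the superscript $\delta_1^c$, and it accounts for the drop $s\mapsto s-1$ in the number of free conjugated pairs. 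Hence the real limits biject with the real elements of $\rb\cl^{0,\delta_1^c,0,\delta_1^c}(p^!d-[E_1]-[E_2],\ldots)$, the conjugate fibres $C_{+1},C_{+2}$ contribute no isolated real node, and again $m_{\tilde X_\sharp}(\bar f(\bar C))=m_{\overline{\tilde X}^-}(\bar f|_{C_-})$, giving the second formula.

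The step I expect to be the main obstacle is the conjugation bookkeeping in the second identity: one must verify that attaching the $\tau$-conjugate pair of fibres $C_{+1},C_{+2}$ and smoothing the two conjugate gluing nodes on $V_1,V_2$ neither creates nor destroys isolated real nodes, so that the mass is genuinely preserved and the contact points descend to an honest conjugated pair on $X_{0,1}$. The remaining dimension, finiteness and genericity inputs are routine transcriptions of the arguments in Propositions~\ref{prop:sum11} and~\ref{prop:welrep11}, so the real content lies in making the real structure on the gluing data precise.
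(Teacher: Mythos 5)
Your proposal is correct and follows essentially the same route as the paper, which itself disposes of this proposition by transcribing the arguments of Propositions~\ref{prop:sum11} and~\ref{prop:welrep11} to the degeneration of $X_{0,1}$ along $E_1\sqcup E_2$; your use of Proposition~\ref{prop:sum22}, Lemma~\ref{lem:tp2set}, the conjugate-pair bookkeeping for the tangency points on $V_1\sqcup V_2$, and the observation that the two conjugate fibres lie in disjoint components of $\overline{\tilde X}^+$ and hence contribute no isolated real node all match the intended argument. The only slight imprecision is attributing the shift $s\mapsto s-1$ to the conjugated tangency pair: that shift is already encoded in the choice of $c_1(X)\cdot d-3$ sections for $\tilde{\undl x}_1$, while the conjugated tangency pair is additional non-prescribed intersection data, but this does not affect the validity of the proof.
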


\begin{rem}
Proposition $\ref{prop:welrep21}$ and Proposition $\ref{prop:welrep22}$ implies
Theorem $\ref{thm:bpc}$.
\end{rem}

\section{Wall-crossing formula of Welschinger invariants}

\subsection{Wall-crossing formula}
Welschinger \cite{wel2005a}  introduced a new invariant $\theta_{X_{\mathbb{R}}}(d,s)$ to describe the variation
of Welschinger invariants when replacing a pair of real fixed points in the same component of $\rb X$
by a pair of $\tau$-conjugated points. Welschinger proved the following wall-crossing formula, Theorem 3.2 in \cite{wel2005a},

\begin{thm}(\cite{wel2005a})\label{thm:welwcf}
Let $(X,\omega,\tau)$ be a compact real symplectic $4$-manifold such that
$\rb X$ is connected, $d\in H_2(X;\zb)$ such that $c_1(X)\cdot d -1>0$ and $\tau_* d=-d$, and $s$ be an integer between
$1$ and $[\frac{c_1(X)\cdot d-1}{2}]$.
Then
$$
W_{X_\rb}(d,s-1)=W_{X_\rb}(d,s)+2\theta_{X_\rb}(d,s-1).
$$
\end{thm}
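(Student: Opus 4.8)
The plan is to prove the identity by a generic real one-parameter deformation of the point configuration, in which two real constraint points collide at a generic real point $p\in\rb X$ and re-emerge as a $\tau$-conjugated pair, and to show that the resulting jump in the signed count of real rational curves is localized on, and equals twice, Welschinger's $\theta$-invariant. Throughout I take $\theta_{X_\rb}(d,s-1)$ in its intrinsic meaning from \cite{wel2005a}: the signed count $\sum(-1)^{m}$ of real rational $J$-holomorphic curves in class $d$ that pass through a fixed generic configuration $\undl x_0$ of $r$ real points and $s-1$ conjugated pairs and carry an ordinary real double point at a fixed generic $p\in\rb X$ (equivalently, pass through $p$ with local multiplicity $2$). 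By the two Fredholm Propositions of Subsection \ref{subsec:weldef} together with the finiteness results (Proposition \ref{prop:finite}), for generic $(J,\undl x_0,p)$ this is a finite signed set whose members are nodal, with one double point at $p$.

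First I would fix a generic $J\in\rb\jl_\omega$, a generic $p\in\rb X$ and $\undl x_0$ as above, all mutually generic, and introduce moving constraints $c(t)$: for $t<0$ a pair of distinct real points $x_1(t),x_2(t)\in\rb X$ tending to $p$, and for $t>0$ a conjugated pair $y(t),\tau(y(t))$ tending to $p$. For $t<0$ the count over $\rb\cl(d,\undl x_0\cup c(t),J)$ equals $W_{X_\rb}(d,s-1)$ and for $t>0$ it equals $W_{X_\rb}(d,s)$; each is independent of $t$ on its side by Welschinger's invariance. Hence $W_{X_\rb}(d,s-1)-W_{X_\rb}(d,s)$ is exactly the total jump across $t=0$.

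Next I would localize this jump. By Gromov compactness a curve can contribute only if its image meets $p$ in the limit $t\to0$, and on the two-parameter family of real curves through $\undl x_0$ a dimension count shows that the generic solutions limit to curves meeting $p$ with a single branch tangent to the collision direction; these persist and vary continuously as the collision direction turns from real to imaginary, so they contribute equally to $W_{X_\rb}(d,s-1)$ and $W_{X_\rb}(d,s)$. A jump can therefore arise only from solutions in which the two constraint points lie on two distinct local branches coalescing into a real node at $p$, i.e. the curves enumerated by $\theta_{X_\rb}(d,s-1)$. For the bookkeeping it is convenient to organize this step through the real symplectic cut of Subsection \ref{subsec:realsympcut} at $p$: under the proper transform a real curve with a real node at $p$ becomes a real curve in $X_{1,0}$ meeting $E$ in two points, i.e. a member of class $p^!d-2[E]$, and by adjunction (as in Lemmas \ref{lem:p2set} and \ref{lem:tp2set}) this correspondence is a bijection. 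I stress that this realization of $\theta$ as a relative count along $E$ is a purely geometric identity, independent of the formula being proved, so no circularity is introduced.

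Finally I would compute the local contribution of each wall curve $C_0$. If its node at $p$ is non-isolated (two real branches), then for $t<0$ the two real points can be distributed on the two branches in two distinct ways, each producing a real smoothing through $c(t)$, so two curves appear on the $t<0$ side and none on the $t>0$ side; if the node is isolated (two conjugated branches), the real smoothing through the conjugated pair exists only for $t>0$, again contributing two curves on that side. In either case $C_0$ accounts for a net of two curves carrying the sign $(-1)^{m(C_0)}$, whence the total jump is $2\theta_{X_\rb}(d,s-1)$. \textbf{The main obstacle} is precisely the sign and mass bookkeeping of this last step: one must compare the masses of the two curves appearing on one side with $m(C_0)$ using the standard real smoothings $u^2+v^2=\pm\epsilon$ (isolated node) and $uv=\pm\epsilon$ (non-isolated node), and check that both node types contribute with one and the same overall sign, so that the jump is $+2\theta$ rather than a sum with fluctuating signs; establishing, via transversality of the relevant evaluation map at $C_0$, that these two branch-assignments exhaust the nearby solutions and that no further degeneration contributes is where the real enumerative analysis is concentrated.
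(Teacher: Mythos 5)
Your strategy---colliding two real constraint points at $p$ and localizing the jump on curves with a node at $p$---is genuinely different from the paper's route: the paper never analyzes the collision directly, but instead performs a real symplectic cut at $p$, pushing the two constraint points into a $\pb^2$ bubble glued to $X_{1,0}$ along $E$, so that the wall term appears as the two-line configurations of Proposition \ref{prop:wall1} and the sign bookkeeping is carried by the limit stable maps (Propositions \ref{prop:wall2}--\ref{prop:wall4}) rather than by a local deformation analysis. Your route is closer to Welschinger's original argument, and you correctly identify the wall curves, the bijection with curves of class $p^!d-2[E]$ in $X_{1,0}$, and the fact that each wall curve yields exactly two nearby solutions on exactly one side of the wall.

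However, there is a genuine gap, located exactly at the step you flag as ``the main obstacle,'' and your proposed way of closing it would fail. Deformations of an immersed rational curve never smooth a node (smoothing raises the geometric genus), so in the two-dimensional family of real rational curves through $\undl x_0$ the node of a wall curve $C_0$ persists and merely moves; the local models $uv=\pm\epsilon$ and $u^2+v^2=\pm\epsilon$ are not available. Indeed, since the points of $c(t)$ are antipodal about $p$ to leading order, the smoothing model would produce a single smooth nearby curve on each side of the wall and hence no jump at all; the correct mechanism is that the node slides off $p$ and each of the two branches picks up one constraint point, which is what produces the two solutions. Because the node persists, the two nearby curves inherit it: for a non-isolated node it contributes $0$ to their mass and they appear on the $W_{X_\rb}(d,s-1)$ side, while for an isolated node it contributes $+1$ to their mass and they appear on the $W_{X_\rb}(d,s)$ side. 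Consequently, with $m(C_0)$ read as the full mass of $C_0$ (node at $p$ included), the jump equals
$$
W_{X_\rb}(d,s-1)-W_{X_\rb}(d,s)
=2\Bigl[\sum_{\text{non-isolated node at }p}(-1)^{m(C_0)}
-\sum_{\text{isolated node at }p}(-1)^{m(C_0)}\Bigr],
$$
so the two node types do \emph{not} contribute with one and the same sign, contrary to your assertion; the right-hand side equals $2\theta_{X_\rb}(d,s-1)$ only under the convention that the node at $p$ is excluded from the mass, i.e.\ that $\theta$ is the blow-up count $W_{X_{1,0}}(p^!d-2[E],s-1)$ of Theorem \ref{thm:wall}. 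Your definition of $\theta$ leaves this convention unspecified, and your sketch gives no way to detect the compensation between the side of the wall on which the curves appear and the extra isolated node they carry---which is precisely what the paper's equations (\ref{4-3}) and (\ref{4-4}) record, producing the relative minus sign between (\ref{eq:wall2-1}) and (\ref{eq:wall2-2}) that Proposition \ref{prop:wall4} then reassembles into $W_{X_{1,0}}(p^!d-2[E],s-1)$. Until this local analysis is actually carried out (together with the dimension counts excluding reducible or multiply covered Gromov limits as $t\to0$, the analogue of Proposition \ref{prop:wall1}, which you only gesture at), the proposal does not prove the stated identity.
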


In algebraic geometry category, Itenberg, Kharlamov and Shustin \cite{iks2004} observed that The invariant $\theta_{X_{\mathbb{R}}}(d,s)$ can be considered as the Welschinger invariants on the
blow-up at the fixed real point.  In the following, we will use the degeneration technique to verify this observation for any symplectic $4$-manifolds.

Perform the real symplectic cut on $X$ at the real point $x\in\rb X$.
We can get
$$
\bar{X}^+\cong\pb^2\,\,\,\, \text{ and }\,\,\,\, \bar{X}^-\cong X_{1,0}.
$$
In this section, we assume that $d\in H_2(X,\mathbb{Z})$ such that $c_1(X)\cdot d \geq 4$ and $\tau_*d = -d$. Denote $\pi:\zl\to\Delta$, $\undl x(\lambda)$, $J$, $X_\sharp$,
$\cl(d,\undl x(0),J_0)$, $C_*$, $*=+,-$, as in subsection $\ref{subsec:bpform}$. First of all, we have

\begin{prop}\label{prop:wall1}
Assume that $\undl x(0)\cap \bar{X}^+=\{p_1,p_2\}$,
and $\undl x(0)\cap \bar{X}^-\neq\emptyset$.
Then for a generic $J_0$,
the set $\cl(d,\undl x(0),J_0)$ is finite, and only depends on $\undl x(0)$ and $J_0$.
Given an element $\bar f:\bar C\to X_\sharp$  of $\cl(d,\undl x(0),J_0)$,
the restriction of $\bar f$ to any component of $\bar C$ is a simple map,
and no irreducible component of $\bar C$ is entirely mapped into $V$.
Moreover,

(1) $C_+$ is irreducible and $\bar f(C_+)$ realizes the class $[H]$
    passing through $\{p_1,p_2\}$.
   The curve $C_-$ is irreducible and $\bar f|_{C_-}$ is an element of
    $\cl^{\delta_1,0}(p^!d-[E],\undl x(0)\cap \bar{X}^-\sqcup\{q\},J_0)$
    for some $q\in V$.
    The map $\bar f$ is the limit of a unique element of
    $\cl(d,\undl x(\lambda),J_\lambda)$ as $\lambda$ goes to $0$.

(2) $C_+$ has exactly two irreducible components and
    the image of each component realizes the class $[H]$
    and passing through one point of $\{p_1,p_2\}$.
    The curve $C_-$ is irreducible and $\bar f|_{C_-}$ is an element of
    $\cl^{0,2\delta_1}(p^!d-2[E],\undl x(0)\cap \bar{X}^-,J_0)$.
    The map $\bar f$ is the limit of a unique element of
    $\cl(d,\undl x(\lambda),J_\lambda)$ as $\lambda$ goes to $0$.

\end{prop}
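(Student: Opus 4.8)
The plan is to mirror the proof of Proposition \ref{prop:sum11}, since the only genuinely new feature here is the presence of \emph{two} constrained points $p_1,p_2$ on the $\bar{X}^+\cong\pb^2$ side, which forces a splitting phenomenon for $C_+$. First I would record the topological constraints. Writing $\bar f_*[C_+]=a[H]$ and $\bar f_*[C_-]=p^!d-b[E]$ with $a,b\glt0$, the equality of intersection multiplicities with $V$ at the nodes (third bullet of the Ionel--Parker structure recalled before Proposition \ref{prop:sum11}) gives
\[
a=\bar f_*[C_+]\cdot[H]=(p^!d-b[E])\cdot[E]=b.
\]
Since $\undl x(0)\cap\bar{X}^+=\{p_1,p_2\}\neq\emptyset$, some component of $\bar C$ must map into $\bar{X}^+$ through $p_1$ (or $p_2$), so $a=b\glt1$. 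That no component of $\bar C$ collapses into $V$ is quoted from \cite[Example 11.4 and Lemma 14.6]{ip2004} exactly as before.

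Next I would bound $b$ from above and pin down $C_-$. The curve $\bar f(C_-)$ passes through the $c_1(X)\cdot d-3$ points of $\undl x(0)\cap\bar{X}^-$; running the irreducibility estimate of Case I of Proposition \ref{prop:sum11} verbatim shows $C_-$ is irreducible, and the ramified-covering estimate of Case II shows $\bar f|_{C_-}$ is simple (this is where $c_1(X)\cdot d\glt4$ enters). The dimension inequality
\[
c_1(\bar{X}^-)\cdot(p^!d-b[E])-1=c_1(X)\cdot d-1-b\glt c_1(X)\cdot d-3
\]
then forces $b\llt2$, so $b\in\{1,2\}$.

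The case analysis on $b$ is the heart of the argument, and the $b=2$ step is where I expect the main obstacle. For $b=1$, $\bar f(C_+)$ is a degree-one rational curve through $p_1$ and $p_2$; by the positivity-of-intersection argument of Lemma \ref{lem:p2set} it is the unique line through these two points, hence irreducible, embedded, and meeting $V$ at a single point $q$. Since $q$ is determined once $p_1,p_2$ are fixed, it becomes a \emph{fixed} contact point for $C_-$, so $\bar f|_{C_-}\in\cl^{\delta_1,0}(p^!d-[E],\undl x(0)\cap\bar{X}^-\sqcup\{q\},J_0)$, which is conclusion (1). For $b=2$, $\bar f(C_+)$ has class $2[H]$ and meets $V$ in two order-one points, and I must exclude the irreducible conic through $p_1,p_2$. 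The mechanism is rigidity: $\cl(d,\undl x(0),J_0)$ is a finite set of Gromov limits of the isolated curves of $\cl(d,\undl x(\lambda),J_\lambda)$, so every limit configuration must be rigid; but $C_-$, being an isolated point of the $0$-dimensional relative space of Proposition \ref{prop:finite}, fixes its two contact points on $V\cong H$, and an irreducible conic through $p_1,p_2$ and two prescribed points of $H$ moves in a one-parameter family, contradicting rigidity. The same over-determination argument rules out a splitting into the line through $\{p_1,p_2\}$ plus one free line (which would force an over-constrained fixed-plus-free contact profile on the rigid $C_-$). Hence $C_+$ must split into two lines, each through exactly one of $p_1,p_2$ and each meeting $V$ at one now-free point, so both contacts are free and $\bar f|_{C_-}\in\cl^{0,2\delta_1}(p^!d-2[E],\undl x(0)\cap\bar{X}^-,J_0)$, which is conclusion (2).

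Finally, the uniqueness of the smoothing is handled exactly as at the end of the proof of Proposition \ref{prop:sum11}: in both cases every node of $\bar C$ lying on $V$ has contact order one, so the local equation $\mu(\lambda)=\lambda/(mn)$ has a unique solution at each node, and the product over the one (resp.\ two) nodes is again unique; thus $\bar f$ is the limit of a unique element of $\cl(d,\undl x(\lambda),J_\lambda)$. The asserted finiteness of $\cl(d,\undl x(0),J_0)$ and its dependence only on $\undl x(0)$ and $J_0$ then follow from the finiteness of the relevant relative spaces $\cl^{\delta_1,0}$ and $\cl^{0,2\delta_1}$ via Proposition \ref{prop:finite}, just as in Proposition \ref{prop:sum11}.
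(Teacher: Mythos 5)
Your overall strategy coincides with the paper's (degree bookkeeping giving $a=b\glt1$, dimension counts to control $C_-$, the dichotomy $b\in\{1,2\}$, and the smoothing/uniqueness analysis borrowed from Proposition \ref{prop:sum11}), but two steps do not go through as written. The first is your claim that the irreducibility estimate of Proposition \ref{prop:sum11} applies ``verbatim'' to show $C_-$ is irreducible. With only $c_1(X)\cdot d-3$ point constraints the same computation yields $\sum_{i=1}^k(1-\delta_i)c_1(\bar{X}^-)\cdot d_i\glt m+b-3$, hence only $m+b\llt 3$; since $b\glt1$ this still permits $m=2$, $b=1$, i.e.\ a reducible $C_-$. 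The paper kills this subcase by a separate argument: when $b=1$ the curve $\bar f(C_+)$ is the unique line through $p_1,p_2$, so $C_-$ is forced to pass through the additional point $q=\bar f(C_+)\cap V$, hence through $c_1(X)\cdot d-2=c_1(\bar{X}^-)\cdot(p^!d-[E])-1$ points, and only this strengthened count forces $m=1$. You do note later that $q$ is a fixed contact point, but you invoke irreducibility before that extra constraint is available, so the logical order must be repaired (first $m+b\llt3$, then the case analysis, with $m=2$, $b=1$ excluded via $q$).

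The second issue is your exclusion of the irreducible conic in the $b=2$ case by ``rigidity'': you argue that a conic through $p_1,p_2$ and the two contact points of the rigid $C_-$ moves in a pencil, contradicting rigidity of the limit set. This is circular as stated, because the finiteness of $\cl(d,\undl x(0),J_0)$ is part of the conclusion, and upgrading ``positive-dimensional families of configurations do not arise as limits'' to a proof requires the gluing machinery rather than a dimension count alone. The paper's mechanism is more elementary and immediate: $\bar C$ is a connected nodal \emph{rational} curve, hence a tree of spheres, so two irreducible components $C_+$ and $C_-$ cannot be joined at two distinct nodes over $V$ (that would create a cycle in the dual graph); therefore $C_+$ must break into two components, each necessarily of class $[H]$. (Both you and the paper tacitly discard the conic tangent to $V$ at a single point, which is compatible with the tree structure but is ruled out by the dimension count for $\cl^{0,\delta_2}$.) Your over-determination argument eliminating the configuration ``line through $\{p_1,p_2\}$ plus a free line'' is sound and consistent with the paper's assertion that each component of $\bar f(C_+)$ passes through exactly one of $p_1,p_2$.
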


\begin{proof}
Example 11.4 and Lemma 14.6 in \cite{ip2004} implies that no component of $\bar C$ is entirely mapped into $V$.
In the real blow-up $\bar{X}^-\cong X_{1,0}$, $[E]^2=-1$. The adjunction formula implies that $c_1(\bar{X}^-)\cdot[E]=1$.
Suppose $\bar f_*[C_+]=a[H]$, $\bar f_*[C_-]=p^!d-b[E]$.
 Thus by considering a representative of $V$ in $\bar{X}^+$ and another in $\bar{X}^-$ respectively,
we have
$$
a=\bar f_*[C_+]\cdot[H]=(p^!d-b[E])\cdot[E]=p^!d\cdot[E]+b=b.
$$

Since $\undl x(0)\cap \bar{X}^+=\{p_1,p_2\}$, so $\bar f(C_+)$ passes through the two points $p_1, p_2$. Then $\underline{x}(0)\cap \bar{X}^-\not=\emptyset$
  implies that $a=b\glt1$ and $c_1(X)\cdot d\geq 4$. Therefore,
$\bar f(C_-)$ passes through all the $c_1(X)\cdot d-3$ points in $\undl x(0)\cap \bar{X}^-$
and realizes the class $p^!d-b[E]$ in $H_2(\bar{X}^-;\zb)$.

 Suppose  that $C_{-}$
consists of irreducible components $\{C_{-i}\}_{i=1}^m$
with $0\llt k\llt m$ irreducible components $\{C_{-i}\}_{i=1}^k$ such that
the restriction $\bar f|_{C_{-i}}$, $i=1,...,k$, is non-simple
which factors through a non-trivial ramified covering of degree $\delta_i\glt2$
of a simple map $f_i:\pb^1\to \bar{X}^-$.
Assume that $(f_i)_*[\pb^1]=d_i$, $i=1,...,k$, and $\bar f_*[C_{-j}]=d_j$, $j=k+1,...,m$.
Then $\sum_{i=1}^k\delta_id_i+\sum_{j=k+1}^{m} d_j=p^!d-b[E]$.
\begin{eqnarray*}
& & c_1(\bar{X}^-)\cdot(\sum_{i=1}^kd_i)-k+c_1(\bar{X}^-)\cdot(\sum_{j=k+1}^md_j)-(m-k)\\
& & \\
&  & \geq  c_1(X)\cdot d -3\\
& & \\
& & = c_1(\bar{X}^-)\cdot(\sum_{i=1}^k\delta_id_i+\sum_{j=k+1}^md_j)+b-3.
\end{eqnarray*}
Therefore,
\begin{equation}\label{4-1}
\sum_{i=1}^k(1-\delta_i)c_1(\bar{X}^-)\cdot d_i
\glt m+b-3.
\end{equation}
Since $c_1(\bar{X}^-)\cdot d_i\glt0$, so we have
$$
     m+b\leq 3.
$$

First of all, we assume that $k\geq 1$. Then (\ref{4-1}) implies $m+b< 3$. Thus we have $m=b=1$. This implies
that $C_-$ has one component. Furthermore, assume that $\bar{f}|_{C_-}$ factors through a non-trivial ramified covering of degree $\delta\geq 2$
of a simple map $\bar{f}_-: \mathbb{P}^1\longrightarrow \bar{X}^-$. Then we have
\begin{equation*}
\frac{1}{\delta}c_1(\bar{X}^-)\cdot(p^!d-b[E])-1 \glt c_1(X)\cdot d-3.
\end{equation*}
Therefore,
$$
 c_1(X)\cdot d+2\delta-\delta c_1(X)\cdot d \glt b.
$$
Since $\delta\glt2$ and  $c_1(X)\cdot d\glt4$. So $b\llt0$. This is in contradiction with $b=1$.
This contradiction implies that $k=0$.

Next, we assume that  $k=0$. (\ref{4-1}) implies that we only need to consider the following two cases.

{\bf Case I}: $m=2, b=1$.

In this case, $\bar f|_{C_+}$ is constrained by $\{p_1,p_2\}$ and
$\bar f|_{C_+}\in\cl^{0,\delta_1}([H],\{p_1,p_2\},J_0)$.
Therefore $\bar f|_{C_-}$ has to pass the point of intersection of $\bar f|_{C_+}$ and $V$
which is distinct to $\undl x(0)\cap \bar X^-$.
$\bar f_{C_-}$ will pass $c_1(X)\cdot d-2 = c_1(\bar{X})\cdot (p^!d- [E]) -1 $ distinct points
which implies that $\bar f_{C_-}$ is irreducible. This is in contradiction with that $C_-$ has two components.
This  contradiction implies that this case is impossible.

{\bf Case II}: $m=1, b=1$ or $2$.

If $b=a=1$, $C_+$ must have exactly $1$ component
and its image under $\bar{f}$ realizes the class $[H]$.
$\bar f|_{C_+}$ is simple because of Lemma $\ref{lem:simap}$.
 $\bar f|_{C_+}\in\cl^{0,\delta_1}([H],\{p_1,p_2\},J_0)$.
By the positivity of intersection, there only has one curve in $\cl^{0,\delta_1}([H],\{p_1,p_2\},J_0)$
which is an embedded simple curve.
Denote by $q$   the point of intersection of $\bar f|_{C_+}$ and $V$.
The point $q$   depends only on $\cl^{0,\delta_1}([H],\{p_1,p_2\},J_0)$.
Therefore  $\bar f|_{C_-}$ has to pass $\undl x(0)\cap \bar{X}^-\sqcup\{q\}$, and 
$\bar f|_{C_-}$ is an element of $\cl^{\delta_1,0}(p^!d-[E],\undl x(0)\cap \bar{X}^-\sqcup\{q\},J_0)$.

If $b=a=2$, $\bar f|_{C_-}$ is an element of $\cl^{0,2\delta_1}(p^!d-2[E],\undl x(0)\cap \bar{X}^-,J_0)$.
$\bar f|_{C_-}$ intersects $E$ transversely in $2$ distinct points.
Note that the curve $\bar C$ is rationl and any component of $\bar{f}(C_+)$  intersects $E$ in $\bar{X}^+$,
so   $C_{+}$ has exactly $2$ irreducible components.
Furthermore each component of $\bar{f}(C_+)$  realizes $[H]$ and passes through one point of $\{p_1,p_2\}$.

  The rest of the Proposition can be proved similar to Proposition $\ref{prop:sum11}$.
We omit it here.
\end{proof}

\begin{prop}\label{prop:wall2}
Let $X_\rb$ be a compact real symplectic $4$-manifold,
$d\in H_2(X;\zb)$ such that  $c_1(X)\cdot d \glt 4 $
and $\tau_*d=-d$.
Denote by $p: X_{1,0}\to X$ the projection of the real symplectic blow-up of $X$ at   $x\in\rb X$.
Then if $s\glt1$, we have
 
\begin{eqnarray}\label{eq:wall2-1}
W_{X_\rb}(d,s-1)& = & \sum_{C_1\in\rb\cl^{\delta_1^r,0}(p^!d-[E],\undl x(0)\cap \bar{X}^-\sqcup\{q\},J_0)}(-1)^{m_{X_{1,0}}(C_1)}\nonumber\\
 & &\\
& &+2\sum_{C_2\in\rb\cl^{0,2\delta_1^r}(p^!d-2[E],\undl x(0)\cap \bar{X}^-,J_0)}(-1)^{m_{X_{1,0}}(C_2)}, \nonumber
\end{eqnarray}
\begin{eqnarray}\label{eq:wall2-2}
W_{X_\rb}(d,s) & = & \sum_{C_1\in\rb\cl^{\delta_1^r,0}(p^!d-[E],\undl x(0)\cap \bar{X}^-\sqcup\{q\},J_0)}(-1)^{m_{X_{1,0}}(C_1)}\nonumber\\
& & \\
&  & -2\sum_{C_2\in\rb\cl^{0,2\delta_1^c}(p^!d-2[E],\undl x(0)\cap \bar{X}^-,J_0)}(-1)^{m_{X_{1,0}}(C_2)},\nonumber
\end{eqnarray}
where $E$ is the exceptional divisor and $q$ is some particular point in $V$.
 
\end{prop}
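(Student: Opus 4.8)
The plan is to compute both numbers $W_{X_\rb}(d,s-1)$ and $W_{X_\rb}(d,s)$ through the \emph{same} degeneration used in subsection \ref{subsec:bpform}, namely the real symplectic sum $\pi:\zl\to\Delta$ with central fibre $X_\sharp=\bar X^+\cup_V\bar X^-$, $\bar X^+\cong\pb^2$, $\bar X^-\cong X_{1,0}$, and to extract the difference from the reality type of the two constraint points placed on the $\pb^2$-side. Since the invariant is independent of $J$ and of the real configuration, I would choose the real sections $\undl x(\lambda)$ so that exactly two of the $c_1(X)\cdot d-1$ marked points land in $\bar X^+$ at $\lambda=0$; the hypothesis $c_1(X)\cdot d\glt4$ guarantees $\undl x(0)\cap\bar X^-\neq\emptyset$, and $s\glt1$ gives the freedom to prescribe the reality type of the pair. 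For $W_{X_\rb}(d,s-1)$ I would take $\{p_1,p_2\}\subset\rb\pb^2$ a pair of real points, and for $W_{X_\rb}(d,s)$ I would take $p_2=\tau(p_1)$ a $\tau$-conjugated pair. In both cases the correspondence argument of Proposition \ref{prop:welrep11} yields $W_{X_\rb}(d,s')=\sum_{\bar f\in\rb\cl(d,\undl x(0),J_0)}(-1)^{m_{X_\sharp}(\bar f(\bar C))}$, so the whole computation reduces to enumerating the real limit stable maps and their masses.

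Next I would invoke Proposition \ref{prop:wall1}, which splits each real limit $\bar f$ into two geometric types: in type (1), $\bar f(C_+)$ is a single line through $\{p_1,p_2\}$ and $\bar f|_{C_-}\in\cl^{\delta_1,0}(p^!d-[E],\undl x(0)\cap\bar X^-\sqcup\{q\},J_0)$; in type (2), $\bar f(C_+)$ is a pair of lines each through one of $p_1,p_2$, and $\bar f|_{C_-}\in\cl^{0,2\delta_1}(p^!d-2[E],\undl x(0)\cap\bar X^-,J_0)$. The key observation is that in type (1) the line through $\{p_1,p_2\}$ is real for \emph{both} choices of the pair (a real line joining two real points, respectively the unique real line through a conjugated pair), it meets $V\cong H$ in a single real point $q$ depending only on $p_1,p_2$ and $J_0$, and $C_+$ carries no node. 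Hence type (1) contributes the identical sum $\sum_{C_1\in\rb\cl^{\delta_1^r,0}(p^!d-[E],\undl x(0)\cap\bar X^-\sqcup\{q\},J_0)}(-1)^{m_{X_{1,0}}(C_1)}$ to both invariants, which is the common first term of \eqref{eq:wall2-1} and \eqref{eq:wall2-2}.

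The heart of the proof is the type-(2) analysis, where the two reality types diverge. For $W_{X_\rb}(d,s-1)$ both lines $\ell_1,\ell_2$ are real, their contact points with $V$ are real, so $\bar f|_{C_-}$ lies in $\rb\cl^{0,2\delta_1^r}$; the interior node $\ell_1\cap\ell_2$ is the meeting of two real branches, hence non-isolated and invisible to the mass, giving $m_{X_\sharp}(\bar f)=m_{X_{1,0}}(C_-)$. For $W_{X_\rb}(d,s)$ reality instead forces $\tau(\ell_1)=\ell_2$, so the two contact points on $V$ form a conjugated pair and $\bar f|_{C_-}$ lies in $\rb\cl^{0,2\delta_1^c}$; now $\ell_1\cap\ell_2$ is a $\tau$-fixed, hence real, node that is the transverse meeting of two \emph{conjugated} branches, i.e.\ an isolated node surviving the smoothing away from $V$, so $m_{X_\sharp}(\bar f)=m_{X_{1,0}}(C_-)+1$ and it contributes the extra sign $-1$. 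In either reality type, for a fixed $C_-$ there are exactly two ways to pair $p_1,p_2$ with the two contact points of $C_-$ on $V$, and I would check that both pairings give genuinely distinct real curves; this produces the factor $2$. Assembling type (1) and type (2) then yields \eqref{eq:wall2-1} and \eqref{eq:wall2-2}.

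I expect the main obstacle to be the node bookkeeping in type (2): one must verify rigorously that the interior node $\ell_1\cap\ell_2$ persists under the smoothing $\lambda\to0$, in contrast to the nodes lying on $V$ which are smoothed exactly as at the end of the proof of Proposition \ref{prop:sum11}, and that its isolated-versus-non-isolated character is correctly dictated by whether the two branches are real or complex conjugated. A secondary technical point, needed for the factor $2$ and for the cancellation of the type-(1) terms in the difference, is to confirm that the two pairings of $\{p_1,p_2\}$ with the contact points of $C_-$ are distinct real curves in each reality type, and that the auxiliary point $q$ of type (1) is well defined and identical for the two invariants.
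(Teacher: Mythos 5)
Your proposal is correct and follows essentially the same route as the paper: the same degeneration with two marked points pushed onto the $\pb^2$-side, the same appeal to Proposition \ref{prop:wall1} to split the limits into the one-line and two-line types, and the same reality/mass bookkeeping (common type-(1) term with the real point $q$; factor $2$ from the two pairings in type (2); non-isolated versus isolated interior node according to whether the two lines are real or conjugated, accounting for the sign $+2$ versus $-2$). The obstacles you flag are exactly the points the paper's proof addresses.
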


\begin{proof}
Equip the small disc $\Delta$ with the standard complex conjugation.
From subsection $\ref{subsec:realsympcut}$,
we know one can equip the symplectic sum $\pi:\zl\to\Delta$ with a real structure $\tau_{\zl}$
which is induced by the real structures $\tau_-$, $\tau_+$
on the real symplectic cuts $\bar{X}^-$ and $\bar{X}^+$
such that the map $\pi:\zl\to\Delta$ is real.
Choose a set of real sections $\undl x:\Delta\to\zl$ such that $\underline{x}(0)\cap\bar{X}^+_{1,0}$ contains two points.
Let $\bar f:\bar C\to X_\sharp$ be a real element in $\rb\cl(d,\undl x(0),J_0)$.

Next, we will divide the proof into two cases according to the type of real configuration points.

{\bf Case I}; $\undl x(0)\cap \bar{X}^+=\{p_1,p_2\}$ and $p_1$, $p_2\in\rb X$.

From Proposition $\ref{prop:wall1}$, there are two types of the limited curve $\bar{f}$ as following.

{\bf  Type I-1}:  $C_+$ has only one component.
 
$\bar f_*[C_+]=[H]$ and $\bar f|_{C_+}\in\cl^{0,\delta_1}([H],\{p_1,p_2\},J_0)$ is an embedded simple curve.
The intersection point $q$ of $\bar{f}(C_+) $ with $V$, determined by $\cl^{0,\delta_1}([H],\\ \{p_1,p_2\},J_0)$, has to be real.
In this case, since $\bar f(C_+)$ has no self-intersection point, so $\bar f(C_+)$ has no node.
Therefore, there is only one possibility  to recover a real curve $\bar f(\bar C)$ from $\bar f|_{C_+}$
when $\bar f|_{C_-}$ is fixed. So we have
\begin{eqnarray}\label{4-2}
m_{X_\sharp}(\bar f(\bar C)) & = & m_{\bar{X}^-}(\bar f|_{\bar C_-})+m_{\bar{X}^+}(\bar f|_{C_+})\nonumber\\
 & & \\
 & = & m_{\bar{X}^-}(\bar f|_{\bar C_-}). \nonumber
\end{eqnarray}

{\bf Type I-2}: $C_+$ has exactly two irreducible components $C_{+i}$, $i=1,2$.
In this case, $\bar f_*[C_{+i}]=[H]$ and $\bar f|_{C_{+i}}$ is an embedded simple curve.
 By the positivity of intersections, $\bar f(C_{+1})$ intersects
$\bar f(C_{+2})$ at one point. This point has to be a real node of $\bar f(C_+)$
because $\bar f(C_+)$ is real.
Since $\bar f(C_{+i})$ passes $p_i\in\rb X$,
$\bar f(C_{+1})$ and $\bar f(C_{+2})$ can not be two $\tau$-conjugated components.
Therefore the real nodal point of $\bar f(C_+)$ has to be non-isolated.
Moreover, each $\bar f(C_{+i})$ intersects $V$ at a real point $q_i$ transversally
and $\bar f_{C_{+i}}\in\cl^{\delta_1^r,0}([H],\{p_i\}\sqcup\{q_i\},J_0)$.
 From Proposition $\ref{prop:wall1}$ $(2)$, we can get
the curve $C_-$ is irreducible and $\bar f|_{C_-}$ is an element of
$\cl^{0,2\delta_1}(p^!d-2[E],\undl x(0)\cap \bar{X}^-,J_0)$.
$\bar f|_{C_+}$ and $\bar f|_{C_-}$ form the limited curve $\bar f$.
We know $\bar f(C_-)$ intersects $V$ at two real non-prescribed points transversally.
Therefore $\bar f|_{C_-}\in\cl^{0,2\delta_1^r}(p^!d-2[E],\undl x(0)\cap \bar{X}^-,J_0)$.
Therefore, there are two possibilities   to recover a real curve $\bar f(\bar C)$ from $\bar f|_{C_+}$
  when $\bar f|_{C_-}$ is fixed.
  We have
  \begin{eqnarray}\label{4-3}
  m_{X_\sharp}(\bar f(\bar C)) & = & m_{\bar{X}^-}(\bar f|_{\bar C_-})+m_{\bar{X}^+}(\bar f|_{C_+})\nonumber\\
  & & \\
   & = & m_{\bar{X}^-}(\bar f|_{\bar C_-}). \nonumber
  \end{eqnarray}

By Proposition $\ref{prop:wall1}$, an element $\bar f$ of $\cl(d,\undl x(0),J_0)$ is the limit
of a unique element of $\cl(d,\undl x(\lambda),J_\lambda)$ as $\lambda$ goes to $0$. So the latter has to be real when
$\bar f$ is real and $\lambda\in\cb^*$ is small. When deforming $\bar f$, no node appears
in a neighborhood of $V\cap\bar f(\bar C)$. From the analysis of Case I,
we know the elements of $\cl(d,\undl x(0),J_0)$ have two different types.
Therefore the elements of $\rb\cl(d,\undl x(\lambda),J_\lambda)$ will degenerate into
two types: in type I-1,
  an element of $\rb\cl^{\delta_1^r,0}(p^!d-[E],\undl x(0)\cap \bar{X}^-\sqcup\{q\},J_0)$
corresponds to a unique element of the limited curve;
in type I-2, an element of
$\rb\cl^{0,2\delta_1^r}(p^!d-2[E],\undl x(0)\cap \bar{X}^-,J_0)$ corresponds
to two limited curves with the same mass. One can get formula $(\ref{eq:wall2-1})$
easily from (\ref{4-2}) and (\ref{4-3}).

{\bf Case II}:  
 $\undl x(0)\cap X^+=\{p,p'\}$ with $p$, $p'\in X\setminus\rb X$ and $\tau(p_1) = p_2$.

From Proposition $\ref{prop:wall1}$, we also obtain  two types of the limited curve $\bar f$ as following.

{\bf Type II-1}:  $C_+$ has only one component.
 
$\bar f_*[C_+]=[H]$ and $\bar f|_{C_+}\in\cl^{0,\delta_1}([H],\{p,p'\},J_0)$ is an embedded simple curve.
The $\tau$-conjugated pair $p,p'\in X\setminus\rb X$ can be chosen such that
the intersection point, determined by $\cl^{0,\delta_1}([H],\{p,p'\},J_0)$ , is also $q$.
So $\bar f|_{C_-}$ belongs to
  $\cl^{\delta^r_1,0}(p^!d-[E],\undl x(0)\cap \bar{X}^-\sqcup\{q\},J_0)$,
  and the remaining argument is the same as that in the case I.
  We omit it here. 

{\bf Type II-2}: $C_+$ has exactly two irreducible components $C_{+i}$,$i=1,2$.
 In this case, $\bar f_*[C_{+i}]=[H]$ and $\bar f|_{C_{+i}}$ is an embedded simple curve.
 By the positivity of intersections, $\bar f(C_{+1})$ intersects
$\bar f(C_{+2})$ at one real point which is a real node of $\bar f(C_+)$.
Since $\bar f(C_{+i})$ passes $p_i\in X\setminus\rb X$ with $\tau(p_1)=p_2$,
$\bar f(C_{+1})$ and $\bar f(C_{+2})$ are two $\tau$-conjugated components.
Therefore the real nodal point of $\bar f(C_+)$ has to be isolated.
Moreover, each $\bar f(C_{+i})$ intersects $V$ at a point $q_i$ transversally
with $\tau(q_1)=q_2$.
 From Proposition $\ref{prop:wall1}$ $(2)$, we can get
the curve $C_-$ is irreducible and $\bar f|_{C_-}$ is an element of
$\cl^{0,2\delta_1}(p^!d-2[E],\undl x(0)\cap \bar{X}^-,J_0)$.
$\bar f|_{C_+}$ and $\bar f|_{C_-}$ form the limited curve $\bar f$.
We know $\bar f(C_-)$ intersects $V$ at two $\tau$-conjugated non-prescribed points transversally.
Therefore $\bar f|_{C_-}\in\cl^{0,2\delta_1^c}(p^!d-2[E],\undl x(0)\cap \bar{X}^-,J_0)$.
There are two possibilities  to recover a real curve $\bar f(\bar C)$ from $\bar f|_{C_+}$
when $\bar f|_{C_-}$ is fixed.   We have
\begin{eqnarray}\label{4-4}
  m_{X_\sharp}(\bar f(\bar C)) & = & m_{\bar{X}^-}(\bar f|_{\bar C_-})+m_{\bar{X}^+}(\bar f|_{C_+})\nonumber\\
  & & \\
  & = & m_{\bar{X}^-}(\bar f|_{\bar C_-})+1. \nonumber
\end{eqnarray}

  By Proposition $\ref{prop:wall1}$, an element $\bar f$ of $\cl(d,\undl x(0),J_0)$ is the limit
of a unique element of $\cl(d,\undl x(\lambda),J_\lambda)$ as $\lambda$ goes to $0$. So the latter has to be real when
$\bar f$ is real and $\lambda\in\cb^*$ is small. When deforming $\bar f$, no node appears
in a neighborhood of $V\cap\bar f(\bar C)$. From the analysis of Case II,
we know the elements of $\cl(d,\undl x(0),J_0)$ have two different types.
Therefore the elements of $\rb\cl(d,\undl x(\lambda),J_\lambda)$ will degenerate into
two types: in type II-1,
  an element of $\rb\cl^{\delta_1^r,0}(p^!d-[E],\undl x(0)\cap \bar{X}^-\sqcup\{q\},J_0)$
corresponds to a unique element of the limited curve;
in type II-2, an element of
$\rb\cl^{0,2\delta_1^c}(p^!d-2[E],\undl x(0)\cap \bar{X}^-,J_0)$ corresponds
to two limited curves with the same mass. One can get formula $(\ref{eq:wall2-2})$
easily from (\ref{4-2}) and (\ref{4-4}).
\end{proof}

Next, perform the real symplectic cut along the exceptional divisor $E$ in $X_{1,0}$.
We can get $\overline{ X}^+_{1,0}\cong \mathbb{P}_E(\mathcal{O}(-1)\oplus\mathcal{O})$, $\overline{  X}^-_{1,0}\cong X_{1,0}$, $V$ as in subsection $\ref{subsec:bpform}$.

Let $\tilde\zl$ be the real symplectic sum of $\overline{ X}^+_{1,0}$ and $\overline{ X}^-_{1,0}$ along $V$.
Let $p^!d-2[E]\in H_2(\tilde\zl_\lambda;\zb)$, where $d\in H_2(X;\zb)$.
Choose $\tilde{\undl x}(\lambda)$
a set of $c_1(X)\cdot d-3$ real sections $\Delta\to\tilde\zl$ such that
$\tilde{\undl x}(0)\cap V=\emptyset$, $\tilde{\undl x}(0)\cap\overline{ X}^+_{1,0}=\emptyset$.
Choose an almost complex structure $\tilde J$ on $\tilde\zl$ as before.
Denote $\tilde X_\sharp$, $\cl(p^!d-2[E],\tilde{\undl x}(0),\tilde J_0)$,
 $C_*$, $*=+,-$, as  in subsection $\ref{subsec:bpform}$.

\begin{prop}\label{prop:wall3}
For a generic $\tilde J_0$, the set $\cl(p^!d-2[E],\tilde{\undl x}(0),\tilde J_0)$ is finite,
and only depends on $\tilde{\undl x}(0)$ and $\tilde J_0$.
Given an element $\bar f:\bar C\to\tilde X_\sharp$  of
$\cl(p^!d-2[E],\tilde{\undl x}(0),\tilde J_0)$,
the restriction of $\bar f$ to any component of $\bar C$ is a simple map,
and no irreducible component of $\bar C$ is entirely mapped into $V$.
Moreover, the curve $C_-$ is irreducible and $\bar f|_{C_-}$ is an element of
$\cl^{0,2\delta_1}(p^!d-2[E],\tilde{\undl x}(0)\cap \overline{ X}^-_{1,0},\tilde J_0)$.
The curve $C_+$ has two irreducible components.
Each component of $\bar{f}(C_+)$ realizes the fiber class $[F]$ in $\pb_E(\ol(-1)\oplus\ol)\to E$.
The map $\bar f$ is the limit of a unique element of
$\cl(p^!d-2[E],\tilde{\undl x}(\lambda),\tilde J_\lambda)$ as $\lambda$ goes to $0$.
\end{prop}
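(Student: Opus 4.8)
The plan is to run the same degeneration analysis used in Proposition~\ref{prop:sum12}(2), now adapted to the total class $p^!d-2[E]$ and to the configuration $\tilde{\undl x}(0)$, which lies entirely in $\overline{X}^-_{1,0}$ and consists of $c_1(X)\cdot d-3$ points. First I would invoke Example~11.4 and Lemma~14.6 of \cite{ip2004} to guarantee that no irreducible component of $\bar C$ is entirely mapped into $V$. Writing $\bar f_*[C_+]=a[F]+b[E_\infty]$ with $a,b\glt0$ and $\bar f_*[C_-]=p^!d-k[E]$ with $k\glt0$, I would extract two numerical identities from the geometry of $\overline{X}^+_{1,0}\cong\pb_E(\ol(-1)\oplus\ol)$: matching the contact orders along $V$ gives
\[
a+b=(a[F]+b[E_\infty])\cdot[E_\infty]=(p^!d-k[E])\cdot[E]=k,
\]
while intersecting with the disjoint section $E_0$ (using $[E_\infty]\cdot[E_0]=0$, $[F]\cdot[E_0]=1$ and $[E]^2=-1$) yields $a=(p^!d-2[E])\cdot[E]=2$.

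Next I would pin down $C_-$. Since $\tilde{\undl x}(0)\cap\overline{X}^+_{1,0}=\emptyset$, the curve $\bar f|_{C_-}$ must pass through all $c_1(X)\cdot d-3=c_1(\overline{X}^-_{1,0})\cdot(p^!d-2[E])-1$ points, and the same Chern-number-versus-constraint counting used in Proposition~\ref{prop:sum11} (together with $c_1(\overline{X}^-_{1,0})\cdot d_i\glt0$ on each component and the hypothesis $c_1(X)\cdot d\glt4$) forces $C_-$ to be irreducible and $\bar f|_{C_-}$ simple; the multiple-cover possibility is excluded exactly as in Proposition~\ref{prop:sum12}(1). The inequality $c_1(\overline{X}^-_{1,0})\cdot(p^!d-k[E])-1=c_1(X)\cdot d-1-k\glt c_1(X)\cdot d-3$ then gives $k\llt2$, and combined with $k=a+b\glt a=2$ this forces $k=2$ and $b=0$. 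Hence $\bar f|_{C_-}$ is an element of $\cl^{0,2\delta_1}(p^!d-2[E],\tilde{\undl x}(0)\cap\overline{X}^-_{1,0},\tilde J_0)$ and $\bar f_*[C_+]=2[F]$.

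Finally I would analyse $C_+$. Because $\bar f|_{C_-}$ lies in $\cl^{0,2\delta_1}$, for generic $\tilde J_0$ it meets $V=E$ transversally at two \emph{distinct} non-prescribed points, and by the nodal structure of \cite{ip2004} each of these is a node joining $C_-$ to a component of $C_+$ meeting $V=E_\infty$ with multiplicity one. A component $c'[F]+b'[E_\infty]$ of $C_+$ with $(c'[F]+b'[E_\infty])\cdot[E_\infty]=c'+b'=1$ and $b'\glt0$ is either $[F]$ or $[E_\infty]$, and the latter is ruled out since no component lies in $V$; thus each attaching component is a fiber, and by Lemma~\ref{lem:tp2set} it is the unique embedded fiber through the corresponding point of $E_\infty$. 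As the total class is $2[F]$ and $\bar C$ is connected and rational, $C_+$ consists of exactly these two fibers. Finiteness of the whole set then follows from the finiteness of $\cl^{0,2\delta_1}(p^!d-2[E],\cdot,\tilde J_0)$ provided by Proposition~\ref{prop:finite} (here $[V]^2=-1$ and $|\beta|=2\glt d\cdot[V]-1$), and the fact that $\bar f$ is the limit of a unique element of $\cl(p^!d-2[E],\tilde{\undl x}(\lambda),\tilde J_\lambda)$ comes from the local smoothing model $zw=\mu(\lambda)$ described at the end of the proof of Proposition~\ref{prop:sum11}. The hard part will be this last step: ruling out the degenerate realizations of the class $2[F]$ — a single irreducible curve of class $2[F]$, or a double cover of one fiber — and confirming that the two contact points produce two genuinely distinct reduced fiber components. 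This is precisely where the transversality of the two contact points, the exclusion of components inside $V$, and the simplicity of $\bar f$ on each component must be combined.
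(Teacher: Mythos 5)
Your proposal follows essentially the same route as the paper's proof: the same exclusion of components in $V$ via \cite{ip2004}, the same two intersection identities giving $a=2$ and $a+b=k$, the same constraint-counting and multiple-cover dimension argument forcing $C_-$ irreducible, simple, and $k=2$, $b=0$, and the same conclusion that the two transversal contact points of $\bar f|_{C_-}$ with $V$ force $C_+$ to consist of two fiber components, with uniqueness of the deformation coming from the local model $zw=\mu(\lambda)$. The only difference is that you spell out more explicitly the exclusion of an irreducible $2[F]$ curve or a double cover of a fiber, which the paper handles tersely by the same transversality observation; this is a presentational refinement, not a different argument.
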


\begin{proof}
As before, we know that no component of $\bar C$ is entirely mapped into $V$.
  Since $\bar f_*[\bar C]=p^!d-2[E]$, we may
suppose $\bar f_*[C_+]=a[F]+b[E_\infty]$, $a, b\geq 0$, $\bar f_*[C_-]=p^!d -k[E]$, $k\glt 0$ ,
where $F$ is a fiber of $\pb_E(\ol(-1)\oplus\ol)\to E$ with $F\cdot[E_0]=1$ and $F\cdot[E_\infty]=1$. Then
\begin{eqnarray*}
a+b=(a[F]+b[E_\infty])\cdot[E_\infty] & = & (p^!d-k[E])\cdot[E]=k,\\
& & \\
a=(a[F]+b[E_\infty])\cdot[E_0] & = & (p^!d-2[E])\cdot[E]=2.
\end{eqnarray*}
This implies $k\geq 2$.

In $\overline{ X}^-_{1,0}$, we know that $\bar f|_{C_-}$ passes through
$$
c_1(X)\cdot d-3=c_1(\overline{ X}^-_{1,0})\cdot (p^!d-2[E])-1
$$
distinct points.  The same argument as in the proof of Proposition \ref{prop:sum11} shows that $C_-$ is irreducible.

Assume that $\bar f|_{C_{-}}$ is non-simple. Then $\bar f|_{C_{-}}$
factors through a non-trivial ramified covering of degree $\delta \geq 2$
of a simple map $f_0:\pb^1\to\overline{\tilde X}^-$. Then $(f_0)_*[\pb^1]=\frac{1}{\delta}(p^!d-k[E])$. Therefore,
\begin{equation*}
\frac{1}{\delta}c_1(\overline{\tilde X}^-)\cdot(p^!d-k[E])-1   \glt  c_1(  X )\cdot d -3.
\end{equation*}
This implies
\begin{equation*}
c_1(X)\cdot d-\delta c_1(X)\cdot d+2\delta  \glt   k.
\end{equation*}
Since $\delta\glt2$, $c_1(X)\cdot d\glt4$, so we have $k\leq 0$. This is in contradiction with $k\geq 2$. Thus $\bar f|_{C_{-}}$ is simple.

On the other hand, we have
\begin{eqnarray*}
c_1(\tilde X^-)\cdot(p^!d -k[E])-1 & = & c_1( X)\cdot d -k-1\\
& \glt & c_1(  X )\cdot d -3.
\end{eqnarray*}
This implies $k\leq 2$. Therefore, we have $k=2$, $b=0$.
Since the image of $C_-$ under $\bar{f}$ intersects $V$ transversally in $2$ distinct points.
Therefore, $C_+$ has two irreducible components $C_{+i}$ such that
$\bar f_*[C_{+i}]=[F]$, $i=1,2$.

  The rest of the Proposition can be obtained by the
similar argument in the proof of Proposition $\ref{prop:sum11}$.
We omit it here.
\end{proof}

\begin{prop}\label{prop:wall4}
Let $X_\rb$ be a compact real symplectic $4$-manifold,
 $d\in H_2(X;\zb)$ such that $c_1(X)\cdot d\glt4$ and $\tau_*d=-d$.
Denote by $p: X_{1,0}\to X$ the projection of the real symplectic blow-up of $X$ at   $x\in\rb X$.
Then if $s\glt1$,
\begin{eqnarray*}
W_{X_{1,0}}(p^!(d)-2[E],s-1)
& = & \sum_{C_-\in\rb\cl^{0,2\delta_1^r}(p^!d-2[E],\tilde{\undl x}_3(0)\cap\tilde X^-,\tilde J_0)}(-1)^{m_{X_{1,0}}(C_-)}\\
& & \\
& & +\sum_{C_-\in\rb\cl^{0,2\delta_1^c}(p^!d-2[E],\tilde{\undl x}_3(0)\cap\tilde X^-,\tilde J_0)}(-1)^{m_{X_{1,0}}(C_-)},
\end{eqnarray*}
where $E$ is the exceptional divisor.
\end{prop}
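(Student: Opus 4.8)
The plan is to run the same degeneration argument used to prove Proposition \ref{prop:welrep11} and Proposition \ref{prop:welrep12}, now based on the structural result Proposition \ref{prop:wall3}. First I would equip the real symplectic sum $\pi:\tilde\zl\to\Delta$ with the real structure induced by the real structures on $\overline{X}^+_{1,0}$ and $\overline{X}^-_{1,0}$ (subsection \ref{subsec:realsympcut}), choose the real sections $\tilde{\undl x}(\lambda)$ so that all $c_1(X)\cdot d-3$ constraint points land on the $-$ side at $\lambda=0$, and pick a generic real $\tilde J$. Since $\tilde\zl_\lambda\cong X_{1,0}$ for $\lambda\neq0$, and the class $p^!d-2[E]$ satisfies $c_1(X_{1,0})\cdot(p^!d-2[E])=c_1(X)\cdot d-2>0$ together with $\tau_*(p^!d-2[E])=-(p^!d-2[E])$ (using $\tau_*[E]=-[E]$), the deformation invariance of the Welschinger invariant gives
\[
W_{X_{1,0}}(p^!d-2[E],s-1)=\sum_{f_\lambda\in\rb\cl(p^!d-2[E],\tilde{\undl x}(\lambda),\tilde J_\lambda)}(-1)^{m_{X_{1,0}}(f_\lambda)}
\]
for small $\lambda\in\cb^*$, and it remains only to evaluate the right-hand side by letting $\lambda\to0$.

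Next I would analyze the real limit curves $\bar f:\bar C\to\tilde X_\sharp$. By Proposition \ref{prop:wall3}, $C_-$ is irreducible with $\bar f|_{C_-}\in\cl^{0,2\delta_1}(p^!d-2[E],\tilde{\undl x}(0)\cap\overline{X}^-_{1,0},\tilde J_0)$, while $C_+$ consists of exactly two components, each mapped to a fiber class $[F]$ of $\pb_E(\ol(-1)\oplus\ol)\to E$. The two contact points of $\bar f(C_-)$ with $V$ are not prescribed, so reality of $\bar f$ forces them to be either two real points of $V$ or a single $\tau$-conjugated pair; these alternatives label $\bar f|_{C_-}$ as an element of $\rb\cl^{0,2\delta_1^r}$ or of $\rb\cl^{0,2\delta_1^c}$ respectively. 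The crucial point, which distinguishes this computation from Proposition \ref{prop:wall2}, is that here there are \emph{no} prescribed points on $\overline{X}^+_{1,0}$: by Lemma \ref{lem:tp2set} the fiber through each contact point is the unique element of $\cl^{\delta_1,0}([F],\{q_i\},\tilde J_0)$, so $C_+$ is determined by $C_-$ with no matching ambiguity, and hence no factor of two appears in front of either sum.

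Then I would compare masses. Since distinct fibers of $\pb_E(\ol(-1)\oplus\ol)\to E$ are disjoint ($[F]^2=0$), the two components of $C_+$ never meet, so $\bar f(C_+)$ contributes no node in either the real or the conjugated case; consequently
\[
m_{\tilde X_\sharp}(\bar f(\bar C))=m_{\overline{X}^-_{1,0}}(\bar f|_{C_-})=m_{X_{1,0}}(\bar f|_{C_-}).
\]
This is the second essential difference from Proposition \ref{prop:wall2}, where the two lines in $\pb^2$ meet in a node whose isolated/non-isolated type shifts the mass; here no such shift occurs, which is why both sums enter with identical sign weighting. Finally, exactly as at the end of the proof of Proposition \ref{prop:sum11}, the order of contact with $V$ being $1$ at each node of $\bar C$ implies that each real $\bar f$ is the limit of a unique $f_\lambda\in\rb\cl(p^!d-2[E],\tilde{\undl x}(\lambda),\tilde J_\lambda)$. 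Combining the one-to-one correspondence $f_\lambda\leftrightarrow\bar f\leftrightarrow\bar f|_{C_-}$ with the mass identity yields the asserted formula.

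The main obstacle I expect is not any single estimate but the bookkeeping needed to confirm the two departures from Proposition \ref{prop:wall2} simultaneously: that the absence of $+$-side constraints removes the combinatorial factor of two, and that the disjointness of the fibers removes the mass correction present in the line configuration. Verifying that the $\tau$-conjugated pair of fibers genuinely glues to a real limit curve, without producing an isolated node as in Type II-2 of Proposition \ref{prop:wall2}, is the point where I would be most careful.
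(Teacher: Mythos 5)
Your proposal is correct and follows exactly the route the paper intends: the paper omits the proof of this proposition, remarking only that it is "similar to Proposition \ref{prop:welrep11}", and your argument is precisely that degeneration scheme run on the cut of $X_{1,0}$ along $E$ using Proposition \ref{prop:wall3} and Lemma \ref{lem:tp2set}. Your two explicitly flagged points --- no combinatorial factor because the $+$-side fibers are uniquely determined by the contact points, and no mass shift because two distinct curves in class $[F]$ are disjoint by positivity of intersections --- are exactly the features that distinguish this count from Proposition \ref{prop:wall2} and justify the equal weighting of the $\delta_1^r$ and $\delta_1^c$ sums.
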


\begin{rem}
The proof of Proposition $\ref{prop:wall4}$  is
similar to Proposition $\ref{prop:welrep11}$.
Proposition $\ref{prop:wall2}$ and Proposition $\ref{prop:wall4}$ imply Theorem $\ref{thm:wall}$.
\end{rem}

\subsection{Generating function}
In this subsection,
we restate our formulae in the form of generating functions. Denote by
$$
W_{X_\rb,L,F}^d(T)=\sum_{s=0}^{[\frac{c_1(X)\cdot d-1}{2}]}W_{X_\rb,L,F}(d,s)T^s\in\zb[T]
$$
the generating function of Welschinger invariants which encodes all the information of the Welschinger invariants.

Let $X_\rb$ be a compact real symplectic $4$-manifold.
If $\rb X$ is disconnected, the previous formulae are still true,
and can be proved in the same method as Theorem $\ref{thm:wall}$ and Corollary $\ref{cor:bp}$.
Suppose $\undl{x}'\subset X$ is a real set consisting of $r'$ points in $L$ and
$s'$ pairs of $\tau$-conjugated points in $X$ with $r'+2s'\llt c_1(X)\cdot d-1$.
We denote the connected component of $\rb X_{r',s'}$ corresponding to $L$ by $\tilde L$.
If there is only one blown-up real  point in $L$, $\tilde L=L\sharp\rb P^2$.
We assume $F$ has a $\tau$-invariant compact representative $\fl\subset X\setminus\undl{x}'$,
and denote $\tilde F=p^!F$.
Denote by $p:X_{r',s'}\to X$ the projection of the real symplectic blow-up of $X$ at $\undl{x}'$.
Then
\begin{eqnarray*}
& & W_{X_\rb,L,F}^d(T)=W_{X_{r',s'},\tilde L,\tilde F}^{p^!d}(T),\\
& & \\
 & & W_{X_\rb,L,F}^d(T)-W_{X_\rb,L,F}(d,0)-...-W_{X_{\rb},L,F}(d,s'-1)T^{s'-1}\\
& & =W_{X_{r',s'},\tilde L,\tilde F}^{p^!d-\sum_{i=1}^{r'}[E_i]-\sum_{j=1}^{s'}([E_j']+[E_j''])}(T)T^{s'},\\
& & \\
& & W_{X_\rb,L,F}^d(T)T=W_{X_\rb,L,F}^d(T)-W_{X_\rb,L,F}(d,0)+2W_{X_{1,0},\tilde L,\tilde F}^{p^!d-2[E]}(T)T,
\end{eqnarray*}
where $E_i$, $E_j'$, $E_j''$ denote the exceptional divisors
corresponding to the real set $\undl x'$ respectively.

\section{Real enumeration}

In this section we will give some applications of the blow-up formula of Welschinger
invariants.

\subsection{Blow-up of $\mathbb{C}P^2$}
Let $\mathbb{C}P^2_{r,s}$ denote the blow-up of $\mathbb{C}P^2$ at $r$ real points
and $s$ pairs of conjugated points.
The projective plane with the standard symplectic structure and complex conjugate
is a real symplectic manifold. \cite{abl2011} proved a recursive formula of
Welschinger invariants in the projective plane.
   Using the results of \cite{abl2011} and the blow-up formula of Welschinger invariants
(Corollary $\ref{cor:bp}$), we can compute the invariants of $\mathbb{C}P^2_{r,s}$.

\begin{center}

\begin{tabular}[c]{|c|l|l|l|l|l|}
\hline
s & 0 & 1 & 2& 3& 4\\
\hline
$W(c_1(X),0)$ & 8  & 6  &  4 & 2 & 0 \\
\hline
$W(c_1(X),1)$ & 6  &  4 & 2 & 0 & -\\
\hline
$W(c_1(X),2)$ &  4 & 2  & 0 & -  & -\\
\hline
$W(c_1(X),3)$ & 2  & 0  & - & - & -\\
\hline
\end{tabular}

\vskip 0.5cm

\begin{tabular}{|c|c|c|c|}
  \hline
    & $W([H],0)$ & $W(2[H],0)$  & $W(4[H],0)$ \\
  \hline
  $X=\mathbb{C}P^2_{3,0}$ & 1 & 1  & 240 \\
  \hline
  $X=\mathbb{C}P^2_{1,1}$ & 1 & 1  & 144 \\
  \hline
\end{tabular}

\vskip 0.5cm

{Table 1. Welschinger invariants of $\mathbb{C}P^2_{r,s}$ with $r+2s\llt8$}

\end{center}

Note that the Welschinger invariants of $\mathbb{C}P^2_{r,s}$ with purely
real point constraints were computed in \cite{iks2003,iks2004,iks2005,iks2009,iks2013a,iks2013b,iks2013c}.
And the Welschinger invariants of $\mathbb{C}P^2_{r,s}$ were totally computed in \cite{hs2012}.
The invariants for $r+2s\llt3$ with arbitrary real and conjugated pairs of point constraints
were studied by \cite{bm2008}. For $6\llt r+2s\llt8$,
$W_{\mathbb{C}P^2_{r,s}}(d,s')$ were considered in \cite{bru2015} with any point constraints.

\subsection{Conic bundles and Del Pezzo surfaces of degree 2}

Recall that there are $12$ topological types of degree $2$
real Del Pezzo surfaces.
I. Itenberg, V. Kharlamov, and E. Shustin \cite{iks2013a} computed
the Welschinger invariants with purely real point constraints of all
degree $2$ real Del Pezzo surfaces.
E. Brugall\'e \cite{bru2015}, computed the Welschinger invariants with arbitrary point
constraints of real degree $2$ Del Pezzo surfaces with a non-orientable real part.
A. Horev and J. Solomon \cite{hs2012} also computed Welschinger invariants with
arbitrary point constraints of some degree $2$ Del Pezzo surfaces with a non-orientable real part.
E. Brugall\'e \cite{bru2015,brugalle2016} computed the Welschinger invariants of all
the real degree $1$ Del Pezzo surface.
Since every degree $2n-1$ Del Pezzo surface which is not the minimal Del Pezzo surface
is the blow-up of a degree $2n$ Del Pezzo surface at a real point.
We can use the blow-up formula to compute the Welschinger invariants with conjugated point
constraints in the remaining
five topological types of degree $2$ real Del Pezzo surfaces
with no non-orientable real part.

Let $\bb^{n}$ be the real conic bundle with $2n$ singular fibers
and $X^1$ be the minimal real del Pezzo surface of degree $2$.
Endow $\pb^1\times\pb^1$ with the standard real structure.
So $X^1$, $\bb^3$, $\bb^2_{0,1}$, $\bb^1_{0,2}$, $(\pb^1\times\pb^1)_{0,3}$
are the five topological types of real Del Pezzo surfaces of degree $2$
with real parts $\sqcup4S^2$ $\sqcup3S^2$, $\sqcup2S^2$, $S^2$, $S^1\times S^1$,
respectively.
The following tables are taken from \cite{bru2015,brugalle2016,iks2013a}.

\begin{center}
\begin{tabular}[c]{|c|c|c|c|c|c|}
\hline
  & $X^1$ & $\bb^3$ & $\bb^2_{0,1}$& $\bb^1_{0,2}$& $(\pb^1\times\pb^1)_{0,3}$\\
\hline
$W(2c_1(X),0)$ & 0  & 0  &  0 & 8 & 32   \\
\hline
\end{tabular}

\vskip 0.5cm
\begin{tabular}[c]{|c|c|c|c|c|c|}
\hline
  & $X^1_{1,0}$ & $\bb^3_{1,0}$ & $\bb^2_{1,1}$& $\bb^1_{1,2}$& $(\pb^1\times\pb^1)_{1,3}$\\
 \hline
$W(2c_1(X),0)$  & 18  & 10  &  6 & 6 & 10\\
 \hline
\end{tabular}
\end{center}
From Welschinger's wall-crossing formula:
$$
W_{X_\rb}(d,s-1)=W_{X_\rb}(d,s)+2W_{X_{1,0}}(p^!d-2[E],s-1),
$$
we can obtain the following values.
\begin{center}
\begin{tabular}[c]{|c|c|c|c|c|c|}
\hline
  & $X^1$ & $\bb^3$ & $\bb^2_{0,1}$& $\bb^1_{0,2}$& $(\pb^1\times\pb^1)_{0,3}$\\
\hline
$W(2c_1(X),1)$ & -36  & -20  &  -12 & -4 & 12 \\
\hline
\end{tabular}
\end{center}

\end{document}